\newtheorem*{rep@theorem}{\rep@title}
\newcommand{\newreptheorem}[2]{%
\newenvironment{rep#1}[1]{%
 \def\rep@title{#2 \ref{##1}}%
 \begin{rep@theorem}}%
 {\end{rep@theorem}}}
\newtheorem{thm}{Theorem}[section]
\newtheorem{thmx}{Theorem}
\newtheorem{corx}[thmx]{Corollary}
\newtheorem{lem}[thm]{Lemma}
\newtheorem{prop}[thm]{Proposition}
\newtheorem{cor}[thm]{Corollary}
\newtheorem*{thm*}{Theorem}
\newtheorem*{problem*}{Problem}
\newtheorem{question}[thm]{Question}
\newtheorem*{claim*}{Claim}
\theoremstyle{definition}
\newtheorem{defi}[thm]{Definition}
\newtheorem{exam}[thm]{Example}
\newtheorem{rem}[thm]{Remark}
\newenvironment{claimproof}[1]{\textit{Proof of Claim. }}{\hfill $\blacksquare$}
\newcommand{\abs}[1]{\lvert#1\rvert}
\newcommand{\N}{\mathbb{N}}
\newcommand{\Z}{\mathbb{Z}}
\newcommand{\F}{\mathbb{F}}
\renewcommand{\epsilon}{\varepsilon}
\renewcommand{\phi}{\varphi}
\renewcommand{\tilde}{\widetilde}
\renewcommand{\hat}{\widehat}
\renewcommand{\bar}{\overline}
\newcommand{\ol}{\overline}
\DeclareMathOperator{\supp}{supp}
\DeclareFontFamily{U}{mathx}{\hyphenchar\font45}
\DeclareFontShape{U}{mathx}{m}{n}{
      <5> <6> <7> <8> <9> <10>
      <10.95> <12> <14.4> <17.28> <20.74> <24.88>
      mathx10
      }{}
\DeclareSymbolFont{mathx}{U}{mathx}{m}{n}
\DeclareMathAccent{\widecheck}{0}{mathx}{"71}
\DeclareMathAccent{\wideparen}{0}{mathx}{"75}
\numberwithin{equation}{section}
\begin{document}
\selectlanguage{english} 
\date{\today}
\thanks{The first and third authors were partially supported by the DGI-MINECO and European Regional Development Fund, jointly, through the grant MTM2017-83487-P. Also, they acknowledge support from the Generalitat de Catalunya through the grant 2017-SGR-1725. The second author was partially supported by Deutsche Forschungsgemeinschaft (SFB 878) and the Alexander von Humboldt Foundation. The third author is also supported by Beatriu de Pinos programme (BP-2017-0079). The fourth author has received funding from the European Research Council (ERC) under the European Union's Horizon 2020 research and innovation programme (grant agreement no. 677120-INDEX)}

\subjclass[2010]{Primary 22A22, Secondary 06F05, 37B05, 20F65, 51F30}
\keywords{Almost finite groupoids, Amenability, dynamical comparison, Type semigroup.}

\title{The type semigroup, comparison and almost finiteness for ample groupoids}
\author{Pere Ara}
\address{Pere Ara: Departament de Matemàtiques, Universitat Autònoma de Barcelona, 01893 Cerdanyola del Vallès (Barcelona), Spain, and}
\address{Centre de Recerca Matemàtica, Edifici C, Campus de Bellaterra, 01893 Cerdanyola del Vallès (Barcelona), Spain}
\email{para@mat.uab.cat}

\author{Christian Bönicke}
\address{Christian Bönicke: School of Mathematics and Statistics, University of Glasgow, University Gardens, Glasgow, G12 8QQ, UK}
\email{christian.bonicke@glasgow.ac.uk}

\author{Joan Bosa}
\address{Joan Bosa: Departament de Matemàtiques, Universitat Autònoma de Barcelona, 01893 Bellaterra (Barcelona), Spain and BGSMath}
\email{jbosa@mat.uab.cat}

\author{Kang Li}
\address{Kang Li: Institute of Mathematics of the Polish Academy of Sciences, \'{S}niadeckich 8, 00-656 Warsaw, Poland}
\email{kli@impan.pl}

\begin{abstract} We prove that a minimal second countable ample groupoid has dynamical comparison if and only if its type semigroup is almost unperforated. Moreover, we investigate to what extent a not necessarily minimal almost finite groupoid
	has an almost unperforated type semigroup. Finally, we build a bridge between coarse geometry and topological dynamics by characterizing almost finiteness of the coarse groupoid in terms of a new coarsely invariant property for metric spaces, which might be of independent interest in coarse geometry. As a consequence, we are able to construct new examples of almost finite principal groupoids lacking other desirable properties, such as amenability or even a-T-menability. This behaviour is in stark contrast to the case of principal transformation groupoids associated to group actions. 
\end{abstract}
\date{\today}
\maketitle
\section*{Introduction}
Semigroups of equidecomposability types have been of interest ever since Tarski's seminal work on the dichotomy between amenability and paradoxicality for discrete groups. Recently, such a type semigroup has been introduced as a new invariant for ample groupoids by the second and fourth author in \cite{boenicke_li_2018} and independently in \cite{rainone_sims}. In this very general framework, the type semigroup has attracted significant interest for both the role it plays in the study of finitely generated conical refinement monoids \cite{ABPS19}, as well as its connection to the structure theory of the associated reduced groupoid $\mathrm{C}^*$-algebra.
In particular, the following dichotomy result was proved in \cite{boenicke_li_2018, rainone_sims}: If the type semigroup $S(G)$ of a minimal topologically principal ample groupoid $G$ with compact unit space is almost unperforated, then its reduced groupoid $C^*$-algebra $C_r^*(G)$ is a simple $C^*$-algebra, which is either stably finite or strongly purely infinite.




Consequently, it is a natural question to ask for conditions under which the type semigroup is almost unperforated. This is indeed the situation for all the monoids described in \cite{ABPS19}.
However, one can also build groupoids whose type semigroup is not almost unperforated via the (usually non-amenable) groupoids associated to the separated graphs defined in \cite{AE14}.

The first main result of this article is a dynamical analogue of a celebrated result by R\o rdam in \cite{MR2106263} on the equivalence between strict comparison and almost unperforation of the Cuntz semigroup for unital simple separable exact $\mathrm{C}^*$-algebras:
\begin{thmx}\label{Thm:A}
	Let $G$ be a second countable minimal ample groupoid. Then $G$ has dynamical comparison if and only if its type semigroup $S(G)$ is almost unperforated.
\end{thmx}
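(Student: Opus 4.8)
The plan is to run both implications through the standard dictionary identifying states on the type semigroup $S(G)$ with $G$-invariant probability measures on the unit space $X=G^{(0)}$. First I would fix a nonempty compact open $U\subseteq X$; by minimality every compact open subset of $X$ is dominated in $S(G)$ by a multiple of $[U]$, so $[U]$ is an order unit and the set $K$ of states $s$ on $S(G)$ with $s([U])=1$ is a weak-$*$ compact convex set. I would then record the correspondence: every such state restricts to a finitely additive, $G$-invariant, normalized set function on the compact open subsets of $X$, and conversely each invariant (suitably normalized) Radon measure $\mu$ induces a state $s_\mu$ with $s_\mu([A])=\mu(A)$, giving an affine homeomorphism between $K$ and the simplex $M(G)$ of invariant measures. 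With this in hand, the defining condition of dynamical comparison, ``$\mu(A)<\mu(B)$ for all $\mu\in M(G)$ implies $[A]\le[B]$'', becomes a statement purely about states of $S(G)$.

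For dynamical comparison $\Rightarrow$ almost unperforation I would take $x,y\in S(G)$ with $(n+1)x\le ny$ and prove $x\le y$. If $y=0$, conicality of $S(G)$ forces $x=0$. If $y\ne 0$, write $y=[B]$ and $x=[A]$ for compact open sets (a general element is the class of a compact open subset of $X\times\N$, which causes no trouble), with $B$ nonempty. The crucial point is that minimality guarantees that every invariant measure has full support, so $\mu(B)>0$ for all $\mu\in M(G)$; applying an arbitrary state to $(n+1)x\le ny$ then gives $\mu(A)\le\frac{n}{n+1}\mu(B)<\mu(B)$ for every $\mu$, and dynamical comparison yields $[A]\le[B]$, i.e.\ $x\le y$. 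This is the routine direction, and it is exactly here that minimality, through full support of invariant measures, is indispensable.

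The substantial direction is almost unperforation $\Rightarrow$ dynamical comparison. Suppose $A,B$ are nonempty compact open with $\mu(A)<\mu(B)$ for all $\mu\in M(G)$, and set $u=[A]$, $v=[B]$. By minimality $v$ is an order unit, so restricting to states with $s(v)=1$ gives a compact set on which the continuous affine function $s\mapsto s(u)$ is everywhere $<1$, whence $\rho:=\sup_s s(u)/s(v)<1$. The key technical input is a Hahn--Banach / linear-programming duality for states on preordered abelian monoids, in the spirit of Blackadar--R\o rdam's state-extension theorem, identifying $\rho$ with $\inf\{\,m/n : nu\le mv \text{ in } S(G)\,\}$; since the set of such ratios is upward closed, every rational $q>\rho$ is attained. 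Choosing a rational $m/n\in(\rho,1)$ with $n\ge 2$ gives $nu\le mv\le (n-1)v$, i.e.\ $(k+1)u\le kv$ with $k=n-1\ge 1$, and almost unperforation of $S(G)$ then delivers $u\le v$, that is $[A]\le[B]$.

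I expect the main obstacle to be precisely this duality step: establishing for the type semigroup the equality $\sup_s s(u)/s(v)=\inf\{m/n : nu\le mv\}$ and verifying the hypotheses (enough states, the order-unit property, compactness) that make the separation argument run. The measure--state correspondence, together with the finite additivity and regularity needed to pass between $S(G)$ and $M(G)$, is the supporting technical point; second countability enters to guarantee that $M(G)$ is a compact metrizable Choquet simplex, so that the compactness arguments and the identification of states with genuine Radon measures go through. Everything else---conicality, the order-unit property under minimality, and the formal manipulation of $nu\le mv$ into the form $(k+1)u\le kv$---is bookkeeping around these two cores.
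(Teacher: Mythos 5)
Your direction ``almost unperforation $\Rightarrow$ dynamical comparison'' is essentially the paper's own argument (the second half of Lemma~\ref{lem:a-unp-implies-dyn-comparison}): minimality gives $[1_A]\le m[1_B]$, the hypothesis on invariant Borel measures is transferred to \emph{all} states on $S(G)$ normalized at $[1_B]$, a Goodearl--Handelman/Blackadar--R{\o}rdam type duality (the paper cites \cite[Proposition~2.1]{OPR12}) yields $(k+1)[1_A]\le k[1_B]$ for some $k$, and almost unperforation finishes. The place where second countability enters is the one you flag, though for a slightly different reason than metrizability of $M(G)$: one needs every state on $S(G)$ (a priori only a finitely additive invariant measure on compact opens) to extend to a genuine invariant Borel measure, which is Lemma~\ref{lem:RS-forgroupoids}; without it the hypothesis of dynamical comparison, which quantifies over Borel measures, would not control all states. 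So this half is sound, and contrary to your expectation it is \emph{not} where the real difficulty lies.

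The genuine gap is in the direction ``dynamical comparison $\Rightarrow$ almost unperforation,'' hidden in your parenthetical ``a general element is the class of a compact open subset of $X\times\N$, which causes no trouble.'' It causes exactly the trouble to which the bulk of the paper's proof is devoted. Dynamical comparison, as defined, only concludes $[1_A]\le[1_B]$ for compact open subsets $A,B$ of $G^{(0)}$ itself, whereas a general pair $x,y\in S(G)$ with $(n+1)x\le ny$ is represented by compact open subsets of $(G^m)^{(0)}=G^{(0)}\times\{1,\dots,m\}$; there is no formal reduction to subsets of $G^{(0)}$ (e.g.\ $2[1_{G^{(0)}}]$ is not the class of any subset of $G^{(0)}$ once an invariant probability measure exists). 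So your argument actually requires dynamical comparison for the amplified groupoids $G^m=G\times\mathcal{R}_m$, i.e.\ \emph{stable} dynamical comparison. Whether plain comparison implies stable comparison is precisely the paper's Question~\ref{Question}, and answering it affirmatively in the minimal case is the technical core of Theorem~\ref{Thm:A}: Proposition~\ref{Lem:Weak comparison}, whose proof is a nontrivial exhaustion argument (a countable family of bisections producing matched pieces $A_n\sim B_n$, ergodic measures to show the remainder is null, Dini's theorem to get uniformity over $M(G)$, and the dimension-reduction Lemma~\ref{Lem:Dimension Reduction} to compare a multi-level set against a single level $D_1\times\{1\}$). Your proposal contains no substitute for this step, so as written that implication is unproven. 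A lesser omission: you tacitly assume $G^{(0)}$ is compact throughout; the paper reduces the general minimal case to this one by restricting to a compact open $K\subseteq G^{(0)}$, checking that $G|_K$ inherits dynamical comparison, and using $S(G|_K)\cong S(G)$.
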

In fact, we can even show that the type semigroup $S(G)$ is almost unperforated for a \emph{$\sigma$-compact} minimal ample groupoid $G$ which has dynamical comparison. As a very special case we have:
\begin{corx}
Let $\Gamma$ be a countable discrete group acting minimally on a locally compact second countable totally disconnected space $X$. Then the transformation group $X\rtimes \Gamma$ has dynamical comparison if and only if its type semigroup $S(X\rtimes \Gamma)$ is almost unperforated.
\end{corx}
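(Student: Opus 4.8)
The plan is to deduce the corollary directly from Theorem~\ref{Thm:A} by realizing the transformation group $X\rtimes\Gamma$ as a transformation groupoid and checking that it meets all the hypotheses of that theorem. Writing $G=X\rtimes\Gamma$ for the transformation groupoid, whose arrow space is $X\times\Gamma$ and whose unit space $G^{(0)}$ is identified with $X$, I would verify in turn that $G$ is a second countable minimal ample groupoid, and then observe that both the type semigroup and the notion of dynamical comparison appearing in the statement are exactly the groupoid-theoretic ones attached to $G$.

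For ampleness, note first that $G=X\times\Gamma$ is locally compact Hausdorff because $X$ is and $\Gamma$ is discrete. Since $\Gamma$ is discrete the arrow space decomposes as the disjoint union $G=\bigsqcup_{g\in\Gamma}X\times\{g\}$, and on each sheet $X\times\{g\}$ both the range and source maps restrict to homeomorphisms onto $X$; hence $G$ is étale. As $X$ is totally disconnected, locally compact and Hausdorff, it admits a basis of compact open sets, and the sets $K\times\{g\}$ with $K$ compact open in $X$ and $g\in\Gamma$ then form a basis of compact open bisections for $G$, so $G$ is ample. Second countability is immediate, since $X$ is second countable and $\Gamma$, being countable and discrete, is second countable, whence $X\times\Gamma$ is second countable. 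For minimality, recall that the $G$-orbit of a point $x\in G^{(0)}=X$ equals its $\Gamma$-orbit $\Gamma x$; hence the closed $G$-invariant subsets of $X$ are precisely the closed $\Gamma$-invariant subsets, and $G$ is minimal exactly when the action $\Gamma\acts X$ is minimal, which holds by hypothesis.

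With the hypotheses of Theorem~\ref{Thm:A} thus in place, it remains only to match the data: the type semigroup $S(X\rtimes\Gamma)$ is by definition the type semigroup $S(G)$ of the transformation groupoid, and the dynamical comparison of the action $\Gamma\acts X$ coincides with dynamical comparison of $G$, since the subequivalence relation on compact open subsets of $X$ used to define the former is precisely the one induced by the compact open bisections of $G$. The corollary then follows at once by applying Theorem~\ref{Thm:A} to $G$. The only genuinely non-formal point is this last identification of the two notions of comparison (and of the type semigroups); once it is recorded there is no further obstacle, and in particular no compactness of $X$ is needed, as Theorem~\ref{Thm:A} already dispenses with it.
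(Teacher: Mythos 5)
Your proposal is correct and is exactly how the paper obtains this corollary: the paper gives no separate argument, presenting it as an immediate special case of Theorem~\ref{Thm:A}, since the transformation groupoid $X\rtimes\Gamma$ of a minimal action of a countable discrete group on a locally compact second countable totally disconnected space is a second countable minimal ample groupoid, and the type semigroup and dynamical comparison in the statement are the groupoid-theoretic notions applied to it. Your verification of ampleness, second countability, and the identification of $G$-invariant sets with $\Gamma$-invariant sets (hence of the two notions of minimality) fills in precisely the routine checks the paper leaves implicit.
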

The novelty here lies in a rather elementary approach, which allows us to drop any freeness or amenability assumptions that were crucial in previous attempts to prove such a result for transformation groups (see \cite{1710.00393} and \cite[Corollary~6.3]{Ma18}).
A great range of examples has been constructed in \cite{1712.05129}, where the authors prove that every action of a countable group with local subexponential growth on a zero dimensional compact metric space has dynamical comparison.

We then study dynamical comparison and almost unperforation of the type semigroup in the context of other important structural properties of the groupoid. In contrast to the above result, we do not limit ourselves to the minimal case and investigate two different situations: In the infinite case, i.e. when there are no non-trivial invariant measures on the unit space, we show dynamical comparison is equivalent to pure infiniteness of the groupoid, extending earlier results of Ma \cite{Ma18} (see Section~2.1 for details).

On the other end of the spectrum, we consider almost finite (not necessarily minimal) groupoids as introduced by Matui in \cite{MR2876963}. Recall that in \cite{1710.00393}, David Kerr specialises to almost finite group actions and proposes that almost finiteness might play a role in topological dynamics analogous to the role $\mathcal{Z}$-stability does for simple $\mathrm{C}^*$-algebras (see \cite[Theorem~12.4]{1710.00393} and see also \cite{2002.12221,MW}).
In particular, in the setting of free minimal group actions on zero-dimensional spaces he shows that almost finiteness always implies almost unperforation of the type semigroup, which also implies dynamical comparison (see \cite[Theorem~13.3]{1710.00393}). In the subsequent work \cite{1807.04326}, Kerr and Szabó prove that a minimal free action of amenable groups on compact metrizable zero-dimensional spaces is almost finite if and only if the action has dynamical comparison.

Studying almost unperforation for the type semigroup of non-minimal almost finite group\-oids leads to new complications. 
The main obstacle is the different behaviour of almost finiteness and almost unperforation when passing to open invariant subsets of the unit space.
To circumvent this problem, we call a groupoid $G$ {\it strongly almost finite} if every restriction of $G$ by a compact open subset of the unit space is almost finite in the sense of Matui. It is worth noticing that when the groupoid is minimal and has a compact unit space, strong almost finiteness agrees with the usual almost finiteness (see Proposition \ref{Prop:MoritaInvariance}). The second main result of these notes is the following:

\begin{thmx}\label{Thm:B}
If $G$ is a strongly almost finite ample groupoid, then it has stable dynamical comparison. In particular, its type semigroup $S(G)$ is almost unperforated.
\end{thmx}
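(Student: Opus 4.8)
The plan is to establish the two assertions in turn: first that strong almost finiteness forces (stable) dynamical comparison, and then to extract almost unperforation of $S(G)$ from comparison by testing against invariant measures. Essentially all the work sits in the first implication, which I would reduce to a single amplification over a compact unit space. Indeed, any pair of elements of the stabilised type semigroup is represented by compact open sets $A,B\subseteq G^{(0)}\times\mathbb N$, and by compactness $A,B\subseteq G^{(0)}\times\{1,\dots,N\}$ with base projection contained in some compact open $K\subseteq G^{(0)}$. It then suffices to compare $A,B$ inside the restricted amplification $(G\times\mathcal R_N)|_{K\times\{1,\dots,N\}}=(G|_K)\times\mathcal R_N$. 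This is exactly where \emph{strong} almost finiteness is indispensable: $G|_K$ is almost finite by hypothesis (compact open restrictions of $G$ are almost finite), and almost finiteness is preserved under amplification by the full finite equivalence relation $\mathcal R_N$. So I have reduced matters to showing that an almost finite ample groupoid with compact open unit space has dynamical comparison.

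Over a compact unit space the space of invariant probability measures is weak-$*$ compact, and $\mu\mapsto\mu(A),\mu(B)$ are continuous because $A,B$ are clopen; hence $\mu(A)<\mu(B)$ for all invariant $\mu$ upgrades to a uniform gap $\inf_\mu(\mu(B)-\mu(A))=\delta>0$ (the case of no invariant measure is the paradoxical one, where comparison is automatic). Feeding suitable parameters into the definition of almost finiteness yields an elementary subgroupoid, equivalently a castle of F\o lner towers covering $G^{(0)}$ up to a remainder that is arbitrarily small uniformly in $\mu$, and with tower heights as large as desired. Refining the bases to clopen sets on which $A$ and $B$ have a constant level pattern ensures that all later choices can be made clopen.

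The construction of the subequivalence from this castle is the technical heart, and the \textbf{main obstacle}. Using the castle as a coordinate system I would build compact open bisections in $G$ realising $A\prec B$: the uniform gap $\delta$, together with the tallness of the towers and the smallness of the remainder, must be converted into an injection of the $A$-portion of the castle into the $B$-portion by an Ornstein--Weiss/marriage-type argument, with the F\o lner condition supplying the requisite Hall-type matching condition and the gap providing the slack to absorb both the F\o lner boundary and the uncastled remainder into $B$. The delicate point is that $\mu(A)<\mu(B)$ is only an \emph{average} statement, whereas a subequivalence demands a genuine clopen injection implemented by bisections of $G$; bridging this gap is precisely where the full force of almost finiteness (arbitrarily tall, almost invariant towers) is needed, and where one must keep every matching continuous so as to obtain honest compact open bisections. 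As the whole argument took place in the amplification, this is exactly stable dynamical comparison.

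Finally, for the "in particular", suppose $(n+1)a\le nb$ in $S(G)$. Testing against any invariant measure gives $(n+1)\mu(a)\le n\mu(b)$, so $\mu(a)<\mu(b)$ whenever $\mu(b)>0$, while $\mu(a)=\mu(b)=0$ on the invariant locus where $b$ is not seen. Splitting off that invariant open subset and applying stable dynamical comparison on its complement yields $a\le b$; hence $S(G)$ is almost unperforated.
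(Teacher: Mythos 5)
Your top-level architecture (restrict to a compact unit space, amplify, use Morita invariance of almost finiteness, then prove a measure-versus-comparison lemma) is the same as the paper's, but two steps have genuine gaps. The first is in your reduction. You restrict to a compact open $K$ containing the base projections of both $A$ and $B$, and then work on $(G|_K)\times\mathcal R_N$ under the assumption that $\nu(A)<\nu(B)$ holds for \emph{all} invariant probability measures $\nu$ on the restriction, which is what your uniform gap $\delta>0$ requires. But the hypothesis of dynamical comparison only constrains measures $\mu$ with $0<\mu(B)<\infty$, and the restriction can perfectly well carry invariant probability measures with $\nu(B)=0$; for such $\nu$ one also gets $\nu(A)=0$ (cover the compact set $A$ by finitely many sets $r(VB)$ using the orbit condition, each of which is $\nu$-null by invariance), so $\nu(B)-\nu(A)=0$ and no uniform gap exists. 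The paper circumvents this by restricting to $B$ itself rather than to a common $K$: the hypothesis $A\subseteq r(G^m B)$ is used to replace $A$ by an equivalent compact open set $D\subseteq B\times\{1,\dots,m\}$ inside $(G|_B)\times\mathcal R_m$, where every invariant probability measure gives $B\times\{1\}$ mass exactly $1/m$, so positivity is automatic. Moreover, even with the correct restriction, transferring the measure hypothesis from $G$ to the restriction requires \emph{extending} an invariant measure on $G|_B$ to an invariant Borel measure on all of $G^{(0)}$; since $G$ is not assumed second countable, this is a nontrivial Carath\'eodory-type construction which occupies the paper's entire Appendix (Proposition \ref{prop:extmeasUG}). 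Your proposal uses this step silently.

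The second gap is the step you yourself flag as the main obstacle: converting the measure inequality into $A\precsim B$ over a castle. You leave this as a sketch of an Ornstein--Weiss/Hall-matching argument, but this is exactly the content of the paper's key lemma (Lemma \ref{Lem:Comparison}), and the paper proves it by a different, complete argument: one shows by contradiction that for a suitable pair $(C,\varepsilon)$, \emph{every} $(C,\varepsilon)$-invariant elementary subgroupoid, with its castle refined so that each level is either contained in or disjoint from $A$ and $B$, has at least as many $B$-levels as $A$-levels in each tower; a failing net of towers produces averaging measures whose weak-$*$ cluster point is an invariant probability measure with $\mu(A)\geq\mu(B)$, contradicting the hypothesis. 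The per-tower injections of levels then implement $A\precsim B$ by compact open bisections. Note also that in Matui's definition an elementary subgroupoid satisfies $K^{(0)}=G^{(0)}$, so there is no ``uncastled remainder'' to absorb, and that $M(G)\neq\emptyset$ for every almost finite groupoid, so your paradoxical case never occurs. Finally, in your last paragraph no splitting of an invariant locus is needed: $(n+1)a\leq nb$ already yields the orbit condition and the strict inequality $\mu(A)<\mu(B)$ for every invariant $\mu$ with $0<\mu(B)<\infty$, which is precisely the hypothesis of dynamical comparison (this is the paper's Lemma \ref{lem:a-unp-implies-dyn-comparison}).
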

Note that Theorem \ref{Thm:B} does not require $G$ to be second countable.

In the final section, we establish a link between regularity properties in topological dynamics (which are in turn inspired by their counterparts in the structure theory of nuclear $\mathrm{C}^*$-algebras) and a new tiling property in coarse geometry. In fact, inspired by recent results on the structure of amenable groups in \cite{DHZ}, we introduce a tiling property, which is a strong version of amenability for metric spaces, which asserts that the space can be tiled by uniformly bounded F\o lner sets of arbitrary invariance (see Definition~\ref{tilings of arbitrary invariance}). In order to explain the promised connection to coarse geometry, we recall that for every discrete metric space $X$ with \emph{bounded geometry} (i.e., for any $R>0$ there is a uniform upper bound on the cardinalities of all the $R$-balls in $X$), one can construct a principal locally compact
$\sigma$-compact ample groupoid $G(X)$ with unit space $\beta X$ (see \cite[Proposition~3.2]{zbMATH01761014}). The groupoid $G(X)$ is called the \emph{coarse groupoid} of $X$ and it is \emph{not} minimal in general. It is well-known that this groupoid reflects many interesting properties in coarse geometry. For instance, $X$ has Yu's property A if and only if $G(X)$ is an amenable groupoid (see \cite[Theorem 5.3]{zbMATH01761014}).

We show that our new tiling property is invariant under coarse equivalences and provide a link to the main results of this article by proving:
\begin{thmx}[see Theorem~\ref{Thm:AlmostFiniteCoarseGroupoid}]\label{Thm:C}
	
Let $X$ be a bounded geometry metric space and $G(X)$ be its coarse groupoid. Then the following are equivalent:
	\begin{enumerate}
		\item $G(X)$ is almost finite,
		\item $X$ admits tilings of arbitrary invariance.
	\end{enumerate}
	In particular, $G(X)$ is strongly almost finite if and only if every subspace of $X$ admits tilings of arbitrary invariance.
\end{thmx}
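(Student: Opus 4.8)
The plan is to set up a precise dictionary between the groupoid structure of $G(X)$ and the coarse structure of $X$, and then to read off the equivalence as a translation of definitions. Recall that $G(X)$ is an ample principal groupoid with unit space $\beta X$, that the compact open subsets of its unit space are exactly the closures $\bar A$ of subsets $A\subseteq X$, and that its compact open bisections are precisely the closures of partial translations, i.e. bijections $t\colon A\to B$ with $A,B\subseteq X$ whose graph $\{(x,t(x)):x\in A\}$ lies in some entourage $E_R=\{(x,y):d(x,y)\le R\}$. In particular, an elementary (compact open principal) subgroupoid $\mc K\subseteq G(X)$ with $\mc K^{(0)}=\beta X$ is the closure of an equivalence relation on $X$ all of whose classes have uniformly bounded diameter and cardinality; equivalently, $\mc K$ corresponds to a partition $X=\bigsqcup_i T_i$ into uniformly bounded tiles. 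Under this correspondence the $\mc K$-orbit of $x\in X$ is its tile $T_i$, while the orbit of a boundary point $\omega\in\beta X\setminus X$ is the finite ``limit tile'' it determines.

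First I would record the key quantitative match. Almost finiteness of $G(X)$ asks that for every compact $K\subseteq G(X)$ and every $\epsilon>0$ there be an elementary subgroupoid $\mc K$ with $\mc K^{(0)}=\beta X$ which is $(K,\epsilon)$-invariant; since every compact $K$ sits inside (the closure of) a tube $E_R$, it suffices to test this on the tubes. Through the dictionary above, $(E_R,\epsilon)$-invariance of $\mc K$ says precisely that for every tile $T_i$ the $R$-neighborhood of $T_i$ exceeds $T_i$ by at most an $\epsilon$-fraction of $\abs{T_i}$, that is, that every tile is $(R,\epsilon)$-Følner. This is exactly the content of Definition~\ref{tilings of arbitrary invariance}. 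Thus the implication $(2)\Rightarrow(1)$ amounts to taking a tiling of $X$ into uniformly bounded $(R,\epsilon)$-Følner tiles, forming the ``same tile'' equivalence relation on $X$, and verifying that its closure in $G(X)$ is an elementary subgroupoid which is $(E_R,\epsilon)$-invariant; uniform boundedness guarantees compactness and openness, and the Følner estimate gives the invariance.

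For the converse $(1)\Rightarrow(2)$ I would start from an elementary subgroupoid $\mc K$ witnessing $(E_R,\epsilon)$-invariance and restrict it to the dense orbit $X\subseteq\beta X$: its orbit equivalence relation partitions $X$ into tiles, which are uniformly bounded because the finitely many bisections defining $\mc K$ have bounded width, forcing a uniform bound on orbit size. The invariance condition then reads off as the Følner estimate on these tiles, producing a tiling of arbitrary invariance. The main obstacle is precisely the passage between the condition over the compact boundary $\beta X$, where almost finiteness is quantified over all $\omega\in\beta X$, and the condition over the discrete space $X$, where the tiling lives. This is handled by observing that all the relevant data (the tower decomposition, the tile and boundary cardinalities, the invariance ratios) are locally constant functions on clopen subsets of $\beta X$, so that the estimates on the dense set $X$ and on its closure $\beta X$ determine one another; one must also check that the bound on tile size is genuinely uniform, which follows from bounded geometry together with compactness of $\mc K$.

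Finally, the ``in particular'' statement follows by applying the equivalence to subspaces. A compact open subset of $\beta X$ has the form $\bar A$ for some $A\subseteq X$, and one checks that the restriction $G(X)|_{\bar A}$ is canonically isomorphic to the coarse groupoid $G(A)$ of the subspace $A$ endowed with the induced metric, which again has bounded geometry and satisfies $\beta A=\bar A$. Hence $G(X)$ is strongly almost finite, i.e. every such restriction is almost finite, if and only if $G(A)$ is almost finite for every subspace $A\subseteq X$, which by the equivalence $(1)\Leftrightarrow(2)$ holds if and only if every subspace of $X$ admits tilings of arbitrary invariance.
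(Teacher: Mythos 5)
Your proposal is correct, and for the direction $(1)\Rightarrow(2)$, the construction of the subgroupoid $K$ as the closure of the ``same tile'' equivalence relation in $(2)\Rightarrow(1)$, and the ``in particular'' statement (via the identification $G(X)|_{\overline{A}}\cong G(A)$ for $A\subseteq X$), it coincides with the paper's proof. Where you genuinely diverge is at the crux of $(2)\Rightarrow(1)$: verifying $(C,\varepsilon)$-invariance of $K$ at the boundary points $\omega\in\beta X\setminus X$. The paper does this geometrically, using the \v{S}pakula--Willett limit-space machinery: compatible families of partial translations, the metric space $X(\omega)$, and isometries $f_y$ from \cite[Proposition~3.10]{MR3557774} which identify the orbit $K\omega$ with an honest tile $X_i$ and embed $\overline{\Delta_R}K\omega\setminus K\omega$ into $\partial_R^+(X_i)$. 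You instead propose a soft transfer argument: the relevant counting functions are locally constant on $\beta X$, so the invariance estimate propagates from the dense subset $X$ to all of $\beta X$. This works, and is arguably simpler, once two standard facts are made explicit: (i) $\overline{\Delta_R}K$ is compact open (in an \'etale Hausdorff groupoid the product of two compact open sets is compact open), and (ii) for any compact open $A\subseteq G(X)$ the fibre count $\omega\mapsto\abs{A\omega}$ is continuous, since writing $A$ as a finite disjoint union of compact open bisections $V_i$ gives $\abs{A\omega}=\sum_i 1_{s(V_i)}(\omega)$; being integer-valued, such functions are locally constant. Consequently $\omega\mapsto\abs{K\omega}$ and $\omega\mapsto\abs{\overline{\Delta_R}K\omega\setminus K\omega}=\abs{\overline{\Delta_R}K\omega}-\abs{K\omega}$ are locally constant, every $\omega$ has a neighbourhood meeting $X$ on which both are constant, and at points $x\in X$ the strict inequality $\abs{\overline{\Delta_R}Kx\setminus Kx}<\varepsilon\abs{Kx}$ is exactly the F\o lner condition for the tile of $x$; hence the inequality holds at every $\omega\in\beta X$, and $\abs{CK\omega\setminus K\omega}\le\abs{\overline{\Delta_R}K\omega\setminus K\omega}$ finishes the argument. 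Note that the paper itself invokes continuity of $\omega\mapsto\abs{K\omega}$ (via the Haar system) inside its harder argument, so your route is a natural shortcut: what you lose relative to the paper is the explicit geometric picture that boundary orbits are isometric copies of genuine tiles (which has independent interest), and what you gain is bypassing the limit-space apparatus of \cite{MR3557774} entirely.
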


In particular, we obtain from Theorem \ref{Thm:B} and Theorem~\ref{Thm:C}  that the type semigroup $S(G(X))$ is almost unperforated for any bounded geometry metric space $X$ such that every subspace of $X$ admits tilings of arbitrary invariance
(see Corollary \ref{coarse groupoid+almost unp}). Moreover, Theorem~\ref{Thm:C} can be used to provide a range of new examples of (strongly) almost finite groupoids (see Section 4 for details). In particular, the result allows us to construct groupoids that exhibit a behaviour which cannot be witnessed in the setting of transformation groupoids. Specifically, we elaborate on the subtle relationship between almost finiteness and amenability and provide new examples of principal almost finite groupoids which are non-amenable, in fact not even a-T-menable. This answers a query of Yuhei Suzuki (see \cite[Remark 3.7]{Suzuki})\footnote{Non-amenable minimal almost finite groupoids are independently constructed by Gabor Elek in \cite{1812.07511}.}.
While for the purposes of this article we only use our new tiling property for metric spaces to obtain interesting examples of groupoids, we believe that it might be of independent interest in coarse geometry as well.




 We briefly outline the contents of this paper. In section \ref{sec:Prelim}, we recall the necessary definitions concerning groupoids, their type semigroups, and their connection to groupoid homology. In the second section, we study dynamical comparison, and its relation to almost unperforation of the type semigroups. The main result obtained in this section is Theorem \ref{Thm:A}. In section \ref{sec:AF}, we focus our study on almost finite groupoids. In order to ease reading it, we have divided this part into two subsections. In the first subsection, we recall the definition of almost finiteness and establish that it is invariant under stable isomorphism. In the second subsection, we prove Theorem \ref{Thm:B}, and describe some implications on the relation between the type semigroup and the positive cone of the groupoid homology. 
As mentioned before, the groupoids in Theorem \ref{Thm:B} may not be second countable. In order to achieve this level of generality we require some technical tools concerning extensions of Borel measures, which are developed in Appendix \ref{sect:ext-measures}. We finish the main body of the paper with section \ref{Section:CoarseGeometry}, in which we introduce our new tiling property for metric spaces and prove Theorem \ref{Thm:C}. As a consequence, we provide new examples of (strongly) almost finite groupoids and particularly construct non-amenable almost finite groupoids in Corollary \ref{cor:almost finite non-amen}. Finally, we use some of the methods developed in this article to give a short and conceptual proof of a classical result by Block and Weinberger, characterizing (non-)amenability of metric spaces in terms of uniformly finite homology (see Corollary \ref{Cor:BlockWeinberger}).

\section{Preliminaries on groupoids and the type semigroup}\label{sec:Prelim}
Let us start reviewing the terminology and notation related to groupoids that we will use throughout the text.
Given a groupoid $G$ we will denote its unit space by $G^{(0)}$ and write $r,s:G\rightarrow G^{(0)}$ for the range and source maps, respectively. \textbf{Throughout the paper, all groupoids are always assumed to be equipped 
with a locally compact, Hausdorff topology making all the structure maps continuous.} A groupoid $G$ is called \textit{étale} if the range map, regarded as a map $r:G\rightarrow G$, is a local homeomorphism. It is called \textit{ample} if it is étale and the unit space $G^{(0)}$ is totally disconnected.
In that case $G$ admits a basis for its topology consisting of compact and open \textit{bisections}, i.e. compact and open subsets $V\subseteq G$ such that the restrictions of the source and range maps to $V$ are homeomorphisms onto their respective images. Note that a compact open bisection $V\subseteq G$ gives rise to a homeomorphism $\theta_V:s(V)\rightarrow r(V)$.

For two subsets $A,B\subseteq G$ we will consider their product
$$AB=\lbrace ab\in G\mid a\in A, b\in B, s(a)=r(b)\rbrace.$$
If $B=\lbrace x\rbrace$ for a single element $x\in G^{(0)}$ we will omit the braces and just write $Ax$.

For a subset $D\subseteq G^{(0)}$ we use the standard notations $G_{D}=\lbrace g\in G\mid s(g)\in D\rbrace$, and $G^D=\{ g\in G\mid r(g)\in D\}$. When $D=\{x\}$ consists of a single point, we will adopt the customary abuse of notation and just write $G_x$ instead of $G_{\{x\}}$ and similarly $G^x$ instead of $G^{\{x\}}$.

\subsubsection*{Constructions of groupoids}
Let us briefly review some constructions of groupoids that we will use frequently throughout the article.

Given a subset $D\subseteq G^{(0)}$ we can consider the subgroupoid $G|_D:=G_D\cap G^D$ called the \textit{restriction} of $G$ to $D$. If $D=\{x\}$ consists of a single point, then $G|_D$ is a group, called the isotropy group at $x\in G^{(0)}$. 
Assembling all the isotropy groups of $G$ gives rise to the \textit{isotropy} groupoid $\mathrm{Iso}(G)=\lbrace g\in G\mid s(g)=r(g)\rbrace$ of $G$.

Another construction appearing in this article is the product of two groupoids: Given groupoids $G$ and $H$, their product $G\times H$ can be equipped with a groupoid structure by multiplying and inverting elements componentwise. If $G$ and $H$ are étale (resp. ample), then their product is also étale (resp. ample).

\subsubsection*{Dynamical notions}
We say that a set $D\subseteq G^{(0)}$ is $G$\textit{-invariant} if for every $g\in G$ we have $r(g)\in D \Leftrightarrow s(g)\in D$. Note that in this case $G|_D=G^D=G_D$ so when talking about the restriction of $G$ to an invariant subset we will often just write $G_D$. Thus, the notation $G|_D$ introduced above will typically indicate the restriction to a non-invariant subset. Note that if $D$ is $G$-invariant, then so is its complement $G^{(0)}\setminus D$.
If there are no proper non-trivial closed $G$-invariant subsets of $G^{(0)}$ we say that $G$ is \textit{minimal}.

We say that $G$ is \textit{principal} if Iso$(G)=G^{(0)}$.
Finally, $G$ is called \textit{effective} if the interior of Iso$(G)$ coincides with $G^{(0)}$. This is connected with the notion of {\it topologically principal}, which means that the set of points of $G^{(0)}$ 
with trivial isotropy group is dense in $G^{(0)}$. If $G$ is second countable and effective, then $G$ is topologically principal. If $G$ is Hausdorff and topologically principal, then $G$
is effective (\cite[Proposition 3.6]{Re80}).  


\subsection{Type semigroup}

The type semigroup of an ample groupoid was introduced and studied in \cite{boenicke_li_2018,rainone_sims}. In this section, we recall its definition and study some of its basic properties. 

\begin{defi}
	Given an ample groupoid $G$, we define an equivalence relation $\sim_G$ on $C_c(G^{(0)},\Z)^+$ by declaring $f_1\sim_G f_2$ if there exist compact open bisections $W_1,\ldots,W_n$ of $G$ such that $f_1=\sum_{i=1}^n 1_{s(W_i)}$ and $f_2=\sum_{i=1}^n 1_{r(W_i)}$. We define the type semigroup associated to $G$ by $$S(G):=C_c(G^{(0)},\Z)^+/\sim_G.$$
 We will write $[f]$ for the equivalence class of a function $f\in C_c(G^{(0)},\Z)^+$, and equip $S(G)$ with the addition induced by pointwise addition in $C_c(G^{(0)},\Z)^+$. In particular, $S(G)$ contains the class of the zero function as a neutral element and can be equipped with the algebraic preorder (i.e. $x\leq y$ in $S(G)$ if and only if there exists an element $z\in S(G)$ such that $x+z=y$). 
\end{defi}

The type semigroup is clearly an isomorphism invariant for groupoids and it was shown in \cite{rainone_sims} that it is also invariant under all the various (equivalent notions) of groupoid equivalence. This observation will be important later.

Recall, that a commutative monoid $S$ is called \textit{conical}, if for all $x,y\in S$, $x+y=0$ only when $x=y=0$.
We say that $S$ is a \textit{refinement monoid} if for all $a,b,c,d\in S$ such that $a+b=c+d$ there exist $w,x,y,z\in S$ such that $a=w+x$, $b=y+z$, $c=w+y$, and $d=x+z$.
It is straightforward to verify, that $S(G)$ is always a conical refinement monoid.

An important part of the structure of a preordered monoid $S$ is the collection of its order units. Recall, that a non-zero element $u\in S$ is called an \textit{order unit}, provided that for every $x\in S$ there exists $n\in\N$ such that $x\leq nu$. We will write $S^*$ for the collection of all order units in $S$. The monoid $S$ is called \textit{simple}, provided that every non-zero element of $S$ is an order unit, in other words $S=S^*\cup\lbrace 0\rbrace$. It has already 
been observed in \cite[Lemma~5.9]{boenicke_li_2018} that the type semigroup $S(G)$ of an ample groupoid $G$ is simple, provided that $G$ is minimal. The following Lemma extends this observation by identifying all the order units. To understand the statement and its proof, recall that a set $D\subseteq G^{(0)}$ is called $G\mathit{-full}$ if $r(GD)=G^{(0)}$ (i.e. for every element $x\in G^{(0)}$ there exists a $g\in G$ with $s(g)\in D$ and $r(g)=x$).

\begin{lem}\label{Lemma:Order units}
	Let $G$ be an ample groupoid and $[f]\in S(G)$. Then $[f]$ is an order unit if and only if $supp(f)$ is $G$-full.
	In particular, $G$ is minimal if and only if $S(G)$ is simple.
\end{lem}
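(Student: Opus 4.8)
The plan is to prove both implications at the level of characteristic functions and then deduce the ``in particular'' clause. I would first record two elementary reductions. Every $g\in C_c(G^{(0)},\Z)^+$ is a finite nonnegative integer combination of characteristic functions of compact open sets; its support $E:=\{x:g(x)>0\}$ is compact open, and $1_E\le g\le k\cdot 1_E$ pointwise for $k=\max g$, whence $[1_E]\le[g]\le k[1_E]$ in $S(G)$. Applied to $g=f$ with $D:=\mathrm{supp}(f)$, this gives $[1_D]\le[f]\le k[1_D]$, so that $[f]$ is an order unit if and only if $[1_D]$ is. Thus it suffices to prove, for a compact open $D$, that $[1_D]$ is an order unit $\iff$ $D$ is $G$-full. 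Moreover, by the same reduction every element of $S(G)$ is dominated by a multiple of some $[1_E]$ with $E$ compact open, so to test whether $[1_D]$ is an order unit it is enough to dominate $[1_E]$ for arbitrary compact open $E$.

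For the direction ``full $\Rightarrow$ order unit'', fix a compact open $E$. Since $D$ is $G$-full we have $E\subseteq G^{(0)}=r(GD)$, so for each $x\in E$ there is $g_x\in G$ with $s(g_x)\in D$ and $r(g_x)=x$. Choosing a compact open bisection through $g_x$ and intersecting its source with the compact open set $D$, I obtain a compact open bisection $V_x$ with $s(V_x)\subseteq D$ and $x\in r(V_x)$. By compactness of $E$, finitely many $r(V_{x_1}),\dots,r(V_{x_m})$ cover $E$, whence $1_E\le\sum_{j=1}^m 1_{r(V_{x_j})}$ pointwise. Since each $V_{x_j}$ is a bisection, $[1_{r(V_{x_j})}]=[1_{s(V_{x_j})}]\le[1_D]$, and therefore $[1_E]\le\sum_{j=1}^m[1_{r(V_{x_j})}]\le m[1_D]$. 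As $E$ was arbitrary, $[1_D]$ is an order unit (it is nonzero since $D\neq\emptyset$).

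The substantive direction is ``not full $\Rightarrow$ not order unit'', which I would prove by contraposition and regard as the main obstacle, as it is here that the combinatorial definition of $\sim_G$ must be played against invariance. Suppose $D$ is not $G$-full and set $U:=G^{(0)}\setminus r(GD)$; this is a nonempty $G$-invariant set (the complement of the saturation of $D$), and it is closed because $r(GD)=r(s^{-1}(D))$ is open (the range map of an étale groupoid is open). Pick $x_0\in U$ and a compact open neighbourhood $B$ of $x_0$; I claim $[1_B]\not\le n[f]$ for every $n$. If instead $1_B+z\sim_G nf$ for some $z\ge0$ and some $n$ (necessarily $n\ge1$ as $B\neq\emptyset$), then there are compact open bisections $W_1,\dots,W_k$ with $\sum_i 1_{s(W_i)}=1_B+z$ and $\sum_i 1_{r(W_i)}=nf$. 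Evaluating the first equation at $x_0$ shows $x_0\in s(W_{i_0})$ for some $i_0$; let $g\in W_{i_0}$ satisfy $s(g)=x_0$ and put $y:=r(g)$. Invariance of $U$ forces $y\in U$, while the second equation gives $r(W_{i_0})\subseteq\mathrm{supp}(nf)=D\subseteq r(GD)$, so $y\in r(GD)$, contradicting $y\in U$. Hence $[1_B]$ is dominated by no multiple of $[f]$, so $[f]$ is not an order unit.

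Finally, for the ``in particular'' clause I would combine the characterization with minimality. By the main statement, $S(G)$ is simple exactly when every nonzero class, equivalently every nonempty compact open $D\subseteq G^{(0)}$, is $G$-full, i.e. $r(GD)=G^{(0)}$. If $G$ is minimal, then for nonempty compact open $D$ the closed invariant set $G^{(0)}\setminus r(GD)$ must be empty (it cannot be all of $G^{(0)}$, since $\emptyset\neq D\subseteq r(GD)$), which gives fullness. Conversely, if $G$ is not minimal there is a nonempty proper open invariant set $V$; choosing a nonempty compact open $D\subseteq V$, invariance gives $r(GD)\subseteq V\subsetneq G^{(0)}$, so $D$ is not full and $S(G)$ is not simple. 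This closes the equivalence.
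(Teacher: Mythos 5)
Your proof is correct and follows essentially the same route as the paper's: you reduce to $[1_D]$ for $D=\mathrm{supp}(f)$ via the pointwise bounds $1_D\le f\le m\,1_D$, establish that fullness implies order unit by the compactness/covering argument (which the paper delegates to \cite[Lemma~5.9]{boenicke_li_2018}), and establish the converse by unpacking the subequivalence into compact open bisections connecting a test neighbourhood to $\mathrm{supp}(f)$ --- the paper reads off reachability directly from these bisections, while you argue by contraposition via invariance of the complement of $r(G\,\mathrm{supp}(f))$, which is the same mechanism. You additionally spell out the ``in particular'' clause about minimality, which the paper leaves implicit.
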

\begin{proof}
	Suppose $[f]\in S(G)$ is an order unit. Let $x\in G^{(0)}$ and $K$ be a compact open set containing $x$. Then there exists some $n\in\N$ such that $[1_{K}]\leq n[f]$. Let $m\in \Z^+$ be the maximal value attained by $f$. Then clearly $[1_{K}]\leq [nm1_{supp(f)}]$ and hence there exist compact open bisections $V_1,\ldots, V_k$ such that
	$K=\bigsqcup_{i=1}^k r(V_i)$ and $supp(f)\supseteq \bigcup_{i=1}^k s(V_i).$ It follows that $r(Gsupp(f))\supseteq K$ and we are done.
	For the converse we may proceed as in the proof of \cite[Lemma~5.9]{boenicke_li_2018}.
\end{proof}

Let us also identify the order ideals of the type semigroup.
Recall, that an \textit{order ideal} of a monoid $S$ is a submonoid $I$ such that for all $x,y\in S$, we have $x+y\in I$ if and only if $x,y\in I$.
\begin{lem}\label{Lemma:OrderIdeals}
	Let $G$ be an ample groupoid. If $I$ is an order ideal in $S(G)$, then there exists an open invariant subset $U\subseteq G^{(0)}$, such that $I\cong S(G_{U})$. 
\end{lem}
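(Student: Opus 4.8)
The plan is to take the candidate open invariant set to be
$$U := \bigcup \{ \supp(f) : [f] \in I \},$$
which is open as a union of compact open sets, and then to exhibit an explicit isomorphism between $S(G_U)$ and $I$. The two things to verify are that $U$ is $G$-invariant and that the natural map $S(G_U)\to S(G)$ induced by extension-by-zero is injective with image exactly $I$. Throughout I will repeatedly use that an order ideal is downward closed for the algebraic preorder: if $[f]\le[h]$ and $[h]\in I$, then writing $[h]=[f]+[h']$ and invoking the defining property of an order ideal yields $[f]\in I$.

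First I would prove that $U$ is $G$-invariant. Given $x\in U$ and $g\in G$ with $s(g)=x$, pick $[f]\in I$ with $x\in\supp(f)$ and a compact open bisection $V\ni g$. Set $K:=\supp(f)\cap s(V)$, a compact open set containing $x$. Since $1_K\le f$ pointwise we get $[1_K]\le[f]\in I$, hence $[1_K]\in I$ by downward closure. Now $W:=V\cap s^{-1}(K)$ is a compact open bisection with $s(W)=K$ and $r(W)=\theta_V(K)$, so $1_K=1_{s(W)}\sim_G 1_{r(W)}=1_{\theta_V(K)}$ and therefore $[1_{\theta_V(K)}]=[1_K]\in I$. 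By the definition of $U$ this forces $\theta_V(K)\subseteq U$, and since $r(g)=\theta_V(x)\in\theta_V(K)$ we conclude $r(g)\in U$. Applying the same argument to $g^{-1}$ gives the reverse implication, so $U$ is invariant; in particular $G_U=G|_U$ is an open ample subgroupoid with unit space $U$.

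Next I would set up the comparison map. Because $U$ is open, $G_U=s^{-1}(U)$ is open in $G$, so every compact open bisection of $G_U$ is a compact open bisection of $G$; extending functions by zero gives an inclusion $C_c(U,\Z)^+\hookrightarrow C_c(G^{(0)},\Z)^+$ which descends to a monoid homomorphism $\phi:S(G_U)\to S(G)$, $[f]_{G_U}\mapsto[f]_G$. For injectivity, suppose $f_1,f_2\in C_c(U,\Z)^+$ satisfy $f_1\sim_G f_2$, witnessed by bisections $W_1,\ldots,W_n$ of $G$ with $f_1=\sum_i 1_{s(W_i)}$ and $f_2=\sum_i 1_{r(W_i)}$. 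Then $\bigcup_i s(W_i)=\supp(f_1)\subseteq U$ and $\bigcup_i r(W_i)=\supp(f_2)\subseteq U$, forcing each $W_i\subseteq G_U$; hence the same data witnesses $f_1\sim_{G_U} f_2$, and $\phi$ is injective.

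Finally I would identify the image of $\phi$ with $I$. If $[h]_G\in I$ then $\supp(h)\subseteq U$ by the definition of $U$, so $h\in C_c(U,\Z)^+$ and $[h]_G=\phi([h]_{G_U})$ lies in the image; thus $I\subseteq\operatorname{im}\phi$. Conversely, given $f\in C_c(U,\Z)^+$, compactness of $\supp(f)$ lets us cover it by finitely many supports $\supp(h_1),\ldots,\supp(h_k)$ with $[h_j]\in I$; setting $h:=\sum_j h_j$ and $m:=\max f$ we have $f\le mh$ pointwise, so $[f]_G\le m[h]\in I$ and hence $[f]_G\in I$ by downward closure. Therefore $\phi$ restricts to a monoid isomorphism $S(G_U)\cong I$, which automatically respects the algebraic preorder. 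I expect the main obstacle to be the invariance of $U$: combining the bisection move $1_{s(W)}\sim_G 1_{r(W)}$ with the downward closure of $I$ in exactly the right way is the one genuinely nontrivial step, whereas injectivity and the image computation are comparatively routine once $U$ is in hand.
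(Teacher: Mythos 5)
Your proof is correct and takes essentially the same route as the paper: the same set $U=\bigcup\{\supp(f)\mid [f]\in I\}$, invariance established by pushing a small compact open neighbourhood of $s(g)$ through a bisection containing $g$ and invoking the order-ideal property, and the identification of $I$ via covering the compact support by finitely many supports of elements of $I$ and using downward closure. The only notable difference is that you explicitly verify injectivity of the extension-by-zero map $S(G_U)\to S(G)$ (using invariance of $U$ to force the witnessing bisections into $G_U$), a point the paper compresses into its final sentence ``Clearly, we have $J\cong S(G_U)$.''
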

\begin{proof}
	Suppose $I$ is an order ideal in $S(G)$. Then $U:=\bigcup\lbrace supp(f)\mid [f]\in I\rbrace$ is an open subset of $G^{(0)}$. To see that it is invariant, let $g\in G$ such that $s(g)\in U$. Then there exists $[f]\in I$ such that $s(g)\in supp(f)$. Now $f$ can be written as $f=\sum_i 1_{A_i}$ for suitable clopen sets $A_i$ and $s(g)$ must be contained in one of these. Since $I$ is an order ideal, each $[1_{A_i}]\in I$. Now let $V$ be a compact open bisection containing $g$ such that $s(V)\subseteq A_i$. Upon refining the representation of $f$ if necessary, we may assume $s(V)=A_i$. Since $[1_{r(V)}]=[1_{A_i}]\in I$ we get $r(g)\in U$.
	
	Now let $J$ denote the ideal of $S(G)$ generated by all the elements of $S(G)$ which can be represented by a function whose support is contained in $U$. Then we clearly have $I\subseteq J$. For the converse inclusion take any $[f]\in S(G)$ such that $supp(f)\subseteq U$. Since the support of $f$ is compact, we may find finitely many functions $f_1,\ldots, f_n$ such that $[f_i]\in I$ with $supp(f)\subseteq \bigcup supp(f_i)$. In particular, we have $f\leq \sum_i n_i f_i$ for suitably large $n_i\in \N$ and hence $[f]\leq \sum_i n_i [f_i]\in I$. Since $I$ is an order ideal, this implies $[f]\in I$ as desired. Clearly, we have $J\cong S(G_U)$.
\end{proof}

Once we have an order ideal $I$ in a monoid $S$ one can define a congruence on $S$ by declaring $x\sim y$ if there exist elements $a,b\in I$ such that $x+a=y+b$. Then $S/I:=S/\sim$ can be equipped canonically with a monoid structure induced by $S$.
To identify the quotients of the type semigroup, note that the set $G^a$ of all compact open bisections of an ample Hausdorff groupoid $G$ forms a Boolean inverse semigroup and the type semigroup $S(G)$ can be canonically identified with the type monoid Typ$(G^a)$ of this inverse semigroup (see \cite[Proposition~7.3]{ABPS19}).

We shall also need the following construction: For two compact open bisections $V_1,V_2$ in $G$ let $E= s(V_1)\setminus (s(V_1)\cap s(V_2))$ and $F=r(V_1)\setminus (r(V_1)\cap r(V_2))$ and define
$$V_1\bigtriangledown V_2= FV_1E\cup V_2.$$
Then $V_1\bigtriangledown V_2$ is a compact open bisection in $G$.

The proof of the following result is essentially contained in \cite[Lemma~5.5]{LV19}. We spell out a sketch of the proof for the readers convenience.
\begin{prop}
	Let $I\subseteq S(G)$ be an order ideal. If $U$ is the open $G$-invariant subset of $G^{(0)}$ corresponding to $I$, and $D=G^{(0)}\setminus U$ its complement, then the canonical map $S(G)\rightarrow S(G_D)$ induced by restriction of functions gives rise to an isomorphism $S(G)/I\cong S(G_D)$.
\end{prop}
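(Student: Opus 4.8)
The plan is to construct the restriction homomorphism explicitly, check that it factors through the quotient congruence, and then establish bijectivity, with the injectivity argument resting on an extension property for bisections.

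First I would define $\rho\colon C_c(G^{(0)},\Z)^+\to C_c(D,\Z)^+$ by $\rho(f)=f|_D$; since $D$ is closed this lands in $C_c(D,\Z)^+$, and it is clearly additive. To see that $\rho$ descends to a monoid homomorphism $\bar\rho\colon S(G)\to S(G_D)$, I would use that $D$ is $G$-invariant: for a compact open bisection $W$ of $G$ the intersection $W\cap G_D$ is a compact open bisection of $G_D$ with $s(W\cap G_D)=s(W)\cap D$ and $r(W\cap G_D)=r(W)\cap D$ (invariance gives $G_D=G^D$). Hence if $f_1=\sum_i 1_{s(W_i)}$ and $f_2=\sum_i 1_{r(W_i)}$ then $f_1|_D=\sum_i 1_{s(W_i\cap G_D)}$ and $f_2|_D=\sum_i 1_{r(W_i\cap G_D)}$, so $f_1\sim_G f_2$ forces $f_1|_D\sim_{G_D} f_2|_D$. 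Surjectivity of $\bar\rho$ is immediate once one observes that every compact open subset of $D$ is the trace on $D$ of a compact open subset of $G^{(0)}$ (total disconnectedness lets one separate a compact set from a disjoint closed set by a clopen set), so any $g\in C_c(D,\Z)^+$ lifts to some $f$ with $f|_D=g$.

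Next I would show that $\bar\rho$ annihilates $I$, so that it induces $\phi\colon S(G)/I\to S(G_D)$. The key point is that $[f]\in I$ forces $\supp(f)\subseteq U$: writing $f=\sum_i 1_{A_i}$ with $A_i$ clopen, any point of $\supp(f)\cap D$ lies in some $A_j$, whence $[1_{A_j}]\le[f]\in I$; since order ideals are downward closed this gives $[1_{A_j}]\in I$, so $A_j\subseteq U$ by the very definition of $U$, contradicting $A_j\cap D\neq\emptyset$. Thus $f|_D=0$ and $\bar\rho([f])=0$. Well-definedness of $\phi$ on the congruence classes of $S(G)/I$ then follows formally, and surjectivity of $\phi$ is inherited from that of $\bar\rho$.

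The heart of the matter is injectivity of $\phi$, and the main obstacle is an extension lemma for bisections: every compact open bisection $W$ of $G_D$ extends to a compact open bisection $\hat W$ of $G$ with $\hat W\cap G_D=W$. I would prove this by covering $W$ (compact in $G$) by finitely many compact open bisections $B_1,\dots,B_m$ of $G$ contained in an open set $O$ with $O\cap G_D=W$, and then disjointifying first in the source and then in the range coordinate; invariance of $D$ together with the fact that $W$ is already a bisection guarantees that each disjointification step neither loses an element of $W$ nor adds an element of $G_D$ outside $W$. Granting this, suppose $[f|_D]=[f'|_D]$, witnessed by bisections $W_1,\dots,W_n$ of $G_D$ with $f|_D=\sum_i 1_{s(W_i)}$ and $f'|_D=\sum_i 1_{r(W_i)}$. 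Lifting each $W_i$ to $\hat W_i$ and setting $h=\sum_i 1_{s(\hat W_i)}$, $h'=\sum_i 1_{r(\hat W_i)}$ yields $[h]=[h']$ in $S(G)$ together with $h|_D=f|_D$ and $h'|_D=f'|_D$. Finally I would compare $f$ with $h$ (and $f'$ with $h'$) via positive and negative parts: since $f$ and $h$ agree on $D$, the functions $(f-h)^+$ and $(h-f)^+$ are supported in $U$ and hence represent elements of $I$, and the pointwise identity $f+(h-f)^+=h+(f-h)^+$ gives $[f]+a_1=[h]+b_1$ with $a_1,b_1\in I$; the analogous identity relates $[f']$ and $[h']$. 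Combining these with $[h]=[h']$ produces $a,b\in I$ with $[f]+a=[f']+b$, which is exactly the statement that $[f]$ and $[f']$ are congruent modulo $I$. This establishes injectivity and completes the isomorphism $S(G)/I\cong S(G_D)$.
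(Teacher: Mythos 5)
Your proposal is correct, but it takes a genuinely more self-contained route than the paper. The paper's proof is essentially a reduction to abstract machinery: it identifies $S(G)$ with the type monoid $\mathrm{Typ}(G^a)$ of the Boolean inverse semigroup $G^a$ of compact open bisections, quotes Wehrung's quotient theorem \cite[Theorem 4.3.2]{W17}, and is then left with a single verification, namely that the canonical map $G^a\to (G_D)^a$ is surjective, which it carries out using the operation $V_1\bigtriangledown V_2$. Your proof shares exactly this crucial step: your extension lemma (every compact open bisection $W$ of $G_D$ is the trace on $G_D$ of a compact open bisection of $G$) \emph{is} that surjectivity, and your source/range disjointification is morally the same device as $\bigtriangledown$ --- and it does work, since if $w\in W$ has $s(w)\in s(B_j)$, then the element of $B_j$ over $s(w)$ lies in $G_D$ by invariance of $D$, hence in $W$, hence equals $w$ because $W$ is a bisection, so shrinking the $B_j$ never loses points of $W$, while it obviously adds none. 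Where you genuinely diverge is that you prove the quotient isomorphism by hand instead of citing Wehrung: you check that restriction descends to $S(G)$ and kills $I$ (using the correspondence $[f]\in I \Leftrightarrow \supp(f)\subseteq U$ from the preceding lemma), you lift compact open subsets of $D$ to get surjectivity, and you get injectivity via the pointwise identity $f+(h-f)^+=h+(f-h)^+$, whose correction terms are supported in $U$ and hence lie in $I$, which is exactly the congruence defining $S(G)/I$. Your version buys a fully elementary argument that makes explicit where invariance of $D$ and the structure of $I$ enter; the paper's version is much shorter and places the statement inside the general theory of type monoids of Boolean inverse semigroups, at the cost of invoking that theory as a black box.
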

\begin{proof}
	Upon identifying $S(G)$ with Typ$(G^a)$, the result follows from \cite[Theorem 4.3.2]{W17} once we realize that the canonical semigroup homomorphism $G^a\rightarrow (G_D)^a$ is surjective. To see this proceed as follows: If $V\subseteq G_D$ is a compact open bisection, then by definition of the induced topology and using the fact that being compact does not depend on the ambient space, we can find finitely many compact open bisections $U_1,\ldots, U_n$ in $G$ such that $V=\bigcup_i U_i\cap G_D$. Then $\bigtriangledown_i U_i$ is a compact open bisection in $G$ such that $(\bigtriangledown_i U_i) \cap G_D=V$.
\end{proof}

\subsection{Groupoid homology and its relation with $S(G)$}

Let us now turn our attention to understand the relationship between the type semigroup of an ample groupoid $G$ and the positive cone $H_0(G)^+$ of the (integral) groupoid homology $H_0(G)$. We refer the reader to \cite[Section~3]{MR2876963} for the relevant definitions. The relevant property here is cancellation: Recall that we say that a semigroup $S$ is \textit{cancellative} if for $a,b,c,\in S$ satisfying $a+c=b+c$, it follows that $a=b$.

\begin{lem}
	Let $G$ be an ample groupoid with a compact unit space. Then the quotient map $C(G^{(0)},\Z)\rightarrow H_0(G)$ induces a surjective semigroup homomorphism
	$$ S(G)\rightarrow H_0(G)^+$$
\end{lem}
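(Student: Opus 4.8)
The plan is to exhibit an explicit map at the level of the free monoids and then verify that it descends compatibly to the two relevant quotients. Recall that $H_0(G)$ is the cokernel of the boundary map $\partial_1\colon C_c(G,\Z)\to C_c(G^{(0)},\Z)$, where $\partial_1 = s_* - r_*$ in the homological complex of Matui; since $G^{(0)}$ is compact, $C_c(G^{(0)},\Z) = C(G^{(0)},\Z)$. First I would take the quotient map $q\colon C(G^{(0)},\Z)\to H_0(G)$ and restrict it to the positive cone $C(G^{(0)},\Z)^+ = C_c(G^{(0)},\Z)^+$, landing (by definition of the positive cone $H_0(G)^+$ as the image of the positive cone) in $H_0(G)^+$. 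The content is to show this restriction factors through the equivalence relation $\sim_G$ defining $S(G)$.

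The key step is therefore the compatibility check: if $f_1\sim_G f_2$, then $q(f_1) = q(f_2)$ in $H_0(G)$. By definition of $\sim_G$ there are compact open bisections $W_1,\dots,W_n$ with $f_1 = \sum_{i=1}^n 1_{s(W_i)}$ and $f_2 = \sum_{i=1}^n 1_{r(W_i)}$. It thus suffices to observe that for a single compact open bisection $W$ one has $1_{s(W)} - 1_{r(W)} \in \operatorname{im}(\partial_1)$, so that $q(1_{s(W)}) = q(1_{r(W)})$; summing over $i$ then gives $q(f_1) = q(f_2)$. Concretely, the characteristic function $1_W \in C_c(G,\Z)$ is a natural $1$-chain, and I would compute $\partial_1(1_W) = s_*(1_W) - r_*(1_W) = 1_{s(W)} - 1_{r(W)}$, using precisely that $W$ is a bisection so that $s$ and $r$ restrict to homeomorphisms $W\to s(W)$ and $W\to r(W)$ and the pushforwards of $1_W$ are the indicated characteristic functions. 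Hence $\partial_1(1_W)$ is exactly the difference we need, and the map $[f]\mapsto q(f)$ is well defined on $S(G)$.

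It remains to check that the induced map $S(G)\to H_0(G)^+$ is a semigroup homomorphism and is surjective. Additivity is immediate since both the class $[f]$ in $S(G)$ and $q(f)$ in $H_0(G)$ are additive in $f$ for the pointwise addition on $C_c(G^{(0)},\Z)^+$, and the neutral element $[0]$ maps to $0$. For surjectivity, note that $H_0(G)^+$ is by definition generated as the image of $C(G^{(0)},\Z)^+$ under $q$; every element of $H_0(G)^+$ is $q(f)$ for some $f\in C(G^{(0)},\Z)^+ = C_c(G^{(0)},\Z)^+$, which is the image of $[f]$. The only subtlety I would flag is making sure the target is taken to be the positive cone $H_0(G)^+$ rather than all of $H_0(G)$: surjectivity holds precisely onto the positive cone, and this matches the definition of $H_0(G)^+$ as $q\big(C(G^{(0)},\Z)^+\big)$.

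The step I expect to be the only real (if minor) obstacle is pinning down the orientation convention in Matui's complex so that $\partial_1(1_W)$ yields $1_{s(W)} - 1_{r(W)}$ with the correct sign, and confirming that the pushforward maps $s_*$ and $r_*$ on compactly supported integer-valued functions are the ones appearing in $\partial_1$; once the convention is fixed, everything else is a direct computation and a routine verification that the assignment respects the two defining relations.
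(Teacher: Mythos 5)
Your proof is correct and follows essentially the same route as the paper: both exhibit $f_1-f_2$ as $\partial_1$ of the $1$-chain built from indicator functions of the bisections, using that $s_*(1_{W})=1_{s(W)}$ and $r_*(1_{W})=1_{r(W)}$ since $s,r$ restrict to homeomorphisms on a bisection. The sign convention you flag is immaterial, since $\operatorname{im}(\partial_1)$ is a subgroup and hence closed under negation.
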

\begin{proof}
	We need to show that the map is well-defined. Suppose $f,g\in C(G^{(0)},\Z)^+$ such that $f\sim g$ in $S(G)$. We will show that $f-g\in im(\partial_1)$, where $\partial_1:C_c(G,\Z)\rightarrow C(G^{(0)},\Z)$ is the differential map from the chain complex defining groupoid homology. This immediately implies $[f]=[g]$ in $H_0(G)^+$.
	Since $f\sim g$ in $S(G)$ we can find bisections $V_1,\ldots, V_n$ such that 
	$$f=\sum\limits_{i=1}^n 1_{s(V_i)}\text{ and } g=\sum\limits_{i=1}^n 1_{r(V_i)}.$$
	Let $h:=\sum_{i=1}^n 1_{V_i}\in C_c(G,\Z)$. Then 
	$$\partial_1(h)=\sum\limits_{i=1}^n s_*(1_{V_i})-r_*(1_{V_i})=\sum\limits_{i=1}^n 1_{s(V_i)}-1_{r(V_i)}=f-g$$
	as desired.
\end{proof}

Before the next result, let us recall the construction of the universal cancellative abelian semigroup.
Let $S$ be an abelian semigroup with $0\in S$, and consider the equivalence relation on $S$ given by $x\sim y$ if there exists an element $z\in S$ such that $x+z=y+z$.
Then $\sim$ is an equivalence relation and $C(S):=S/\sim$ is a cancellative abelian semigroup with the (universal) property, that for every homomorphism $\Phi:S\rightarrow P$ into a cancellative abelian semigroup $P$ there exists a unique homomorphism $C(\Phi):C(S)\rightarrow P$ such that $C(\Phi)([s])=\Phi(s)$.

\begin{prop}\label{Prop:Cancellative hull of type semigroup}
	Let $G$ be an ample groupoid with compact unit space. Then the canonical map $S(G)\rightarrow H_0(G)^+$ induces an isomorphism of cancellative abelian semigroups.
	$$C(S(G))\rightarrow H_0(G)^+$$
\end{prop}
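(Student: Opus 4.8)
The plan is to lean on the preceding lemma, which supplies a surjective semigroup homomorphism $\Phi\colon S(G)\to H_0(G)^+$ sending $[f]$ to the class of $f$ in $H_0(G)$. Since $H_0(G)^+$ is a subsemigroup of the abelian group $H_0(G)$, it is cancellative, so by the universal property of the construction $C(-)$ recalled above, $\Phi$ factors uniquely through the canonical map $\pi\colon S(G)\to C(S(G))$, yielding a homomorphism $C(\Phi)\colon C(S(G))\to H_0(G)^+$ with $C(\Phi)\circ\pi=\Phi$. As $\Phi$ is surjective, so is $C(\Phi)$; hence the whole problem reduces to proving that $C(\Phi)$ is injective.

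First I would unwind injectivity into an elementary statement about the congruence $\sim_G$. Writing $[f]$ for the image of $f$ in $C(S(G))$, one has $[f]=[g]$ precisely when there is some $h\in C_c(G^{(0)},\Z)^+$ with $f+h\sim_G g+h$. On the other hand, $C(\Phi)([f])=C(\Phi)([g])$ unwinds to $\Phi([f])=\Phi([g])$, i.e. $f-g\in\operatorname{im}(\partial_1)$, where $\partial_1\colon C_c(G,\Z)\to C(G^{(0)},\Z)$ is the boundary map. So the goal becomes: given $f-g=\partial_1(k)$ for some $k\in C_c(G,\Z)$, produce an auxiliary $h$ witnessing $f+h\sim_G g+h$.

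The key computation transports the homological relation into the defining relation of $\sim_G$. Expanding $k$ as a $\Z$-combination of indicators of compact open bisections and splitting by sign gives $k=\sum_{i=1}^m 1_{W_i}-\sum_{l=1}^p 1_{U_l}$ with $W_i,U_l$ compact open bisections, so that applying $\partial_1$ and moving all negative contributions to the opposite side yields the honest equality of functions
$$f+\sum_{i=1}^m 1_{r(W_i)}+\sum_{l=1}^p 1_{s(U_l)}=g+\sum_{i=1}^m 1_{s(W_i)}+\sum_{l=1}^p 1_{r(U_l)}.$$
Setting $h:=\sum_{i=1}^m 1_{s(W_i)}+\sum_{l=1}^p 1_{r(U_l)}$, the right-hand side is exactly $g+h$. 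For the left-hand side I would note that $h\sim_G \sum_{i=1}^m 1_{r(W_i)}+\sum_{l=1}^p 1_{s(U_l)}$, using the bisections $W_i$ together with the inverse bisections $U_l^{-1}$; since $\sim_G$ is a monoid congruence, adding $f$ preserves this, giving $f+h\sim_G g+h$ and hence $[f]=[g]$ in $C(S(G))$, as required.

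The step I expect to require the most care is organizational rather than deep: choosing the witness $h$ correctly and keeping track of which indicator functions land on which side when passing from $\partial_1(k)=f-g$ to a genuine equality of nonnegative integer-valued functions. The conceptual content is simply the observation that each elementary generator $1_{s(W)}-1_{r(W)}$ of $\operatorname{im}(\partial_1)$ is, after transposing its negative part, exactly an elementary $\sim_G$-equivalence; everything else follows formally from the fact that $\sim_G$ is a symmetric, additive congruence and that $H_0(G)^+$ is cancellative.
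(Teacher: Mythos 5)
Your proposal is correct and follows essentially the same route as the paper: use the universal property of $C(-)$ together with cancellativity of $H_0(G)^+$ to get a surjective homomorphism $C(\Phi)$, then prove injectivity by turning $f-g=\partial_1(k)$ into the equality $f+r_*(k)=g+s_*(k)$ and observing that $r_*(k)\sim_G s_*(k)$, so that the two classes agree after adding a common element. Your explicit splitting $k=\sum_i 1_{W_i}-\sum_l 1_{U_l}$ and the use of the inverse bisections $U_l^{-1}$ to handle the negative part is exactly the sign bookkeeping the paper compresses into the phrase ``appropriately chosen compact open bisections,'' so the two arguments coincide in substance.
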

\begin{proof}
	By universality, one can build a well-defined surjective homomorphism  $C(S(G))\rightarrow H_0(G)^+$. Hence, it remains to check its injectivity. Let $f,g\in C(G^{(0)},\Z)^+$ such that $[f]=[g]$ in $H_0(G)^+$.
	Then $f-g\in im(\partial_1)$, i.e. there exists a function $h\in C_c(G,\Z)$ such that
	$$f-g=\partial_1(h)=s_*(h)-r_*(h)$$
	This implies that $f+r_*(h)=g+s_*(h)$.
	Since $h$ is compactly supported and $G$ is ample, we can write $h=\sum\limits_{i=1}^m 1_{V_i}$ for appropriately chosen compact open bisections $V_1,\ldots, V_m$, which implies that $r_*(h)\sim s_*(h)$. This concludes the proof since if $x:=[s_*(h)]=[r_*(h)]$, then $[f]+x=[g]+x$ in $S(G)$ and hence $[f]=[g]$ in $C(S(G))$.
\end{proof}

\section{Dynamical comparison}
In this section we study the relation between almost unperforation of the type semigroup and dynamical comparison, an important regularity property.
Since our definition is rather general and in particular not limited to minimal groupoids we need to recall some facts about (possibly infinite) Borel measures for locally compact Hausdorff spaces.

For a topological space $X$, we denote by $UM(X)$  the cone  of positive Borel measures on $X$. For a given Borel subset $B$ of $X$, 
the convex subset $UM(X,B)\subseteq UM(X)$ consists of those $\mu \in UM(X)$ such that $\mu (B) =1$. If $X$ is further locally compact and Hausdorff, we denote by $UM_c(X)$ the cone of all the positive 
regular Borel measures $\mu$ on $X$ such that $\mu (K)<\infty$ for all compact sets $K$ of $X$. By \cite{Rud87}, if $X$ is in addition $\sigma$-finite, then $UM_c(X)$ can be identified with the positive part of the dual space of 
the space $C_c(X)$. Finally, if $X$ is compact, we will denote by $M(X)$ the compact convex set of all the positive regular Borel probability measures on $X$, which is isomorphic to the positive part of the 
unit ball of the dual of $C(X)$. 

Now let $G$ be an \'etale groupoid (so that $G^{(0)}$ is a locally compact Hausdorff space), and recall that a Borel measure $\mu$ on $G^{(0)}$ is called \textit{$G$-invariant} if $\mu(s(V))=\mu(r(V))$ for every open 
bisection $V\subseteq G$. Slightly abusing notation, we write $UM(G)$ for the subcone of $UM(G^{(0)})$ of all the invariant positive Borel measures on $G^{(0)}$.  Similarly, we will write $UM_c(G)$ for the subcone of $UM_c(G^{(0)})$ 
consisting of all the invariant positive regular Borel measures $\mu$ on $X$ such that $\mu (K)<\infty$ for all compact subsets $K$ of $G^{(0)}$. If in addition $G^{(0)}$ is compact, we denote by $M(G)$ the compact 
convex set of invariant positive regular Borel probability measures on $G^{(0)}$.  

We now introduce a version of dynamical comparison which also works in the non-minimal case.

\begin{defi}
	\label{def:dyn-comparison-nonminimal}
	Let $G$ be an ample groupoid.
	For two compact open subsets $A,B\subseteq G^{(0)}$ we say that $A$ \textit{is subequivalent to} $B$ and write $A\precsim B$, if there exist finitely many compact open bisections $V_1,\ldots,V_n$ of $G$ such that $A=\bigsqcup_{i=1}^n s(V_i)$ and the sets $\{r(V_i)\}_{i=1}^n$ 
	are pairwise disjoint subsets of $B$.
		
	We say that $G$ has \textit{dynamical comparison} if for all nonempty compact open subsets $A,B\subseteq G^{(0)}$ such that $A\subset r(GB)$
	and satisfying $\mu(A)<\mu(B)$ for every $\mu \in UM(G)$ such that $0< \mu (B) <\infty$, we have $A\precsim B$.
	We say that $G$ has {\it stable dynamical comparison} if $G^m$ has dynamical comparison for all $m\geq 1$, where $G^m$ denotes the product groupoid $G\times \mathcal R_m$.\footnote{\ $\mathcal R_m:= \lbrace  1,\dots ,m\rbrace^2$ is the discrete full equivalence relation.}

\end{defi}

\begin{rem}
If $G$ is a \emph{minimal} ample groupoid \emph{with compact unit space $G^{(0)}$}, then $G$ has dynamical comparison if and only if for all nonempty clopen subsets $A,B\subseteq G^{(0)}$ satisfying $\mu(A)<\mu(B)$ for every $\mu \in M(G)$, then $A\precsim B$. Hence, it follows from \cite[Proposition~3.6]{1710.00393} that our notion of dynamical comparison generalizes Kerr's dynamical comparison at least in the ample case.  
\end{rem}

We now come back to the type semigroup $S(G)$, which is a useful tool to study dynamical comparison. This is due to the fact that dynamical subequivalence $A\precsim B$ translates to the inequality $[1_A]\leq [1_B]$ in the type semigroup.

Moreover, the invariant Borel measures on the unit space can be canonically identified with certain functionals on the type semigroup:

 For a preordered monoid $(S,+,\le )$ we denote by $F(S)$ the set of all unnormalized states on $S$, that is the set of all the monoid homomorphisms
$S\to [0,\infty]$. Note that $F(S)$ is a cone, i.e., we can sum and multiply by positive real numbers. If $x\in S$, we define the set of states on $S$ which are normalized at $x$ as 
$F(S,x)= \{ f\in F(S) : f(x)=1 \}$. This set might be empty, but in any case it is a convex subset of $F(S)$. 

Given an ample groupoid $G$, the set $\mathbb K$ of compact open subsets of $G^{(0)}$ is a ring of subsets of $G^{(0)}$, that is, it is closed under finite unions and relative complements (meaning that 
$E\setminus F\in \mathbb K$ if $E,F\in  \mathbb K$). The type semigroup $S(G)$ can now also be defined (see \cite[Proposition 7.3]{ABPS19}) as the commutative monoid with generators 
$[U]$ for each $U\in \mathbb K$ subject to the relations:
\begin{enumerate}
 \item $[\emptyset] = 0$,
 \item $[A\cup B]=[A]+[B]$ if $A,B\in \mathbb K$ and $A\cap B=\emptyset$,
 \item $[s(V)]=[r(V)]$ for each compact open bisection $V$ of $G$. 
\end{enumerate}
With this description, it is obvious that $F(S(G))$ is the set of all the finitely additive invariant positive measures on $\mathbb K$.  

We can now extend \cite[Lemma 5.1]{RS12} to groupoids as follows.

\begin{lem}
 \label{lem:RS-forgroupoids}
 Let $G$ be an ample second countable groupoid. 
 Then each $f  \in F(S(G))$ can be extended to a Borel invariant measure $\mu_f\in  UM(G)$. Moreover, the restriction of the measure $\mu_f$ to the open set
 $V:= \bigcup K$, where $K$ ranges over all the compact open subsets of $G^{(0)}$ such that $f([1_K]) <\infty$, is unique and regular.  
 \end{lem}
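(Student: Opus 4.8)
The plan is to regard $f$, via the description of $F(S(G))$ recalled just above, as a finitely additive invariant positive measure $\mu$ on the ring $\mathbb{K}$ of compact open subsets of $G^{(0)}$ (so $\mu(U)=f([1_U])$), and then to invoke the Carathéodory--Hahn extension theorem. The decisive observation is that on $\mathbb{K}$ finite additivity already forces the countable additivity required of a premeasure: if $A\in\mathbb{K}$ and $A=\bigsqcup_{n=1}^\infty A_n$ with the $A_n\in\mathbb{K}$ pairwise disjoint, then the $A_n$ form an open cover of the compact set $A$, so finitely many of them already cover $A$ and, by disjointness, all the remaining ones are empty; finite additivity then yields $\mu(A)=\sum_n\mu(A_n)$. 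Thus $\mu$ is a $[0,\infty]$-valued premeasure on $\mathbb{K}$.

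Second, I would apply the Carathéodory extension theorem to produce a measure $\mu_f$ on $\sigma(\mathbb{K})$ extending $\mu$. Since $G^{(0)}$ is second countable, locally compact, Hausdorff and totally disconnected, it has a countable basis of compact open sets, so every open set lies in $\sigma(\mathbb{K})$ and hence $\sigma(\mathbb{K})$ is exactly the Borel $\sigma$-algebra of $G^{(0)}$; this yields the desired Borel measure $\mu_f$. To check $\mu_f\in UM(G)$, note that invariance on compact open bisections holds by construction (it holds for $\mu=f$), and I extend it to an arbitrary open bisection $V$ as follows: using ampleness and second countability, write $V=\bigcup_n V_n$ as a countable union of compact open bisections contained in $V$, and disjointify to $V=\bigsqcup_n W_n$ with each $W_n$ a compact open bisection (in a Hausdorff space compact open sets are clopen, so the differences remain compact open). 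Because $V$ is a bisection, $s$ and $r$ are injective on it, so $s(V)=\bigsqcup_n s(W_n)$ and $r(V)=\bigsqcup_n r(W_n)$ are disjoint unions, and countable additivity of $\mu_f$ gives $\mu_f(s(V))=\sum_n\mu_f(s(W_n))=\sum_n\mu_f(r(W_n))=\mu_f(r(V))$.

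Finally, for the restriction to $V=\bigcup K$ (the union over compact open $K$ with $f([1_K])<\infty$): by construction $\mu_f$ is finite on each such $K$, so $\mu_f|_V$ is locally finite and, since $V$ is second countable, also $\sigma$-finite; every compact subset of $V$ is covered by finitely many of the $K$'s and hence has finite measure. A locally finite Borel measure on the second countable locally compact Hausdorff space $V$ is automatically regular, which gives the regularity claim. For uniqueness, the compact open subsets of $V$ of finite measure form a $\pi$-system generating the Borel $\sigma$-algebra of $V$ on which any two invariant Borel extensions of $f$ are $\sigma$-finite and agree, whence they coincide on $V$ by the standard uniqueness theorem; uniqueness is asserted only on $V$ precisely because $\mu_f$ need not be $\sigma$-finite off $V$, where the extension may genuinely fail to be unique. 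I expect the only delicate points to be the invariance of the Borel extension on all open bisections (handled by the disjointification above) and the bookkeeping of $\sigma$-finiteness needed to isolate the regularity and uniqueness statements on $V$ from the non-uniqueness on its complement; the premeasure property, usually the crux of such extension arguments, is here essentially free thanks to the compactness observation in the first step.
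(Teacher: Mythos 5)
Your proposal is correct and follows essentially the same route as the paper: turn $f$ into a premeasure on the compact open sets, apply a Carath\'eodory-type extension theorem to get a Borel measure, verify invariance by writing an arbitrary open bisection as a countable disjoint union of compact open bisections and using countable additivity, and obtain regularity and uniqueness of the restriction to $V$ from $\sigma$-finiteness (via Rudin's regularity theorem and the standard $\pi$-system uniqueness argument, exactly as the paper invokes \cite[Theorem 2.18]{Rud87} and the uniqueness clause of \cite[Theorem 1.14]{Foll84}). The only cosmetic difference is that the paper first enlarges the ring $\mathbb{K}$ to the algebra of subsets $A$ with $A$ or $A^{\mathrm{c}}$ compact open in order to quote Folland's theorem for algebras, whereas you apply the extension theorem directly to the ring $\mathbb{K}$, where your compactness observation renders countable additivity automatic --- the same reason the paper's premeasure verification is routine.
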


\begin{proof} As explained above, $\mathbb{K}$ is a ring of subsets of $G^{(0)}$. Therefore the set $\mathcal A$ of all the subsets $A$ of $G^{(0)}$ such that either $A$ or $A^{{\rm c}}$ belongs to $\mathbb K$ is an algebra of subsets of $G^{(0)}$
(i.e. it is closed under finite unions and complements). Note that all the members of $\mathcal A$ are clopen sets. In particular, since $G^{(0)}$ is second countable and totally disconnected 
each $A\in \mathcal A$ can be written as $A=\bigsqcup_{i=1}^{\infty} A_i$, where $A_i$ are compact open subsets of $G^{(0)}$. 

Given $f\in F(S(G)))$, we can define a premeasure $\mu$ on $\mathcal A$ by the rule $\mu (A) = f([1_A])$ if $A\in \mathbb K$, and 
$\mu (A) = \sum_{i=1}^{\infty} f([1_{A_i}])$ if $A=\bigsqcup_{i=1}^{\infty} A_i$ where $A_i$ are compact open subsets of $G^{(0)}$, if $A^{{\rm c}}\in \mathbb K$.
It is easy to check that the definition of $\mu (A)$ does not depend on the particular decomposition of $A$ into a disjoint union of a sequence
of compact open subsets of $G^{(0)}$, and that $\mu$ is a premeasure on $\mathcal A$. 

 By \cite[Theorem 1.14]{Foll84}, given $f\in F(S(G))$ there exists a positive Borel measure $\mu_f$ such that  $\mu_f (A) = f([1_A])$ for each $A\in \mathcal A$.
 In particular, this holds for every compact open subset $A$ of $G^{(0)}$. 
 To show that $\mu_f$ is invariant, take an open bisection $U$. Then since $G^{(0)}$ is second countable and totally disconnected, we can write
 $U= \bigsqcup_{i=1}^{\infty} U_i$, where $U_i$ are compact open subsets of $U$ (and thus compact open bisections).
 Now we get 
 $$\mu _f(s(U))= \sum_{i=1}^{\infty} \mu_f (s(U_i)) = \sum_{i=1}^{\infty} f([1_{s(U_i)}]) =  \sum_{i=1}^{\infty} f([1_{r(U_i)}]) = \sum _{i=1}^{\infty} \mu_f (r(U_i))= \mu_f (r(U)).$$
 This shows invariance of $\mu_f$.
 
 Now let $V= \bigcup  K $, where $K$ ranges over all the compact open subsets of $G^{(0)}$ such that $f([1_K]) <\infty$. Then $V$ is $\sigma$-finite, 
 and thus by \cite[Theorem 1.14]{Foll84}, there is a unique Borel measure
 $\ol{\mu}$ on $V$ such that $\ol{\mu}(K)=f([1_K])$ for all compact open subset $K$ of $V$. Hence the restriction of $\mu_f$ to $V$ is $\ol{\mu}$, and it is unique.  
 
 Now observe that every open subset of $V$ is $\sigma$-compact and that $\ol{\mu}(K)<\infty$ for each compact set $K$ of $V$. Hence, it follows from \cite[Theorem 2.18]{Rud87} 
 that $\ol{\mu}$ is a regular measure.  
  \end{proof}

 The following Lemma gives some justification that our definition of dynamical comparison is a sensible one for non-minimal groupoids.
 \begin{lem}
 \label{lem:dyn-comparison-forrestrictions}
 	Let $G$ be an ample groupoid. Then $G$ has dynamical comparison if and only if $G_U$ has dynamical comparison for every open $G$-invariant subset $U\subseteq G^{(0)}$.
 \end{lem}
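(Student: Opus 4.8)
The plan is to prove the two implications separately, noting that essentially all of the content lies in the forward direction. The backward implication is immediate: $G^{(0)}$ is itself an open $G$-invariant subset of $G^{(0)}$ and $G_{G^{(0)}}=G$, so if $G_U$ has dynamical comparison for every open $G$-invariant $U$, then in particular $G$ does.

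For the forward direction I would fix an open $G$-invariant subset $U\subseteq G^{(0)}$ and first record the structural facts about the restriction $G_U=G|_U$. Since $U$ is open and $G$-invariant, $G_U=s^{-1}(U)=r^{-1}(U)$ is an \emph{open} subgroupoid of $G$; it is therefore ample with unit space $U$, a compact open bisection of $G_U$ is exactly a compact open bisection of $G$ contained in $G_U$, and a compact open subset of $U$ is the same as a compact open subset of $G^{(0)}$ that happens to lie in $U$.

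The key technical point is the behaviour of invariant measures under restriction. I would show that $\mu\mapsto \mu|_U$, where $\mu|_U(E):=\mu(E\cap U)$, maps $UM(G)$ into $UM(G_U)$: invariance is tested on bisections $W$ of $G_U$, which are bisections of $G$ with $s(W),r(W)\subseteq U$, so $\mu|_U(s(W))=\mu(s(W))=\mu(r(W))=\mu|_U(r(W))$. The decisive feature is that $\mu|_U(E)=\mu(E)$ for every Borel set $E\subseteq U$, so restriction preserves both the value and the finiteness/positivity of the measure of any subset of $U$.

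With these preliminaries in place, given nonempty compact open $A,B\subseteq U$ with $A\subset r(G_U B)$ and $\nu(A)<\nu(B)$ for every $\nu\in UM(G_U)$ with $0<\nu(B)<\infty$, I would transfer the hypotheses to $G$ and apply its dynamical comparison. Indeed $A\subset r(G_U B)\subseteq r(GB)$, and for any $\mu\in UM(G)$ with $0<\mu(B)<\infty$ the restriction $\mu|_U$ lies in $UM(G_U)$ and satisfies $0<\mu|_U(B)<\infty$, whence the $G_U$-hypothesis applied to $\mu|_U$ yields $\mu(A)=\mu|_U(A)<\mu|_U(B)=\mu(B)$. Dynamical comparison of $G$ then produces compact open bisections $V_1,\dots,V_n$ of $G$ with $A=\bigsqcup_i s(V_i)$ and the $r(V_i)$ pairwise disjoint subsets of $B$. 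Since $s(V_i)\subseteq A\subseteq U$ and $r(V_i)\subseteq B\subseteq U$, each $V_i$ is contained in $G_U$, so these very bisections witness $A\precsim_{G_U} B$, as required. The only genuine subtlety I anticipate is the measure step, namely checking that restriction sends invariant measures to invariant measures and preserves the conditions on $B$, and in particular getting the logical direction right by restricting $G$-measures in order to invoke the hypothesis on $G_U$-measures; the passage of compact open bisections between $G$ and its open restriction is then routine bookkeeping.
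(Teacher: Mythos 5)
Your proposal is correct and follows essentially the same route as the paper's proof: restrict an arbitrary $\mu\in UM(G)$ with $0<\mu(B)<\infty$ to $\mu|_U\in UM(G_U)$, use the hypothesis on $G_U$-measures to get $\mu(A)<\mu(B)$, invoke dynamical comparison of $G$, and observe that the witnessing bisections lie in $G_U$ since their sources and ranges are contained in the invariant set $U$. The paper compresses the last bookkeeping step into the phrase ``using again that $U$ is $G$-invariant,'' which you spell out explicitly.
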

\begin{proof}
	We only need to show that dynamical comparison passes to restrictions of $G$ to open $G$-invariant subsets. 
	Let $U\subseteq G^{(0)}$ be such an open $G$-invariant subset, and let $A,B\subseteq U$ be compact open subsets of $U$ such 
	that $A\subseteq r(G_U B)$. Moreover, assume that $\nu(A)<\nu(B)$ for every $\nu\in UM(G_U)$ such that $0<\nu (B)<\infty$. Let $\mu \in UM(G)$ such that $0<\mu (B)<\infty$. Then $\mu$ restricts to a measure $\mu_U\in UM(G_U)$ such that $0<\mu_U(B)<\infty$ and so $\mu (A) =\mu_U(A)<\mu_U(B)= \mu (B)$. Since $G$ has dynamical comparison and using again that $U$ is $G$-invariant, we obtain the desired conclusion.
\end{proof}
   

   
As mentioned in the introduction, in \cite{boenicke_li_2018,rainone_sims} it turned out that almost unperforation of the type semigroup is a very desirable property. 
Recall, that a preordered monoid $S$ is called \textit{almost unperforated} if whenever $x,y\in S$ and $n\in\N$ satisfy $(n+1)x\leq ny$, then $x\leq y$. One of the main goals of this paper is to relate almost unperforation of the type semigroup with certain properties of the underlying groupoid.
We can now relate stable dynamical comparison with almost unperforation of $S(G)$. 
   
\begin{lem}
 \label{lem:a-unp-implies-dyn-comparison}
 Let $G$ be an ample groupoid. If $G$ satisfies stable dynamical comparison, then $S(G)$ is almost unperforated. If we require additionally that $G$ is second countable, then the converse is also true.
 \end{lem}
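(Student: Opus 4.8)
The plan is to prove both directions by relating the algebraic structure of $S(G)$ to the geometric notion of subequivalence. For the first implication, I would assume $G$ has stable dynamical comparison and show $S(G)$ is almost unperforated. So suppose $x,y\in S(G)$ satisfy $(n+1)x\le ny$ for some $n\in\N$. The key idea is to represent $x$ and $y$ by indicator functions of compact open subsets \emph{after passing to the amplified groupoid} $G^m=G\times\mc R_m$. Concretely, an arbitrary element of $S(G)$ has the form $[\sum_j 1_{A_j}]$ for compact open sets $A_j$, and using the $m$ disjoint "copies" of the unit space provided by $\mc R_m$ one can realize such a sum as the class of a single indicator function $[1_{\tilde A}]$ of a compact open subset $\tilde A\subseteq (G^m)^{(0)}=G^{(0)}\times\{1,\dots,m\}$ (choosing $m$ large enough to accommodate both $x$ and $y$). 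Under this identification, invariant measures on $G^{(0)}$ correspond naturally to invariant measures on $(G^m)^{(0)}$, and the hypothesis $(n+1)x\le ny$ translates to a measure inequality $\mu(\tilde A)<\mu(\tilde B)$ for all relevant $\mu\in UM(G^m)$ (the strict inequality coming from a standard trick: the condition $(n+1)[1_{\tilde A}]\le n[1_{\tilde B}]$ forces $(n+1)\mu(\tilde A)\le n\mu(\tilde B)$, whence $\mu(\tilde A)<\mu(\tilde B)$ whenever $0<\mu(\tilde B)<\infty$). Dynamical comparison for $G^m$ then yields $\tilde A\precsim\tilde B$, i.e. $[1_{\tilde A}]\le[1_{\tilde B}]$ in $S(G^m)$, which descends to $x\le y$ in $S(G)$ since $S(G^m)\cong S(G)$ (the amplification by a full equivalence relation does not change the type semigroup, as it is a groupoid-equivalence invariant).

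For the converse, under the additional hypothesis that $G$ is second countable, I would assume $S(G)$ is almost unperforated and deduce stable dynamical comparison. Since almost unperforation of $S(G)$ is inherited by $S(G^m)\cong S(G)$, it suffices to show that almost unperforation of the type semigroup implies dynamical comparison for $G$ itself. So take nonempty compact open $A,B\subseteq G^{(0)}$ with $A\subseteq r(GB)$ and $\mu(A)<\mu(B)$ for every $\mu\in UM(G)$ with $0<\mu(B)<\infty$; I want $[1_A]\le[1_B]$. The strategy is to show that the measure-theoretic strict inequality, together with the compactness of $A$ and $B$, forces an algebraic comparison $(n+1)[1_A]\le n[1_B]$ for some $n$, at which point almost unperforation gives $[1_A]\le[1_B]$, which is exactly $A\precsim B$.

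The main obstacle, and the crux of the converse, is passing from the pointwise strict measure inequality to the purely algebraic inequality $(n+1)[1_A]\le n[1_B]$. Here is where second countability and Lemma~\ref{lem:RS-forgroupoids} are essential: that lemma guarantees that every unnormalized state $f\in F(S(G))$ (equivalently, every finitely additive invariant measure on $\mathbb K$) extends to a genuine invariant Borel measure $\mu_f\in UM(G)$. This lets me argue contrapositively at the level of states. If no such $n$ existed, then a compactness/separation argument in the cone $F(S(G))$ would produce a state $f$ normalized at $[1_B]$ with $f([1_A])\ge f([1_B])=1$; extending $f$ to an honest invariant measure $\mu_f$ via Lemma~\ref{lem:RS-forgroupoids} and using that $B$ is a compact open $G$-full-type set so that $0<\mu_f(B)<\infty$, this would contradict the hypothesis $\mu_f(A)<\mu_f(B)$. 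The delicate points to handle carefully are: (i) the functional-analytic separation argument extracting the offending state, which relies on the inequalities $(n+1)[1_A]\not\le n[1_B]$ holding for \emph{all} $n$ and on a Hahn--Banach / weak-$*$ compactness argument applied to the appropriate slice of $F(S(G))$; and (ii) ensuring the extended measure $\mu_f$ satisfies $0<\mu_f(B)<\infty$ so that it is an admissible test measure in the definition of dynamical comparison. I expect step (i) — correctly setting up and invoking the state-extraction argument so that it interfaces cleanly with the measure-extension of Lemma~\ref{lem:RS-forgroupoids} — to be the technical heart of the proof.
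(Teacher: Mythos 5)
Your proposal is correct and follows essentially the same route as the paper: in the forward direction the paper does exactly what you describe, passing to $G^m$, representing $x,y$ as $[1_{\tilde A}],[1_{\tilde B}]$, and deriving the strict measure inequality (it also notes, as you should, that $(n+1)[1_{\tilde A}]\le n[1_{\tilde B}]$ ``clearly'' yields the other hypothesis $\tilde A\subseteq r(G^m\tilde B)$ of dynamical comparison). For the converse, the paper likewise uses $A\subseteq r(GB)$ to get $[1_A]\le m[1_B]$, invokes Lemma~\ref{lem:RS-forgroupoids} to turn the measure hypothesis into $f([1_A])<f([1_B])$ for all $f\in F(S(G),[1_B])$, and then obtains $(k+1)[1_A]\le k[1_B]$ by citing \cite[Proposition~2.1]{OPR12} --- which is precisely the Hahn--Banach/weak-$*$ compactness state-extraction argument you identify as the technical heart, so that step can simply be quoted rather than reproved.
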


 \begin{proof}
Suppose that $G$ satisfies stable dynamical comparison, and let $x,y\in S(G)$ be such that $(k+1)x\le ky$. Then there is some $m$ such that $x,y$ are represented by compact open subsets of 
$(G^m)^{(0)}$. Therefore since we are assuming that $G^m$ has dynamical comparison, we may assume that $m=1$. With this assumption we have $x=[1_A]$ and $y=[1_B]$, where $A$ and $B$ are compact open subsets of $G^{(0)}$.
Now we clearly have $A\subseteq r(GB)$ and $\mu (A) <  \mu (B)$ for each $\mu \in UM(G)$ such that $0<\mu (B) <\infty$. It follows from dynamical comparison that $[1_A]\le [1_B]$, as desired.

Now suppose $G$ is second countable and let $A,B$ be compact open subsets of $G^{(0)}$ such that $A\subseteq r(GB)$. Then there are compact open bisections $V_1,V_2,\dots ,V_m$ in $G$ such that $A=\bigsqcup_{i=1}^m r(V_i)$ and
$s(V_i)\subset B$. Therefore we get $[1_A]\le m[1_B]$. Assume in addition that $\mu (A) < \mu (B)$ for every measure $\mu$ such that $\mu (B)=1$.
Then by Lemma \ref{lem:RS-forgroupoids} we get that $f([1_A]) < f([1_B])$ for all $f\in F(S(G), [1_B])$. Now it follows from \cite[Proposition 2.1]{OPR12} that
there is some $k\in \N$ such that $(k+1)[1_A]\le k[1_B]$. Since $S(G)$ is almost unperforated, we get that $[1_A]\le [1_B]$, as desired.
The proof for $G^m$ is similar. (Note that $S(G^m)=S(G)$.)
   \end{proof} 

The above result begs the following natural question:
\begin{question}\label{Question}
	Is stable dynamical comparison equivalent to dynamical comparison?
\end{question}

The remainder of this section is dedicated to provide an affirmative answer of this question in the minimal setting, which then leads to the proof of Theorem \ref{Thm:A}.
In fact, in Proposition \ref{Lem:Weak comparison} we show that in the minimal setting, both notions are also equivalent to the following:

 \begin{defi}[\cite{1712.05129}]
     \label{def:weakdyncomparison}
    {\rm Let $G$ be an ample minimal groupoid with compact unit space. We say that $G$ satisfies {\it weak dynamical comparison} if 
     there exists a constant $C\geq 1$ such that whenever 
    $A,B\subseteq G^{(0)}$ are non-empty compact open subsets satisfying $\sup_{\mu\in M(G)} \mu(A)<\frac{1}{C}\inf_{\mu\in M(G)}\mu(B)$,  then $A\precsim B$.}
         \end{defi}
\begin{rem}\label{dc=wdc}
    For any ample groupoid $G$ with compact unit space, the set $M(G)$ of $G$-invariant regular Borel probability measures  
    is compact in the weak*-topology.
    Consequently, if $A$ and $B$ are compact open sets such that $\mu(A)<\mu(B)$ for all $\mu\in M(G)$, we can use the continuity of the function $\mu\mapsto \mu(B)-\mu(A)$ to see 
    that 
    $\inf_{\mu\in M(G)}(\mu(B)-\mu(A))\geq \varepsilon$ for some suitably small $\varepsilon >0$.
    \end{rem}
         
Before we state and prove the desired equivalence between the different notions in Proposition \ref{Lem:Weak comparison}, let us make the following elementary observation which plays a crucial role in the proof:
 \begin{lem}\label{Lem:Dimension Reduction}
	Let $G$ be a minimal ample groupoid with compact unit space. Suppose $G$ has weak dynamical comparison, with constant $C\ge 1$. Then, whenever $m\in \N$ and $A_1,\ldots, A_m,B\subseteq G^{(0)}$ are compact open subsets such that $$ \sup_{\mu \in M(G)} \Big(\sum_{i=1}^m \mu(A_i)\Big) < \frac{1}{2^{m-1}C} \inf_{\mu \in M(G)} \mu(B)$$ for all $\mu\in M(G)$, then $\bigsqcup_{i=1}^m A_i \times \lbrace i\rbrace \precsim B \times \lbrace 1\rbrace$.
\end{lem}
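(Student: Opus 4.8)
The plan is to induct on $m$ by \emph{peeling off} one set at a time, using weak dynamical comparison on $G$ together with two elementary facts: one about the product groupoid $G^m=G\times\mathcal{R}_m$, and one about invariant measures. First I record these facts. Since $\mathcal{R}_m$ is the full equivalence relation on $\{1,\dots,m\}$, for any compact open bisection $V$ of $G$ and any index $i$ the set $V\times\{(1,i)\}$ is a compact open bisection of $G^m$ with source $s(V)\times\{i\}$ and range $r(V)\times\{1\}$. Consequently, whenever $A\precsim B'$ holds in $G$ (witnessed by bisections $V_1,\dots,V_k$), the same witnesses tensored with $\{(1,i)\}$ show $A\times\{i\}\precsim B'\times\{1\}$ in $G^m$; changing the level is ``free''. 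Secondly, if $A\precsim B'$ is witnessed by $V_1,\dots,V_k$ with $A=\bigsqcup_j s(V_j)$ and the ranges $r(V_j)\subseteq B'$ pairwise disjoint, then the consumed clopen set $R:=\bigsqcup_j r(V_j)$ satisfies $\mu(R)=\sum_j\mu(r(V_j))=\sum_j\mu(s(V_j))=\mu(A)$ for \emph{every} $\mu\in M(G)$, because each such $\mu$ is invariant under the bisections $V_j$. Thus realizing a subequivalence into $B'$ removes a piece of $B'$ whose measure equals $\mu(A)$ for all invariant measures simultaneously.

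Write $\alpha:=\sup_{\mu\in M(G)}\sum_{i=1}^m\mu(A_i)$ and $\beta:=\inf_{\mu\in M(G)}\mu(B)$; the hypothesis reads $\alpha<\beta/(2^{m-1}C)$. Since $M(G)$ is weak$^*$-compact, $\beta$ is attained, and by minimality the invariant measures have full support, so $\beta>0$. As $C\ge 1$, for $m\ge 2$ the hypothesis gives $2C\alpha<\beta$ and hence $C\alpha<\beta-\alpha$. The case $m=1$ is immediate: the hypothesis gives $\sup_\mu\mu(A_1)<\tfrac1C\inf_\mu\mu(B)$, so $A_1\precsim B$ by weak comparison, and the level-change fact yields $A_1\times\{1\}\precsim B\times\{1\}$. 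For $m\ge 2$ I process $A_m,A_{m-1},\dots,A_1$ in turn, maintaining a shrinking clopen ``target'' $C_0=B\supseteq C_1\supseteq\cdots$. Having removed the pieces already used for $A_m,\dots,A_{i+1}$, the current target $C$ satisfies $\mu(C)=\mu(B)-\sum_{j>i}\mu(A_j)$ by the measure-preservation fact, whence $\inf_\mu\mu(C)\ge\beta-\alpha>0$; in particular $C$ is nonempty. Since $\sup_\mu\mu(A_i)\le\alpha<\tfrac1C(\beta-\alpha)\le\tfrac1C\inf_\mu\mu(C)$, weak comparison gives $A_i\precsim C$, witnessed by bisections with pairwise disjoint ranges forming a clopen set $R_i\subseteq C$; I then replace the target by $C\setminus R_i$ and continue (empty $A_i$ are simply skipped).

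Finally I assemble the pieces. By the level-change fact each step produces witnesses in $G^m$ exhibiting $A_i\times\{i\}\precsim R_i\times\{1\}$, and by construction the $R_i$ are pairwise disjoint clopen subsets of $B$. The sources $A_i\times\{i\}$ lie in distinct levels, while the ranges lie in the disjoint sets $R_i\times\{1\}\subseteq B\times\{1\}$, so taking the union of all these witnesses exhibits $\bigsqcup_{i=1}^m A_i\times\{i\}\precsim B\times\{1\}$, as required. The step needing the most care is the bookkeeping that the target never loses too much measure: everything hinges on the observation that a subequivalence consumes range-measure exactly equal to $\mu(A_i)$ for all $\mu$ at once, which keeps $\inf_\mu\mu(C)\ge\beta-\alpha$ at every stage and lets a single, $m$-independent margin $C\alpha<\beta-\alpha$ drive the whole induction (so the stated bound $\beta/(2^{m-1}C)$ in fact leaves far more room than this argument consumes).
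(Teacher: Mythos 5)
Your proof is correct, and its overall strategy coincides with the paper's: peel off one $A_i$ at a time using weak dynamical comparison, exploiting the fact that a subequivalence $A_i\precsim C$ consumes a clopen piece $R_i$ of the target with $\mu(R_i)=\mu(A_i)$ for \emph{every} $\mu\in M(G)$ (the paper uses this same fact in the guise of the identity $\mu(A_m')=\mu(A_m)$). Where you genuinely differ is the bookkeeping, and your version is sharper. The paper runs a literal induction on $m$ whose statement carries the constant $2^{m-1}C$: after absorbing $A_m$ into $B$, it verifies that $A_1,\ldots,A_{m-1}$ and $B\setminus A_m'$ satisfy the hypothesis at level $m-1$ with constant $2^{m-2}C$, so the loss of a factor $2$ per step is exactly what its induction scheme consumes. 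You instead maintain a single uniform margin: writing $\alpha=\sup_\mu\sum_i\mu(A_i)$ and $\beta=\inf_\mu\mu(B)$, exact measure bookkeeping keeps $\inf_\mu\mu(C)\geq \beta-\alpha$ for the shrinking target at every stage, so the one inequality $C\alpha<\beta-\alpha$ (equivalently $\alpha<\beta/(C+1)$, which follows from $\alpha<\beta/(2C)$ since $C\geq 1$) drives all $m$ steps. This shows the exponential factor $2^{m-1}$ in the hypothesis is an artifact of the paper's induction rather than a necessity: the lemma already holds with $2^{m-1}C$ replaced by $C+1$. Since Proposition~\ref{Lem:Weak comparison} only invokes the lemma with the stated (stronger) hypothesis, either argument suffices there, but yours yields a strictly stronger statement. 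Two cosmetic remarks: your appeal to minimality to get $\beta>0$ is unnecessary, since $\beta>2^{m-1}C\alpha\geq 0$ is immediate from the strict inequality in the hypothesis; and your reduction of subequivalences in $G^m$ to subequivalences in $G$ via the bisections $V\times\{(1,i)\}$ is a clean way to make precise what the paper leaves implicit in its final assembly step.
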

\begin{proof}
	The proof proceeds by induction on the number of levels $m$. The case $m=1$ is immediate from the fact that $G$ has weak dynamical comparison.
	Now if $m>1$ and the hypothesis of the lemma are fullfiled, then we have 
	$$ \sup_{\mu\in M(G)} \mu(A_m)\le \sup_{\mu \in M(G)} \sum_{i=1}^m \mu (A_i) < \frac{1}{2^{m-1}C} \inf_{\mu \in M(G)} \mu(B) < \frac{1}{C} \inf_{\mu\in M(G)} \mu (B),$$ so by weak dynamical comparison there exists $A_m'$ such that $A_m\sim A_m'\subseteq B$. 
	
	For each $\mu \in M(G)$, we have, by our assumption, 
	\begin{align*}
	\frac{1}{2^{m-2}C} \mu (B\setminus A_m') & = 	\frac{1}{2^{m-2}C} \mu (B) - \frac{1}{2^{m-2}C} \mu(A_m) \\
	& \ge \frac{1}{2^{m-2}C} \mu (B) - \mu(A_m)\\
	&  \ge \frac{1}{2^{m-2}C} \mu (B) - \frac{1}{2^{m-1}C} \mu (B)\\
	&  = \frac{1}{2^{m-1}C} \mu (B) \ge \frac{1}{2^{m-1}C}\inf_{\mu'  \in M(G)}  \mu' (B).
	\end{align*} 
	Hence we obtain
	$$\sup_{\mu \in M(G)} \sum_{i=1}^{m-1} \mu (A_i) <  \frac{1}{2^{m-1}C} \inf_{\mu \in M(G)} \mu (B) \le \frac{1}{2^{m-2}C} \inf_{\mu \in M(G)} \mu(B\setminus A_m'),$$
	Thus, we can apply the induction hypothesis so conclude that $\sqcup_{i=1}^{m-1} A_i\times \lbrace i\rbrace \precsim B\setminus A_m'\times \lbrace 1\rbrace$. Since we also had $A_m\sim A_m'\subseteq B$, the result follows.
\end{proof}

\begin{prop}\label{Lem:Weak comparison}
    Let $G$ be a $\sigma$-compact ample groupoid which is minimal and has a compact unit space. Then the following conditions are equivalent: 
    \begin{enumerate}
     \item $G$ satisfies stable dynamical comparison.
     \item $G$ satisfies dynamical comparison.
     \item $G$ satisfies weak dynamical comparison.
    \end{enumerate}
\end{prop}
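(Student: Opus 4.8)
The plan is to prove the cycle $(1)\Rightarrow(2)\Rightarrow(3)\Rightarrow(1)$. The implication $(1)\Rightarrow(2)$ is immediate, since stable dynamical comparison includes the case $m=1$, i.e. dynamical comparison of $G=G^1$ itself. For $(2)\Rightarrow(3)$ I would show that the constant $C=1$ witnesses weak dynamical comparison. Suppose $A,B$ are non-empty compact open sets with $\sup_{\mu\in M(G)}\mu(A)<\inf_{\mu\in M(G)}\mu(B)$; then $\mu(A)<\mu(B)$ for every $\mu\in M(G)$. By minimality the saturation $r(GB)$ is a non-empty open invariant set, hence equals $G^{(0)}$, so $A\subseteq r(GB)$ holds automatically. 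It remains to upgrade the inequality to all of $UM(G)$: if $\mu\in UM(G)$ satisfies $0<\mu(B)<\infty$, then covering the compact unit space by finitely many range-translates of clopen subsets of $B$ (possible by minimality and compactness) forces $\mu(G^{(0)})<\infty$, so after normalising $\mu$ lies in $M(G)$ and $\mu(A)<\mu(B)$ follows. Dynamical comparison then yields $A\precsim B$.

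For the crucial implication $(3)\Rightarrow(1)$ I must show that $G^m=G\times\mathcal{R}_m$ has dynamical comparison for every $m$. First I would record the structural facts about the product: the full equivalence relation $\mathcal{R}_m$ mixes the $m$ levels of $(G^m)^{(0)}=G^{(0)}\times\{1,\dots,m\}$, so its invariant subsets are exactly $S\times\{1,\dots,m\}$ for $G$-invariant $S$; hence $G^m$ is again minimal and $r(G^m\mathcal B)=(G^m)^{(0)}$, making the containment hypothesis automatic. The same mixing shows that an invariant measure on $(G^m)^{(0)}$ assigns the same mass $\mu$ to each level, so $UM(G^m)\cong UM(G)$, and the measure hypothesis on $\mathcal A=\bigsqcup_i A_i\times\{i\}$ and $\mathcal B=\bigsqcup_j B_j\times\{j\}$ becomes $\sum_i\mu(A_i)<\sum_j\mu(B_j)$ for all $\mu\in M(G)$; by compactness of $M(G)$ (Remark~\ref{dc=wdc}) this upgrades to a uniform gap $\sum_j\mu(B_j)-\sum_i\mu(A_i)\ge\varepsilon$.

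With the uniform gap in hand, I would build the subequivalence $\mathcal A\precsim\mathcal B$ by exhaustion, using Lemma~\ref{Lem:Dimension Reduction} as the engine. At each stage I pack a portion of the remaining source (spread over its $m$ levels) into a single fresh level drawn from the remaining room of $\mathcal B$; Lemma~\ref{Lem:Dimension Reduction} permits this whenever the source mass lies below the fixed fraction $1/(2^{m-1}C)$ of that room. Crucially, packing a piece removes equal $\mu$-mass from the source and from the available room, so the gap $\varepsilon$ is preserved exactly; consequently the available room never drops below $\varepsilon$, while the unused room decreases by a definite fraction at each stage and hence decays geometrically. This forces the unpacked source to be exhausted after finitely many stages, giving $\mathcal A\precsim\mathcal B$, i.e. $[1_{\mathcal A}]\le[1_{\mathcal B}]$ in $S(G^m)=S(G)$.

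The main obstacle is precisely this packing step: Lemma~\ref{Lem:Dimension Reduction} delivers only a comparison with the wasteful constant $2^{m-1}C$, whereas dynamical comparison must succeed from a merely positive measure gap, and its single-level target does not match the multi-level shape of $\mathcal B$. Overcoming this requires the exhaustion scheme above together with careful uniform bookkeeping over the compact parameter space $M(G)$ and across the $m$ levels — using minimality to subdivide the unit space into arbitrarily fine compact open pieces and to extract target regions of controlled invariant measure — so that the hypothesis of Lemma~\ref{Lem:Dimension Reduction} can be verified simultaneously for all invariant measures at every stage.
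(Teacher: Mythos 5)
Your implications $(1)\Rightarrow(2)$ and $(2)\Rightarrow(3)$ are essentially fine, and your plan for the crucial implication $(3)\Rightarrow(1)$ --- iterated packing of the source into the remaining room via Lemma~\ref{Lem:Dimension Reduction}, with the measure gap preserved at each stage --- is genuinely different from the paper's proof. The paper instead uses $\sigma$-compactness to enumerate countably many compact open bisections $(V_n)$ covering $G^m$, greedily matches $A$ into $B$ along the associated homeomorphisms $\theta_n$, shows by an ergodicity argument that the unmatched remainder $A_0$ is null for every invariant measure, invokes Dini's theorem to find a finite stage at which the unmatched part of $A$ is \emph{uniformly} small over $M(G)$, and then applies Lemma~\ref{Lem:Dimension Reduction} exactly once to absorb that leftover into the remaining room (whose measure stays at least $\varepsilon$). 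Your scheme replaces the matching/ergodicity/Dini package by repeated applications of the lemma; notably, if completed, it would avoid the $\sigma$-compactness hypothesis altogether, since that hypothesis enters the paper's argument only through the countable cover by bisections.

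As written, however, there are genuine gaps. First, your termination reasoning is incorrect: the room cannot both "never drop below $\varepsilon$" and "decay geometrically", and in any case shrinkage of the \emph{room} says nothing about exhaustion of the \emph{source}; moreover, source pieces chosen to have small supremum-measure may have zero infimum-measure, so no "definite fraction" of the source is removed per stage for a fixed $\mu$. The correct mechanism is to partition the source \emph{up front} into finitely many compact open pieces $\mathcal{C}_1,\dots,\mathcal{C}_N$ with $\sup_{\mu\in M(G)}\mu(\mathcal{C}_k)<\varepsilon/(2^m mC)$ and run exactly $N$ stages. Second, the existence of such a partition is the real crux, and "minimality" alone does not deliver it: you must prove that every $\mu\in M(G)$ is atomless (true when $G^{(0)}$ is infinite, since by regularity and invariance the mass of a point is constant along its dense, hence infinite, orbit; the finite unit space case must be split off and treated directly), and then run a Dini-type compactness argument over $M(G)$, applied to the decreasing net of compact open neighbourhoods of each point, to obtain neighbourhoods that are small \emph{simultaneously} for all invariant measures. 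Third, your "single fresh level" target requires the nesting rearrangement the paper uses: rewrite the remaining room as $\bigsqcup_{j} D_j\times\{j\}$ with $D_1\supseteq\cdots\supseteq D_m$, so that $\inf_{\mu\in M(G)}\mu(D_1)\geq \varepsilon/m$; a per-measure pigeonhole over the $m$ levels produces no single level valid for all $\mu$ at once. (A smaller elision, which the paper also makes: in $(2)\Rightarrow(3)$ a finite invariant Borel measure need not be regular, so before normalizing into $M(G)$ one should pass to its Riesz regularization, which is invariant and agrees with it on compact open sets.) With these three points filled in, your argument does go through.
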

    \begin{proof}
It is clear that ($1)\implies(2) \implies $(3).

(3)$\implies $(1).  Suppose that $G$ satisfies weak dynamical comparison and let $C\geq 1$ be such that whenever 
    $A,B\subseteq G^{(0)}$ are non-empty compact open subsets satisfying $\sup_{\mu\in M(G)} \mu(A)<\frac{1}{C}\inf_{\mu\in M(G)}\mu(B)$,  then $A\precsim B$.
    Given $m\ge 1$, we show that $G^m$ has dynamical comparison.

    Observe that, up to normalization, we can identify $M(G)$ and $M(G^m)$. Hence we will work with $M(G)$, with the understanding that 
each $\mu \in M(G)$ gives rise to the invariant measure on $(G^m)^{(0)}$ defined by
	$$\mu \Big( \bigsqcup_{i=1} ^m A_i\times \{ i\}\Big)  = \sum_{i=1} ^m \mu (A_i)$$
	for Borel subsets $A_i$ of $G^{(0)}$. 
	
    Suppose $A$ and $B$ are non-empty compact open subsets of $(G^m)^{(0)}$ such that $\mu(A)<\mu(B)$ for every $\mu\in M(G)$. By Remark \ref{dc=wdc}, there is $1\ge \varepsilon >0$  such that 
    $$\inf_{\mu\in M(G)}(\mu(B)-\mu(A))\geq \varepsilon.$$
    Since $G^m$ is $\sigma$-compact, we can find a countable cover $(V_n)_n$ of $G^m$ by compact open bisections. Let us use the shorthand notation $\theta_n$ for the corresponding homeomorphism $\theta_{V_n}:s(V_n)\rightarrow r(V_n)$.
    Now let $A_1=A\cap \theta_1^{-1}(B)$ and $B_1=\theta_1(A_1)$. For each $n>1$, define inductively
    $$A_n=(A\setminus(\bigcup_{i=1}^{n-1} A_i))\cap(\theta_n^{-1}(B\setminus\bigcup_{i=1}^{n-1} B_i)),$$
    and $B_n=\theta_n(A_n)$. Then all the sets $A_n$ and $B_n$ are (possibly empty) compact open disjoint subsets of $A$ and $B$ respectively. Moreover, we have $\mu(A_n)=\mu(B_n)$ for every $n\in\N$ and every $G$-invariant measure $\mu$.
    Consider the remainder sets
    $A_0=A\setminus (\bigcup_{n\geq 1} A_n)$ and $B_0=B\setminus (\bigcup_{n\geq 1} B_n)$.
    We clearly have $\mu(B_0)\geq \varepsilon$ for all $\mu\in M(G)$. Note also, that by construction, whenever $s(g)\in A_0$ for some $g\in G^m$, then $r(g)$ can not be an element of $B_0$. 
    So $r(G^mA_0)\subseteq (G^m)^{(0)}\setminus B_0$, or equivalently, $B_0\subseteq (G^m)^{(0)}\setminus r(G^mA_0)$. We claim that $\mu(A_0)=0$ for all $\mu\in M(G)$. It is enough to consider ergodic measures\footnote{Recall that $\mu\in M(G)$ is called \emph{ergodic} if every Borel $G$-invariant subset $E\subseteq G^{(0)}$ either has $\mu(E)=0$ or $\mu(G^{(0)}\backslash E)=0$. Similar to the group action case (see e.g. \cite[Theorem~8.1.8]{GKPT}), $\mu\in M(G)$ is ergodic if and only if it is an extreme point in $M(G)$. As its proof is essentially a repetition of the arguments used in the proof of \cite[Theorem~8.1.8]{GKPT}, we omit the details here.}.

    If we suppose $\mu(A_0)>0$, then $\mu(r(G^mA_0))>0$ and by ergodicity it follows that $\mu(B_0)\leq \mu((G^m)^{(0)}\setminus r(G^mA_0))=0$, a contradiction.
    Now for a fixed $\mu$ we have
    $$\lim_{n\rightarrow \infty}\mu(A\setminus(\bigcup_{i=1}^{n} A_i))=0.$$
    Since the limit is decreasing and the above measure values viewed as functions on the set $M(G)$ are continuous, the above convergence is uniform on $M(G)$ by Dini's Theorem. 
    Now let $\delta<\frac{\varepsilon}{2^{m-1}mC}$. Using the uniform convergence we conclude that there exists an $n_0$ such that for all $\mu\in M(G)$ we have
    $$\mu(A\setminus(\bigcup_{i=1}^{n_0} A_i))\leq \delta <\frac{\varepsilon}{2^{m-1}mC}\leq \frac{1}{2^{m-1}mC}\mu(B\setminus(\bigcup_{i=1}^{n_0} B_i)).$$
    We can further arrange $B\setminus(\bigcup_{i=1}^{n_0} B_i) \sim \bigsqcup_{j=1}^m D_j\times \{ j \}$ for clopen subsets $D_j$ of $G^{(0)}$
    such that $D_1\supseteq D_2\supseteq \cdots \supseteq D_m$. Then we get for every $\mu \in M(G)$ that 
    $$ \mu(A\setminus(\bigcup_{i=1}^{n_0} A_i))\leq \delta <\frac{\varepsilon}{2^{m-1}mC}\leq \frac{1}{2^{m-1}mC}\mu(B\setminus(\bigcup_{i=1}^{n_0} B_i)) \le \frac{1}{2^{m-1}C} \mu (D_1)$$
    and hence we can pass to the supremum on the left and infimum on the right to get
    $$\sup_{\mu \in M(G)} \mu(A\setminus(\bigcup_{i=1}^{n_0} A_i))\leq \delta< \frac{\varepsilon}{2^{m-1}mC}\leq \frac{1}{2^{m-1}C}\inf_{\mu\in M(G)}\mu(D_1).$$
    Now write $A\setminus(\bigcup_{i=1}^{n_0} A_i) = \bigsqcup_{j=1}^m C_j\times \{ j \}$, with $C_j$ a clopen subset of $G^{(0)}$. From the last inequality we conclude that
     $$ \sup_{\mu \in M(G)}\Big( \sum_{j=1}^m  \mu(C_j)\Big) =\sup_{\mu \in M(G)} \mu(A\setminus(\bigcup_{i=1}^{n_0} A_i))< \frac{1}{2^{m-1}C}\inf_{\mu\in M(G)}\mu(D_1).$$
    Now apply Lemma \ref{Lem:Dimension Reduction} to conclude that $A\setminus(\bigcup_{i=1}^{n_0} A_i) =\bigsqcup_{j=1}^m C_j \times \{j\} \precsim D_1\times \{1\}$. Since we also have
    $D_1 \times \{1\} \precsim  \bigsqcup_{j=1}^m D_j\times \{ j \} \sim  B\setminus (\bigcup_{i=1}^{n_0}B_i)$
    and $\bigcup_{i=1}^{n_0} A_i\precsim \bigcup_{i=1}^{n_0} B_i,$ we obtain $A\precsim B$ as desired.
\end{proof}

We can finally prove Theorem  \ref{Thm:A} stated in the introduction.

\begin{proof}[Proof of Theorem \ref{Thm:A}]
	Let us first assume that $G^{(0)}$ is compact. By Proposition \ref{Lem:Weak comparison}, dynamical comparison implies stable dynamical comparison for $G$. Hence the result follow from Lemma~\ref{lem:a-unp-implies-dyn-comparison}.
	
	If $G^{(0)}$ is just locally compact, we pick a compact open subset $K\subseteq G^{(0)}$. Since $G$ is minimal, $G$ and the restriction $G|_K$ are Morita equivalent. In particular, $G|_K$ is minimal itself and still has dynamical comparison. Indeed, suppose $A,B\subseteq K$ such that $\mu(A)<\mu(B)$ for all $\mu\in M(G|_K)$. Then if $\nu\in UM(G,B)$, by minimality of $G$ and compactness of $K$, $0<\nu(K)<\infty$. So the measure $\frac{1}{\nu(K)}\nu_{\mid K}\in M(G|_K)$. Hence $\nu(A)<\nu(B)$. Since $\nu$ was arbitrary we can use dynamical comparison for $G$ to conclude $A\precsim B$ in $G$. But then the compact open bisections implementing this subequivalence have range and source in $K$ since $A,B\subseteq K$. So we actually get $A\precsim B$ in $G|_K$ as desired. The result now follows from the fact that $S(G|_K)\cong S(G)$ \cite[Corollary 5.8]{rainone_sims}, and the first paragraph of this proof.
\end{proof}

\subsection{Absence of invariant measures}
We call a measure $\mu\in UM(G)$ trivial provided that $\mu(A)\in\lbrace 0,\infty\rbrace$ for all compact open subsets $A\subseteq G^{(0)}$.
In this short section  we characterize almost unperforation of the type semigroup when every measure in $UM(G)$ is trivial.

Recall that an element $x$ of a semigroup $S$ is called {\it properly infinite} if $2x\leq x$. 

\begin{prop}\label{Prop:purely infinite order units}
	Let $G$ be an ample second countable groupoid such that every measure in $UM(G)$ is trivial. Then $G$ has dynamical comparison if and only if every element in $S(G)$ is properly infinite. 
\end{prop}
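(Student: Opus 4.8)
The plan is to begin by disposing of the measure condition. Since every $\mu\in UM(G)$ is trivial, no compact open set $B$ admits a measure with $0<\mu(B)<\infty$, so the side hypothesis ``$\mu(A)<\mu(B)$ for all $\mu$ with $0<\mu(B)<\infty$'' in the definition of dynamical comparison is vacuously satisfied. Hence I would first show that, under the standing assumption, $G$ has dynamical comparison if and only if $(\star)$: for all nonempty compact open $A,B\subseteq G^{(0)}$ with $A\subseteq r(GB)$ one has $A\precsim B$. Throughout I use the dictionary $A\precsim B\Leftrightarrow [1_A]\le[1_B]$ in $S(G)$.

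With this reformulation the implication ``properly infinite $\Rightarrow$ dynamical comparison'' is immediate. Given $A\subseteq r(GB)$, the compactness argument from the proof of Lemma~\ref{lem:a-unp-implies-dyn-comparison} produces $m$ with $[1_A]\le m[1_B]$; since $[1_B]$ is properly infinite we have $2[1_B]\le[1_B]$, hence $n[1_B]\le[1_B]$ for all $n$ by induction, and therefore $[1_A]\le[1_B]$, i.e. $A\precsim B$. This yields $(\star)$.

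For the converse I would reduce to single sets: the properly infinite elements together with $0$ form a submonoid of $S(G)$ (from $2x\le x$ and $2y\le y$ one gets $2(x+y)\le x+y$), and $S(G)$ is generated by the classes $[1_A]$, so it suffices to prove that each $[1_A]$ is properly infinite. First I record that triviality forces every orbit to be infinite, since a finite orbit would support a nonzero finite invariant counting measure. The strategy is then to realize a doubling of $A$ inside its saturation and feed it into $(\star)$: if one can find two disjoint full compact open subsets $B_1,B_2\subseteq A$ (with respect to the restriction $G|_U$ to the open invariant saturation $U=r(GA)$, where triviality is inherited and $A$ is $U$-full), then $(\star)$ gives $A\precsim B_1$ and $A\precsim B_2$, whence $2[1_A]\le[1_{B_1}]+[1_{B_2}]=[1_{B_1\sqcup B_2}]\le[1_A]$. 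The cleanest instance is when $G|_U$ is minimal and $A$ has at least two points: then any clopen splitting $A=B_1\sqcup B_2$ into nonempty pieces works, because minimality makes every nonempty clopen set full. (A clopen point is handled instead by using two distinct points of its infinite orbit as disjoint copies of $A$.)

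The hard part will be constructing the doubling in the general, non-minimal case, and this is the step I expect to be the main obstacle. It is worth noting at the outset that the purely algebraic machinery does not suffice: triviality kills all normalized states, so $F(S(G),[1_A])=\emptyset$, and \cite[Proposition~2.1]{OPR12} then only delivers some $k$ with $(k+1)[1_A]\le k[1_A]$, which shows that $k[1_A]$ is properly infinite but not that $[1_A]$ is. The splitting must therefore be produced dynamically, combining $(\star)$, the infiniteness of orbits, compactness of $A$ and second countability. When $A$ fails to be invariant there is room in $U\setminus A$ and one can translate $A$ onto a disjoint comparable set; the genuinely delicate situation is $A$ invariant, so that $U=A$ is compact with no invariant probability measure and no two disjoint copies can fit. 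Here one is forced to cut $A$ into two disjoint full clopen pieces, i.e. to build a clopen two-colouring of $A$ meeting every orbit, via a compactness and second-countability exhaustion; I expect this construction to be where the real technical work concentrates.
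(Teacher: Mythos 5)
Your preliminary reductions are sound: under the triviality hypothesis the measure clause in Definition \ref{def:dyn-comparison-nonminimal} is vacuous, so dynamical comparison is equivalent to your $(\star)$; the implication ``properly infinite $\Rightarrow$ dynamical comparison'' is correct and is essentially the paper's argument; and the reduction of the other implication to the classes $[1_A]$ (properly infinite elements together with $0$ form a submonoid, and triviality passes to the restriction $G_{r(GA)}$) is legitimate. But the entire mathematical content of the forward direction is the doubling $2[1_A]\le[1_A]$, and that is exactly the step you leave open, saying you ``expect this construction to be where the real technical work concentrates.'' This is a genuine gap, and moreover the route you commit to is defective. Your dichotomy ``$A$ not invariant $\Rightarrow$ translate $A$ onto a disjoint comparable set'' fails: if a single orbit is entirely contained in $A$ --- which is perfectly compatible with $A$ being full and not invariant, e.g.\ $G=H\sqcup(H\times\mathcal{R})$ for a minimal purely infinite $H$ with compact unit space $X$, $\mathcal{R}=\N^2$, and $A=X\sqcup(X\times\{1\})$ --- then for any $B$ with $A\precsim B$ the portion of $A$ on that orbit must be transported within that orbit, hence inside $A$, so no $B$ disjoint from $A$ can exist. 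And in your ``delicate'' invariant case, the clopen two-colouring of $A$ meeting every orbit is a marker-type lemma that you assert but do not prove. So neither branch of the proposal is actually carried out.

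The paper closes this gap with precisely the algebraic machinery you declare insufficient; your error is to consider only \cite[Proposition 2.1]{OPR12}. The actual argument runs: for $A$ full, Lemma \ref{lem:RS-forgroupoids} and triviality give $F(S(G),[1_A])=\emptyset$; Tarski's theorem \cite[Theorem~9.1]{Wagon} then makes some multiple of $[1_A]$ properly infinite, and the refinement-monoid result \cite[Theorem~4.3]{MR2806681} converts this into a decomposition $[1_A]=u+v$ with $u,v$ \emph{order units} of $S(G)$. This is a splitting in the monoid and requires no disjointness of supports: writing $u=[f]$, $v=[g]$, $U=\supp(f)$, $V=\supp(g)$, Lemma \ref{Lemma:Order units} says $U$ and $V$ are $G$-full, so dynamical comparison (with vacuous measure hypothesis) gives $[1_A]\le[1_U]$ and $[1_A]\le[1_V]$; since $[1_U]\le u$ and $[1_V]\le v$, one gets $2[1_A]\le[1_U]+[1_V]\le u+v=[1_A]$. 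Arbitrary elements are then handled through the order ideal they generate (Lemma \ref{Lemma:OrderIdeals} together with Lemma \ref{lem:dyn-comparison-forrestrictions} and extension of invariant measures by zero), close in spirit to your saturation step. In short, the splitting is produced \emph{algebraically} from the refinement property, and dynamical comparison enters only afterwards, to convert the order-unit property of $U$ and $V$ into the inequalities $[1_A]\le[1_U],[1_V]$; your claim that ``the splitting must therefore be produced dynamically'' is exactly the wrong turn.
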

\begin{proof}
	Suppose first that $G$ has dynamical comparison.
	We first consider the case of a $G$-full compact open subset $A\subseteq G^{(0)}$.
	Since every measure in $UM(G)$ is trivial it follows from Lemma \ref{lem:RS-forgroupoids} that $F(S(G),[1_A])=\emptyset$. By Tarski's Theorem (see for example \cite[Theorem~9.1]{Wagon}) we conclude that a multiple of $[1_A]$ is properly infinite. 
	By \cite[Theorem~4.3]{MR2806681} there exist order units $u,v\in S(G)$ such that $[1_A]=u+v$. Pick representative functions $u=[f]$ and $v=[g]$ and let $U=\supp(f)$ and $V=\supp(g)$. 
	Note, that the sets $U,V$ are both compact open and $G$-full subsets of $G^{(0)}$. Consequently, $[1_V]$ and $[1_U]$ are order units themselves by Lemma \ref{Lemma:Order units}, 
	so that $[1_A]\leq l [1_U]$ and $[1_A]\leq k[1_V]$ for some $k,l\ge 1$. This implies that $A\subseteq r(GU)\cap r(GV)$. Since there exist no $G$-invariant measures on $G^{(0)}$ which are non-trivial in $U$ or $V$, we may use dynamical comparison to conclude that in fact already $[1_A]\leq [1_U]$ and $[1_A]\leq [1_V]$.
	Putting everything together we compute
	$2[1_A]\leq [1_U]+[1_V]\leq u+v=[1_A]$ and reach our desired conclusion.
	
	Now if $[f]\in S(G)$ is an arbitrary order unit, write $f=\sum_{i=1}^n 1_{A_i}$ and let $A=\bigcup_{i=1}^n A_i$ be its support. Then $[1_A]$ is an order unit as well. Hence we can apply the first step above (multiple times) and conclude $2[f]\leq 2n[1_A]\leq [1_A]\leq [f]$.

	Suppose now that $[g]$ is an arbitrary element of $S(G)$. Let $I\subseteq S(G)$ be the order ideal generated by $[g]$. By Lemma \ref{Lemma:OrderIdeals} 
	we have that $I\cong S(G_U)$ for some $G$-invariant open subset $U\subseteq G^{(0)}$. 
	Note that $[g]$ is an order unit for $I=S(G_U)$, and that $G_U$ also has dynamical comparison (Lemma \ref{lem:dyn-comparison-forrestrictions}).
	Let $\mu  \in UM (G_U)$. Then we can extend $\mu $ to an invariant measure $\widetilde{\mu}\in UM(G)$ by the rule $\widetilde{\mu} (T)= \mu (T\cap U)$ for each
	Borel set $T$ of $G^{(0)}$. 
	It follows by our hypothesis that $\widetilde{\mu}$ is trivial and hence so is $\mu$.
	Therefore every measure in $UM(G_U)$ is trivial and so   
	it follows from the above argument that $[g]$ is properly infinite in $S(G_U)$ and hence also in $S(G)$.
	
	Conversely, assume that every element in $S(G)$ is properly infinite. Let $A,B\subseteq G^{(0)}$ be compact open subsets, such that $A\subseteq r(GB)$.
	Then $[1_B]$ is a properly infinite order unit in the order ideal $S(G_{r(GB)})$ of $S(G)$. It follows that there exists an $n\in \N$ such that $[1_A]\leq n[1_B]\leq [1_B]$, as desired.
		\end{proof}
\begin{rem}
Note that the equivalent properties in the previous proposition are also equivalent to every compact open subset of the unit space being $(2,1)$-paradoxical in the sense of \cite[Definition~4.5]{boenicke_li_2018}.
The reader might also want to compare these results with those obtained independently by Ma in section 5 of \cite{Ma21}.
\end{rem}

 The following generalizes \cite[Proposition~6.2]{Ma18} and gives an affirmative answer to Question \ref{Question} in the absence of interesting invariant measures.
\begin{prop}
	Let $G$ be an ample second countable groupoid such that every measure in $UM(G)$ is trivial. Then the type semigroup $S(G)$ is almost unperforated if and only if $G$ has dynamical comparison.
\end{prop}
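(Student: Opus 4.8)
The plan is to prove the two implications separately, leaning on the two principal results already established in this section. The forward direction is essentially immediate: assuming $S(G)$ is almost unperforated and recalling that $G$ is second countable, Lemma~\ref{lem:a-unp-implies-dyn-comparison} yields that $G$ satisfies stable dynamical comparison. Since dynamical comparison is exactly the case $m=1$ of stable dynamical comparison (the groupoid $G^1 = G\times\mathcal R_1$ being canonically identified with $G$), we conclude that $G$ has dynamical comparison.

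For the converse, I would first invoke Proposition~\ref{Prop:purely infinite order units}, which under the standing hypothesis that every measure in $UM(G)$ is trivial tells us that dynamical comparison is equivalent to every element of $S(G)$ being properly infinite. Thus it suffices to show that a conical preordered monoid in which every element $x$ satisfies $2x\le x$ is automatically almost unperforated; this is an elementary semigroup-theoretic fact, independent of the groupoid structure.

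To carry out this last step, I would first observe that proper infiniteness propagates to arbitrary multiples: if $2y\le y$, then an easy induction gives $ny\le y$ for every $n\ge 1$, since $(n+1)y = ny + y \le y + y = 2y \le y$. Now suppose $x,y\in S(G)$ and $n\in\N$ satisfy $(n+1)x\le ny$. Combining the trivial inequality $x\le (n+1)x$ coming from the algebraic preorder, the hypothesis $(n+1)x\le ny$, and the inequality $ny\le y$ just established, we obtain the chain $x\le (n+1)x\le ny\le y$, whence $x\le y$. This is precisely almost unperforation.

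I do not expect a genuine obstacle here, as the dynamical substance has already been absorbed into Proposition~\ref{Prop:purely infinite order units}, and what remains is the purely order-theoretic observation that universal proper infiniteness trivializes perforation. The only point requiring minor care is the bookkeeping for small $n$: the induction $ny\le y$ requires $n\ge 1$, while the degenerate case $n=0$ forces $x\le 0$, hence $x=0$ by conicality of $S(G)$, so that $x\le y$ holds trivially. As a byproduct, combining this equivalence with Lemma~\ref{lem:a-unp-implies-dyn-comparison} shows that dynamical comparison and stable dynamical comparison coincide in this setting, which gives the promised affirmative answer to Question~\ref{Question} in the absence of non-trivial invariant measures.
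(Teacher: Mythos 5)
Your proposal is correct and takes essentially the same route as the paper's proof: the forward direction via Lemma~\ref{lem:a-unp-implies-dyn-comparison} (with dynamical comparison being the $m=1$ case of stable dynamical comparison), and the converse by invoking Proposition~\ref{Prop:purely infinite order units} to obtain proper infiniteness and then concluding with the chain $[f]\le (n+1)[f]\le n[g]\le [g]$. The only difference is cosmetic: you make explicit the induction showing $n[g]\le[g]$ and the degenerate case $n=0$, both of which the paper leaves implicit.
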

\begin{proof} 
	One implication follows from  Lemma~\ref{lem:a-unp-implies-dyn-comparison}. Conversely, we assume that $G$ has dynamical comparison.
	Suppose we are given $[f],[g]\in S(G)$, such that $(n+1)[f]\leq n[g]$. It follows from Proposition \ref{Prop:purely infinite order units} that $[g]$ is properly infinite in $S(G)$.
	We conclude that
	$[f]\leq (n+1)[f]\leq n[g]\leq [g]$, as desired.
\end{proof}

\section{Almost Finite groupoids}\label{sec:AF}
In this section we study the type semigroups associated with almost finite groupoids.
Our main results reveal that almost finiteness is not strong enough of a condition to prove almost unperforation of the type semigroup in the non-minimal setting.
The reason for this lies in a different behaviour of the permanence properties of these two notions: almost unperforation passes to order ideals, while almost finiteness does not pass to restrictions of $G$ to arbitrary open invariant subspaces of $G^{(0)}$.
Prompted by this, we will show that a strong version of almost finiteness, which basically asks for every such restriction to be almost finite, indeed provides us with an almost unperforated type semigroup.

We use this characterization of almost unperforation to clarify the relationship between the type semigroup and the positive cone of the homology group $H_0(G)$.

\subsection{Definition and Properties}
We begin by recalling the definition of almost finiteness and proving some immediate consequences. 
\begin{defi}{\cite[Definition~6.2]{MR2876963}}\label{Def:AlmostFinite} Let $G$ be an ample groupoid with compact unit space.
\begin{enumerate}
    \item We say that $K\subseteq G$ is an \textit{elementary} subgroupoid if it is a compact open principal subgroupoid of $G$ such that $K^{(0)}=G^{(0)}$.
    \item Given a compact subset $C\subseteq G$ and $\varepsilon>0$, a compact subgroupoid $K\subseteq G$ with $K^{(0)}=G^{(0)}$ is called $(C,\varepsilon)$\textit{-invariant}, if for all $x\in G^{(0)}$ we have
	$$\frac{\abs{CKx\setminus Kx}}{\abs{Kx}}<\varepsilon.$$	
	\item We say that $G$ is \textit{almost finite} if for every compact set $C\subseteq G$ and every $\varepsilon>0$ there exists a $(C,\varepsilon)$-invariant elementary subgroupoid $K\subseteq G$.
\end{enumerate}
\end{defi}
\textbf{From now on, whenever we say that a groupoid $G$ is almost finite, we also assume that $G$ is ample and has a compact unit space.}
\begin{defi}\cite[Definition 3.2]{Suzuki}\label{def:castle}
	Let $K$ be a compact groupoid. A \textit{clopen castle} for $K$ is a partition $$K^{(0)}=\bigsqcup\limits_{i=1}^n \bigsqcup\limits_{j=1}^{N_i} F_j^{(i)}$$ into non-empty clopen subsets such that the following conditions hold:
	\begin{enumerate}
		\item For each $1\leq i\leq n$ and $1\leq j,k\leq N_i$ there exists a unique compact open bisection $V_{j,k}^{(i)}$ of $K$ such that $s(V_{j,k}^{(i)})=F_k^{(i)}$ and $r(V_{j,k}^{(i)})=F_j^{(i)}$.
		\item $$K=\bigsqcup\limits_{i=1}^n \bigsqcup\limits_{1\leq j,k\leq N_i} V_{j,k}^{(i)}.$$
	\end{enumerate}
	The pair $\left(\{F_j^{(i)}\mid 1\leq j\leq N_i\},\lbrace V_{j,k}^{(i)}\mid 1\leq j,k\leq N_i\rbrace\right)$ is called the $i$-th tower of the castle and the sets $F_j^{(i)}$ are called the levels of the $i$-th tower.
\end{defi}
\begin{rem}
	Note that the uniqueness of the bisections in $(2)$ above has some important consequences: If $\theta_{j,k}^{(i)}:F_k^{(i)}\rightarrow F_j^{(i)}$ denotes the partial homeomorphism corresponding to the bisection $V_{j,k}^{(i)}$, i.e. $\theta_{j,k}^{(i)}=r\circ (s_{\mid V_{j,k}^{(i)}})^{-1}$, then we have $(\theta_{j,k}^{(i)})^{-1}=\theta_{k,j}^{(i)}$,  $\theta_{j,k}^{(i)}\circ \theta_{k,l}^{(i)}=\theta_{j,l}^{(i)}$, and $\theta_{j,j}^{(i)}=id_{F_j^{(i)}}$.
\end{rem}

Recall that, as mentioned in \cite{Suzuki}, since compact ample principal groupoids always admit a clopen castle, Definition \ref{Def:AlmostFinite} is equivalent to the definition of almost finiteness given in \cite[Definition 3.6]{Suzuki}. We point 
out that due to this fact we will be using both equivalent notions of almost finiteness throughout the paper.

The following small lemma shows how to refine a castle as in Definition \ref{def:castle} and will be used frequently throughout the rest of this article:
\begin{lem}\label{Lemma:RefiningDecompositions}
	Let $K$ be a compact groupoid admitting a clopen castle. Given finitely many clopen subsets $A_1,\ldots, A_r\subseteq K^{(0)}$ there exists a clopen castle for $K$ such that every level of every tower of the castle is either contained in or disjoint from $A_l$ for every $1\leq l\leq r$.
\end{lem}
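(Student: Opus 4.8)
The plan is to start from any given clopen castle for $K$ and refine its levels by intersecting with the Boolean combinations of the sets $A_1, \ldots, A_r$, while being careful to maintain the rigid structure (conditions (1) and (2)) required of a castle. Concretely, let $\left(\{F_j^{(i)}\},\{V_{j,k}^{(i)}\}\right)$ be a clopen castle for $K$ with towers indexed by $1 \leq i \leq n$. For each tower $i$, I would consider the \emph{first} level $F_1^{(i)}$ and partition it according to how it meets the atoms of the finite Boolean algebra generated by $A_1, \ldots, A_r$. That is, for each function $\sigma : \{1, \ldots, r\} \to \{0,1\}$ I set
$$F_1^{(i),\sigma} = F_1^{(i)} \cap \bigcap_{l : \sigma(l)=1} A_l \cap \bigcap_{l : \sigma(l)=0} (K^{(0)} \setminus A_l),$$
and discard those $\sigma$ for which this set is empty. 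These non-empty pieces partition $F_1^{(i)}$ into clopen subsets, each of which is either contained in or disjoint from each $A_l$.

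The key idea is to \emph{transport} this partition of the base level $F_1^{(i)}$ to all other levels of the same tower using the bisections $V_{j,1}^{(i)}$, thereby splitting the tower $i$ into several new towers indexed by the pairs $(i, \sigma)$. Explicitly, for each surviving $\sigma$ and each $1 \leq j \leq N_i$ I define the new level
$$F_j^{(i),\sigma} := \theta_{j,1}^{(i)}\big(F_1^{(i),\sigma}\big) = r\big((s_{\mid V_{j,1}^{(i)}})^{-1}(F_1^{(i),\sigma})\big),$$
which is clopen since $\theta_{j,1}^{(i)}$ is a homeomorphism. The new bisections of the refined tower $(i,\sigma)$ are obtained by restricting the old ones, namely $V_{j,k}^{(i),\sigma} := V_{j,k}^{(i)} \cap (G_{F_k^{(i),\sigma}})$, equivalently the sub-bisection of $V_{j,k}^{(i)}$ with source $F_k^{(i),\sigma}$ and range $F_j^{(i),\sigma}$. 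Using the cocycle identities $\theta_{j,k}^{(i)} \circ \theta_{k,l}^{(i)} = \theta_{j,l}^{(i)}$ and $\theta_{j,j}^{(i)} = \mathrm{id}$ recorded in the Remark after Definition~\ref{def:castle}, one checks that these restricted bisections have the required source and range and that they are the unique bisections of $K$ with those source/range among the new levels; moreover $K$ decomposes as the disjoint union of all the $V_{j,k}^{(i),\sigma}$, so conditions (1) and (2) of Definition~\ref{def:castle} hold for the refined family.

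It remains to verify that every new level is either contained in or disjoint from each $A_l$. For the base levels $F_1^{(i),\sigma}$ this holds by construction. For a general level $F_j^{(i),\sigma}$ with $j > 1$, the point is that all levels of a \emph{single} refined tower are translates of the base piece $F_1^{(i),\sigma}$ under the tower's internal homeomorphisms; since a homeomorphism of the form $\theta_{j,1}^{(i)}$ need not respect the sets $A_l$, I must argue that the piece $F_1^{(i),\sigma}$ being a single atom of the Boolean algebra is enough to guarantee its images still refine the $A_l$. This is in fact the main subtlety: the homeomorphisms do not preserve the $A_l$, so the naive hope that ``refining the base level refines all levels'' requires a second round of refinement. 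The cleanest fix is to refine \emph{every} level simultaneously — form for each level $F_j^{(i)}$ its partition into atoms of the Boolean algebra, and then take the \emph{common refinement across the whole tower} by pulling all level-partitions back to the base via the $\theta$'s and intersecting. That is, replace $F_1^{(i),\sigma}$ above by the atoms of the finite partition of $F_1^{(i)}$ generated by the sets $\bigcap_j \theta_{1,j}^{(i)}(F_j^{(i)} \cap A_l)$; transporting \emph{these} finer base pieces upward then guarantees that every resulting level sits inside a single atom of the Boolean algebra on its own floor, hence is contained in or disjoint from each $A_l$. With this adjustment the verification of (1) and (2) goes through verbatim by restriction as above, completing the proof.
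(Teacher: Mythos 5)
Your corrected construction is essentially the paper's proof: the paper likewise pulls each set $A\cap F_k^{(i)}$ back to the base level via $\theta_{1,k}^{(i)}$, refines $F_1^{(i)}$ into clopen pieces each contained in or disjoint from \emph{every} such pullback, transports the pieces up the tower by $\theta_{j,1}^{(i)}$, and restricts the bisections accordingly (the paper treats one set $A$ at a time and iterates over $l$, while you handle all $A_l$ simultaneously via Boolean atoms — an immaterial difference), and your identification of why the naive base-only refinement fails is exactly the point. One caution: your displayed formula contradicts your own (correct) prose — the base partition must be generated by the individual sets $\theta_{1,j}^{(i)}(F_j^{(i)}\cap A_l)$ ranging over all $j$ and $l$, not by the sets $\bigcap_j \theta_{1,j}^{(i)}(F_j^{(i)}\cap A_l)$, since a piece disjoint from the intersection over $j$ can still partially meet an individual pullback, which would break the transport step.
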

\begin{proof}
	Let us consider the case that we only have one clopen subset $A\subseteq K^{(0)}$. We will replace every tower of the castle by finitely many thinner towers, such that each level of the new towers is either contained in or disjoint from $A$. 
	Let $\theta_{j,k}^{(i)}:F_k^{(i)}\rightarrow F_j^{(i)}$ be the partial homeomorphism associated to the compact open bisection $V_{j,k}^{(i)}$. Consider the compact open subsets $\theta_{1,k}^{(i)}(A\cap F_k^{(i)})\subseteq F_1^{(i)}$ of the base of the $i$-th tower. Taking a clopen refinement we can find a decomposition $F_1^{(i)}=\bigsqcup\limits_{t=1}^{L_i} X^{(i)}_{t,1}$ such that each $X_{t,1}^{(i)}$ is either contained in or disjoint from every $\theta_{1,k}^{(i)}(A\cap F_k^{(i)})$.
	Let $X_{t,j}^{(i)}:=\theta_{j,1}^{(i)}(X_{t,1}^{(i)})\subseteq F_j^{(i)}$. Then we clearly have $K^{(0)}=\bigsqcup_{i=1}^{n}\bigsqcup_{t=1}^{L_i}\bigsqcup_{1\leq j\leq N_i} X_{t,j}^{(i)}$. Moreover, the sets $V_{j,k,t}^{(i)}:=V_{j,k}^{(i)}\cap s^{-1}(X_{t,k}^{(i)})$ are compact open bisections such that $s(V_{j,k,t}^{(i)})=X_{t,k}^{(i)}$ and $r(V_{j,k,t}^{(i)})=X_{t,j}^{(i)}$ and one easily checks, that
	$K=\bigsqcup_{i=1}^n \bigsqcup_{t=1}^{L_i} \bigsqcup_{1\leq j,k\leq N_i} V_{j,k,t}^{(i)}$. Hence we have constructed a finer clopen castle. In this new castle, for every $k$ we have $\theta_{1,k}^{(i)}(X_{t,k}^{(i)}\cap A)=X_{t,1}^{(i)}\cap \theta_{1,k}^{(i)}(A\cap F_k^{(i)})$. By construction, the latter set is either empty or all of $X_{t,1}^{(i)}$. Hence by applying $\theta_{k,1}^{(i)}$ we obtain that $X_{t,k}^{(i)}\cap A$ is either empty or all of $X_{t,k}^{(i)}$, as desired.
	Applying the above process successively to finitely many sets $A_1,\ldots, A_r$ yields the desired result.
\end{proof}

We continue this first part of the section showing important features and permanence properties of almost finiteness. To state them we need to recall some terminology, and well-known facts about almost finite groupoids:
\begin{enumerate}
	\item If $G$ is almost finite, then  $M(G)\neq \emptyset$ \cite[Lemma~3.9]{Suzuki}.
	\item If $G$ is almost finite and minimal, then $G$ is topologically principal \cite[Remark~3.10]{Suzuki}.
	\item If $G$ is an almost finite groupoid and $D\subseteq G^{(0)}$ is a closed $G$-invariant subset, then the restriction $G_{D}$ is almost finite \cite[Lemma 3.13]{Suzuki}.
	
	\item If $G$ admits a proper surjective groupoid homomorphism $\pi:G\rightarrow H$ onto an almost finite groupoid $H$, such that the restriction to every source fibre $Gx\rightarrow H\pi(x)$ is bijective, then $G$ is almost finite \cite[Lemma~5.1]{Suzuki}.
\end{enumerate}

We would like to add another crucial and natural permanence property of almost finiteness to the above list: invariance under stable isomorphism. Recall that two \'etale groupoids $G$ and $G'$ are \textit{stably isomorphic} if $G\times \mathcal R\cong G'\times \mathcal R$, where $\mathcal{R}=\N^2$ is the (discrete) full equivalence relation on $\N$. It is well-known that stable isomorphism agrees with Morita equivalence for ample (Hausdorff) groupoids with $\sigma$-compact unit spaces (see \cite[Theorem~2.19]{MR3601549}). In fact, there are a number of notions of equivalence for groupoids, and they all coincide for ample (Hausdorff) groupoids with $\sigma$-compact unit spaces (see \cite[Theorem~3.12]{FKPS18}).
\begin{lem}\label{Lemma:AF and Products}
	Let $G$ be an almost finite groupoid and $K$ be an elementary groupoid.
	Then $G\times K$ is almost finite.
\end{lem}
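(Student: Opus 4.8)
The plan is to produce, for each compact $C\subseteq G\times K$ and each $\varepsilon>0$, an explicit $(C,\varepsilon)$-invariant elementary subgroupoid of $G\times K$ of the product form $H\times K$, where $H\subseteq G$ is supplied by almost finiteness of $G$. First I would record the structural facts. Since $G$ and $K$ are ample, so is $G\times K$, and $(G\times K)^{(0)}=G^{(0)}\times K^{(0)}$ is compact, so the definition of almost finiteness applies. For any elementary subgroupoid $H\subseteq G$, the product $H\times K$ is compact and open, satisfies $(H\times K)^{(0)}=G^{(0)}\times K^{(0)}=(G\times K)^{(0)}$, and is principal because $\mathrm{Iso}(H\times K)=\mathrm{Iso}(H)\times\mathrm{Iso}(K)=H^{(0)}\times K^{(0)}$. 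Hence $H\times K$ is automatically elementary, and the entire content of the lemma is the invariance estimate.

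The key reduction exploits compactness of $K$. Given a compact $C\subseteq G\times K$, its image $C_1$ under the continuous projection $G\times K\to G$ is compact, and $C\subseteq C_1\times K$; consequently $C(H\times K)(x,y)\subseteq (C_1\times K)(H\times K)(x,y)$ for every unit $(x,y)$, so it suffices to bound the latter. Using almost finiteness of $G$, I would choose a $(C_1,\varepsilon)$-invariant elementary subgroupoid $H\subseteq G$ and claim that $H\times K$ is $(C,\varepsilon)$-invariant.

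The remaining step is the bookkeeping that everything factors across the product. For $(x,y)\in G^{(0)}\times K^{(0)}$ one has $(H\times K)(x,y)=Hx\times Ky$, and a short computation — using that $K$ is a groupoid, so that $K(Ky)=Ky$ (the unit $r(k)\in K$ recovers each $k\in Ky$) — gives the coordinatewise factorization $(C_1\times K)(H\times K)(x,y)=(C_1Hx)\times Ky$. Since the second factor is the identical set $Ky$ on both sides, the relevant set difference splits as $(C_1Hx\setminus Hx)\times Ky$, and because $H$ and $K$ are compact étale all fibres are finite, whence
$$\frac{|C(H\times K)(x,y)\setminus (H\times K)(x,y)|}{|(H\times K)(x,y)|}\le \frac{|C_1Hx\setminus Hx|\cdot|Ky|}{|Hx|\cdot|Ky|}=\frac{|C_1Hx\setminus Hx|}{|Hx|}<\varepsilon,$$
the last inequality being the $(C_1,\varepsilon)$-invariance of $H$. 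As this holds for all $(x,y)$, the subgroupoid $H\times K$ is the desired $(C,\varepsilon)$-invariant elementary subgroupoid.

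The only thing requiring care is the factorization identity $(C_1\times K)(H\times K)(x,y)=(C_1Hx)\times Ky$ together with the observation that the $K$-coordinate contributes the same factor $|Ky|$ to numerator and denominator and therefore cancels; this cancellation is precisely what transfers the invariance of $H$ in $G$ verbatim to $H\times K$ in $G\times K$. There is no genuine analytic difficulty here — one must simply verify that products of subsets of a product groupoid decompose coordinatewise and that the units of $K$ yield $K(Ky)=Ky$ — so I expect the verification of this identity to be the main (and only) point needing attention.
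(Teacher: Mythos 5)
Your proof is correct and takes essentially the same route as the paper's: enlarge $C$ to a product $C_1\times K$ with $C_1\subseteq G$ compact, apply almost finiteness of $G$ to obtain a $(C_1,\varepsilon)$-invariant elementary subgroupoid $H$, and check that $H\times K$ is $(C,\varepsilon)$-invariant via the coordinatewise factorization in which the common factor $|Ky|$ cancels. The only difference is cosmetic: you spell out the identity $(C_1\times K)(H\times K)(x,y)=(C_1Hx)\times Ky$ and the principality of $H\times K$, which the paper treats as clear.
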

\begin{proof}
	Let $C\subseteq G\times K$ be a compact subset and $\varepsilon>0$. Then $C$ is contained in $\tilde{C}\times K$ for a compact subset $\tilde{C}\subseteq G$. 
	By almost finiteness of $G$, there exists a $(\tilde{C},\varepsilon)$-invariant elementary subgroupoid $\tilde{K}$ of $G$. Then $L:=\tilde{K}\times K$ is clearly an elementary subgroupoid of $G\times K$ and for every $(x,y)\in G^{(0)}\times K^{(0)}$ we have
	\begin{align*}
	\abs{CL(x,y)\setminus L(x,y)}& \leq \abs{(\tilde{C}\times K)(\tilde{K}\times K)(x,y)\setminus (\tilde{K}\times K)(x,y)}\\
	& \leq \abs{\tilde{C}\tilde{K}x\setminus \tilde{K}x}\abs{Ky}\\
	& <\varepsilon \abs{\tilde{K}x}\abs{Ky}=\varepsilon\abs{L(x,y)}.
	\end{align*}
\end{proof}

\begin{prop}\label{Prop:MoritaInvariance}
	Let $G$ and $G'$ be ample groupoids with compact unit spaces. Suppose that $G$ and $G'$ are stably isomorphic. Then $G$ is almost finite if and only if $G'$ is almost finite.
\end{prop}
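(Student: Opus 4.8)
The plan is to prove the statement by reducing the general stable isomorphism to the concrete situation handled in Lemma~\ref{Lemma:AF and Products}. Since the relation ``$G$ is almost finite'' is symmetric in $G$ and $G'$ once we establish one implication, it suffices to show that if $G$ is almost finite and $G'$ is stably isomorphic to $G$, then $G'$ is almost finite. By definition, stable isomorphism means $G\times\mathcal R\cong G'\times\mathcal R$, where $\mathcal R=\N^2$. The first issue to address is that $\mathcal R$ has a \emph{non-compact} unit space, so $G\times\mathcal R$ is not almost finite in the sense of Definition~\ref{Def:AlmostFinite} (which requires a compact unit space), and we cannot apply Lemma~\ref{Lemma:AF and Products} directly to the infinite equivalence relation.

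The key step is therefore to pass to finite truncations. First I would observe that because $G'^{(0)}$ is compact, the isomorphism $\Phi\colon G\times\mathcal R\to G'\times\mathcal R$ must carry the compact set $G'^{(0)}\times\{(1,1)\}$ into a compact, hence ``finite-width'', piece of $G\times\mathcal R$; concretely, the image of $\Phi^{-1}(G'^{(0)}\times\{(1,1)\})$ meets only finitely many of the fibres $G^{(0)}\times\{k\}$. This lets me find an $m$ so that $G'$ is isomorphic to a restriction of $G\times\mathcal R_m$ to a compact open full subset of its unit space, where $\mathcal R_m=\{1,\dots,m\}^2$ is the \emph{finite} full equivalence relation, which \emph{is} elementary. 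By Lemma~\ref{Lemma:AF and Products}, $G\times\mathcal R_m$ is almost finite. Finally, almost finiteness is inherited by restrictions to compact open full subsets of the unit space: given a $(C,\varepsilon)$-invariant elementary subgroupoid $K$ of $G\times\mathcal R_m$, one intersects $K$ with the restricted groupoid and uses a castle-refinement argument (Lemma~\ref{Lemma:RefiningDecompositions}) to produce an elementary subgroupoid of the restriction with the required invariance.

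I expect the main obstacle to be the bookkeeping around making the restriction precise and checking that almost finiteness descends to a compact open full restriction. The subtlety is that restricting by a compact open subset $Y\subseteq(G\times\mathcal R_m)^{(0)}$ changes the source fibres: a fibre $(G\times\mathcal R_m)x$ must be replaced by $(G\times\mathcal R_m)|_Y\, x$, and one must control the ratio $|C'K'x\setminus K'x|/|K'x|$ for the restricted data $C',K'$ in terms of the ambient ratio. The cleanest route is to first establish, as an auxiliary lemma, that if $H$ is almost finite and $Y\subseteq H^{(0)}$ is a compact open full subset, then $H|_Y$ is almost finite; this uses fullness to guarantee that every fibre of the restriction is nonempty and comparable in size to an ambient fibre, together with the castle-refinement lemma to adapt an elementary subgroupoid of $H$ to one of $H|_Y$ whose levels respect $Y$. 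With this auxiliary result in hand, the proposition follows by combining the embedding of $G'$ as such a restriction of $G\times\mathcal R_m$ with Lemma~\ref{Lemma:AF and Products}, and then invoking the symmetry of the hypothesis to obtain the converse implication.
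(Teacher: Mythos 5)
Your proposal is correct in substance and rests on the same two ingredients as the paper's proof --- Lemma~\ref{Lemma:AF and Products} applied to a finite truncation $G\times\mathcal{R}_m$, and a Suzuki-type restriction result for full compact open subsets of the unit space --- but it organizes the reduction in the opposite direction, and the difference is worth recording. The paper pushes the truncations \emph{forward}: it sets $G_n=G\times\{1,\dots,n\}^2$, transports them to $H_n=\Phi(G_n)$ so that $G'\times\mathcal{R}=\bigcup_n H_n$, identifies $G'$ with the restriction of $G'\times\mathcal{R}$ to the full compact open set $W=G'^{(0)}\times\{1\}$, and then must adapt \cite[Lemma~3.12]{Suzuki} by hand, because the ambient groupoid $G'\times\mathcal{R}$ has non-compact unit space: given $(C,\varepsilon)$ it locates an $n$ with $C\cup W\subseteq H_n$, verifies that the bisections witnessing fullness of $W$ actually lie in $H_n$, and only then runs the restriction argument inside $H_n$. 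You pull the compact window \emph{back} instead: $W'=\Phi^{-1}(G'^{(0)}\times\{1\})$ is compact open, hence contained in $G^{(0)}\times\{1,\dots,m\}$ for some $m$, and then $G'\cong(G\times\mathcal{R})|_{W'}=(G\times\mathcal{R}_m)|_{W'}$, with $W'$ full in $(G\times\mathcal{R}_m)^{(0)}$ (any arrow of $G\times\mathcal{R}$ whose source and range lie in the first $m$ fibres already lies in $G\times\mathcal{R}_m$). Since $G\times\mathcal{R}_m$ has \emph{compact} unit space and is almost finite by Lemma~\ref{Lemma:AF and Products}, the restriction result applies verbatim, so your route avoids the exhaustion and the adaptation entirely; this is a genuine and slightly cleaner reorganization of the same argument.

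One caution concerning your auxiliary lemma, if you intend to prove it rather than cite \cite[Lemma~3.12]{Suzuki} (which is legitimate here, precisely because your ambient groupoid has compact unit space). The phrase ``given a $(C,\varepsilon)$-invariant elementary subgroupoid $K$, one intersects $K$ with the restricted groupoid'' is not enough as stated: for an arbitrary such $K$ and $x\in Y$, the fibre $(K|_Y)x$ can be as small as the single unit $\{x\}$ even when $Kx$ is large, so the invariance ratio of the restriction is not controlled by that of $K$, and fullness by itself does not repair this. The mechanism that makes the fibres of $K|_Y$ comparable to those of $K$ --- and it is the heart both of Suzuki's proof and of the paper's adaptation --- is to choose the elementary subgroupoid \emph{after} enlarging the compact set: pick compact open bisections $V_1,\dots,V_l$ with $\bigcup_i s(V_i)=H^{(0)}$ and $r(V_i)\subseteq Y$, set $\tilde{C}=C\cup V_1\cup\dots\cup V_l$, and take $K$ to be $(\tilde{C},\varepsilon/2l)$-invariant; invariance under the $V_i$ is what forces $\lvert (K|_Y)x\rvert$ to be a definite fraction of $\lvert Kx\rvert$. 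Castle refinement via Lemma~\ref{Lemma:RefiningDecompositions} alone does not supply this comparability. With that correction, or by invoking Suzuki's lemma directly, your proof is complete.
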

\begin{proof}
	Let us fix an isomorphism $\Phi:G\times \mathcal{R}\rightarrow G'\times \mathcal{R}$. For each $n\in\N$ consider the clopen subgroupoid $G_n:=G\times \lbrace 1,\ldots, n\rbrace ^2$ of $G\times\mathcal{R}$.
	If $G$ is almost finite, then so is each $G_n$ by Lemma \ref{Lemma:AF and Products}.
	Let $H_n:=\Phi(G_n)\subseteq G'\times \mathcal{R}$. By definition, each $H_n$ is almost finite and $G'\times \mathcal{R}=\bigcup_n H_n$. Consider the compact open subset $W:=G'^{(0)}\times \lbrace 1\rbrace\subseteq G'\times \mathcal{R}$. Then $W$ is clearly a $G'\times\mathcal{R}$-full subset of $(G'\times\mathcal{R})^{(0)}=G'^{(0)}\times \N$ such that $(G'\times \mathcal{R})|_{W}\cong G'$. Hence it is enough to show that the restriction groupoid $(G'\times \mathcal{R})|_{W}$ is almost finite. But this follows from a slight adaptation of \cite[Lemma~3.12]{Suzuki}: If $C\subseteq (G'\times \mathcal{R})|_{W}$ is a compact subset and $\varepsilon>0$, then there exists an $n\in\N$ such that $C\cup W\subseteq H_n$. Using the compactness of $H_n^{(0)}$ and the fact that $W$ is $G'\times\mathcal{R}$-full, there exist finitely many compact open bisections $V_1,\ldots,V_l\subseteq G'\times \mathcal{R}$ such that $\bigcup_{i=1}^l s(V_i)=H_n^{(0)}$ and $r(V_i)\subseteq W$ for each $i$. For each $1\leq i\leq l$, we have $V_i\subseteq H_n$. Indeed, since
	$s(V_i),r(V_i)\subseteq H_n^{(0)}$ we have $s(\Phi^{-1}(V_i)),r(\Phi^{-1}(V_i))\subseteq G_n^{(0)}$. But then we must have $\Phi^{-1}(V_i)\subseteq G_n$, which implies our claim.
	
	Now let $\tilde{C}:=C\cup V_1\cup\ldots\cup V_l\subseteq H_n$ and use almost finiteness of $H_n$ to find a $(\tilde{C},\frac{\varepsilon}{2l})$-invariant elementary subgroupoid $K$ of $H_n$. Then we can literally copy the argument from \cite[Lemma~3.12]{Suzuki} to show that $K|_{W}$ is a $(C,\varepsilon)$-invariant elementary subgroupoid of $(G'\times \mathcal{R})|_{W}$. This completes the proof.
\end{proof}

\subsection{Almost finiteness and dynamical comparison} In this subsection we will study the implications of almost finiteness for the type semigroup of not necessarily minimal ample groupoids.
 The main observation is contained in the following Lemma, which says that the algebraic preorder on $S(G)$ is witnessed by the $G$-invariant measures on the unit space $G^{(0)}$.
The lemma is essentially a version of \cite{1710.00393} for ample groupoids obtained by combining techniques from \cite{MR2876963} and \cite{Suzuki}.
 
\begin{lem}
  \label{Lem:Comparison}
  Let $G$ be an almost finite groupoid and let $f,g\in C(G^{(0)},\Z)^+$. If $\mu (f) < \mu (g)$ for all $\mu\in M(G)$, then $[f] \le [g]$ in $S(G)$.  
   \end{lem}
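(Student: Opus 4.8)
The plan is to reduce the statement to a combinatorial matching problem inside a sufficiently invariant castle, and to feed in the measure hypothesis through a uniform gap. Since $G$ is almost finite, the set $M(G)$ is nonempty (one of the recalled facts) and, as $G^{(0)}$ is compact, it is weak*-compact and convex; the function $\mu\mapsto \mu(g)-\mu(f)$ is continuous and strictly positive on $M(G)$, so it attains a minimum $\eta>0$. Thus I first record the \emph{uniform} estimate $\mu(f)\le \mu(g)-\eta$ for every $\mu\in M(G)$, together with the finite bounds $\max f,\max g$, which control how much mass a single level can carry.

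Next I would invoke almost finiteness to produce, for a large compact set $C$ and a small $\varepsilon>0$ to be fixed later, a $(C,\varepsilon)$-invariant elementary subgroupoid $K$; being compact, ample and principal, $K$ carries a clopen castle $G^{(0)}=\bigsqcup_i\bigsqcup_j F_j^{(i)}$. Applying the refinement Lemma~\ref{Lemma:RefiningDecompositions} to the finitely many super-level sets $\{f\ge k\}$ and $\{g\ge l\}$, I may assume every level $F_j^{(i)}$ is a set on which both $f$ and $g$ are constant, say $f\equiv a_{i,j}$ and $g\equiv b_{i,j}$. The castle bisections $V_{j,k}^{(i)}$ identify all levels of a fixed tower, so for \emph{every} $G$-invariant $\mu$ one has $\mu(F_j^{(i)})=\mu(F_1^{(i)})$, whence $\mu(f)=\sum_i \mu(F_1^{(i)})\,S_i^f$ and $\mu(g)=\sum_i \mu(F_1^{(i)})\,S_i^g$ with column sums $S_i^f=\sum_j a_{i,j}$ and $S_i^g=\sum_j b_{i,j}$. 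This converts the measure hypothesis into a weighted comparison of the $S_i^f$ and $S_i^g$.

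The heart of the matter is to turn this \emph{averaged} inequality into an actual subequivalence $[f]\le[g]$. Within a single tower this is easy: since the castle bisections identify all its levels, $f$ restricted to tower $i$ is equivalent to $S_i^f$ copies of $1_{F_1^{(i)}}$ and $g$ to $S_i^g$ such copies, so $S_i^f\le S_i^g$ already yields $f|_{\mathrm{tower}\,i}\precsim g|_{\mathrm{tower}\,i}$ by injecting level-copies. The difficulty, which I expect to be the main obstacle, is that the gap $\eta$ controls only a weighted sum of the differences $S_i^g-S_i^f$ and need not make each of them nonnegative. I therefore expect to need a global matching (Hall/transport) argument whose solvability is guaranteed by the uniform gap, while the $(C,\varepsilon)$-invariance of $K$ forces the towers to be long with $C$-boundary of relative size below $\varepsilon$, so that the surplus coming from $\eta$ absorbs the defect incurred at tower boundaries when $f$-mass must be moved between towers. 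Balancing $C$ and $\varepsilon$ against $\eta$ and carrying out the matching so that all of $f$ is covered is the delicate Ornstein--Weiss/Kerr counting step; it is precisely here that the castle technology of \cite{Suzuki} and the invariance estimates of \cite{MR2876963} are combined, exactly as in the version of \cite{1710.00393} that this lemma adapts to ample groupoids.
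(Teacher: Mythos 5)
Your setup (reduction to a refined castle, constancy of $f$ and $g$ on levels, and the observation that within a single tower the castle bisections reduce everything to comparing the column sums $S_i^f$ and $S_i^g$) matches the paper's framework, and you correctly identify the crux: the measure hypothesis only gives a \emph{weighted average} inequality $\sum_i \mu(F_1^{(i)})S_i^f < \sum_i \mu(F_1^{(i)})S_i^g$, which does not force $S_i^f \le S_i^g$ tower by tower. But at exactly this point your proof stops being a proof: you assert that a ``global matching (Hall/transport) argument'' powered by the uniform gap $\eta$ and the $(C,\varepsilon)$-invariance of $K$ will absorb the per-tower defects, and you never construct it. This is not a routine verification you may defer. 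Moving mass from a surplus tower into a deficient tower cannot be done inside $K$ at all (the towers are the $K$-orbits, hence $K$-invariant), so it requires bisections of $G\setminus K$; the $(C,\varepsilon)$-invariance only bounds the relative size of the $C$-boundary of each $K$-orbit and by itself produces no such system of bisections, nor any control of where they land. Making this work is a genuine quasi-tiling/counting argument, and it is precisely the content of the lemma --- so the proposal has a gap at its core.

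The paper avoids the transport problem entirely, and also never needs the gap $\eta$. It proves, by contradiction, that there exists a pair $(C,\varepsilon)$ such that \emph{every} $(C,\varepsilon)$-invariant elementary subgroupoid with a refined castle already satisfies the per-tower inequality $\abs{E_i}\le\abs{F_i}$ (number of levels inside $A$ at most number of levels inside $B$, after reducing to $f=1_A$, $g=1_B$ via $G^m$). If this failed along a directed family of pairs $\lambda=(C,\varepsilon)$, one takes, for each $\lambda$, the uniform averaging measure $\mu_\lambda$ over the $K_\lambda$-orbit of a point in the base of a bad tower; the $(C,\varepsilon)$-invariance shows $\abs{\mu_\lambda(s(U))-\mu_\lambda(r(U))}<2\varepsilon$ for compact open bisections $U\subseteq C$, so any weak-$*$ cluster point $\mu$ is an invariant probability measure, and the bad-tower counting gives $\mu(A)\ge\mu(B)$, contradicting the hypothesis. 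The subequivalence is then assembled tower by tower using only the castle bisections, with no cross-tower transfer. If you want to salvage your direct approach, this contradiction-plus-weak-$*$-limit mechanism is the missing ingredient; as written, your argument leaves the decisive step as a conjecture.
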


 \begin{proof} Passing to $G^m$ for $m$ big enough, we can assume that $f=1_A$ and $g=1_B$ for clopen subsets $A$ and $B$ of $G^{(0)}$, with $\mu (A) <\mu (B)$ for all $\mu\in M(G)$.

Given a pair $(C,\varepsilon)$ we can find a $(C,\varepsilon)$-invariant elementary subgroupoid $K$ by almost finiteness. Using Lemma \ref{Lemma:RefiningDecompositions} we may assume that it admits a 
 castle $((F_j^{(i)})_{1\leq j\leq N_i},(V_{j,k}^{(i)})_{1\leq j,k\leq N_i})_{i=1}^n$, such that every level in every tower is either contained in or disjoint from each of the sets $A,B$.
	For each $1\leq i\leq n$ let $E_{i}=\lbrace k\mid F_k^{(i)}\subseteq A \rbrace$ and $F_{i}=\lbrace j\mid F_j^{(i)}\subseteq B\rbrace$ be the sets counting how many levels of the $i$-th tower 
	are contained in $A$ and $B$ respectively. Note that these sets depend on $(C,\varepsilon)$ (although we do not include this in our notation).
	
	\begin{claim*} There exists $(C,\varepsilon)$ such that for any $(C,\varepsilon)$-invariant elementary subgroupoid $K\subseteq G$  (admitting a castle for $K$ as described above), it follows that
\begin{equation}\label{Equation}
 \abs{E_{i}}\leq \abs{F_{i}}\,\,\,\text{ for each }1\leq i\leq n.
\end{equation}
\end{claim*}
\noindent \begin{claimproof}
	
	Suppose this is not the case. Then we can write $G$ as a directed union of symmetric compact subsets $C=C^{-1}$, and for each $\lambda:=(C,\varepsilon)$ find $(C,\varepsilon)$-invariant compact subgroupoids 
	$K_\lambda\subseteq G$ such that there exists a tower $\mathcal F_\lambda:=(F_{j}^{(i_\lambda)},V_{j,k}^{(i_\lambda)})_{1\leq j,k\leq L_\lambda}$ in the corresponding clopen castle for $K_\lambda$ with the property that
	 $$ \abs{E_{i_\lambda}}>  \abs{F_{i_\lambda}}.$$
	
	For each $\lambda$, let $x_{\lambda}$ be any element in  $F_1^{(i_\lambda)}$ (the basis of $\mathcal F_\lambda$), and define a probability measure $\mu_\lambda$ on $G^{(0)}$  by 
	$$\mu_\lambda(D)=\frac{1}{L_\lambda}\sum\limits_{j=1}^{L_\lambda} \delta_{x_\lambda}(\theta_{1,j}^{(i_\lambda)}(D\cap F_j^{(i_\lambda)})).$$
	Now let $U$ be a compact open bisection such that $U\subseteq C$, and note
        that $r(K_\lambda x_\lambda )=\lbrace \theta^{i_\lambda}_{j,1}(x_\lambda)\mid 1\leq j\leq L_\lambda\rbrace$ and $|K_\lambda x_\lambda |=L_\lambda$. Then, we get that : 
        $$\mu_\lambda(r(U))=\frac{\abs{r(U)\cap r(K_\lambda x_\lambda)}}{\abs{K_\lambda x_\lambda}}=\frac{\abs{U^{-1}K_\lambda x_\lambda}}{\abs{K_\lambda x_\lambda}}.$$
	Similarly, we get $$\mu_\lambda(s(U))=\frac{\abs{s(U)\cap r(K_\lambda x_\lambda)}}{\abs{K_\lambda x_\lambda}}=\frac{\abs{UK_\lambda x_\lambda}}{\abs{K_\lambda x_\lambda}}.$$
	Now $$\abs{UK_\lambda x_\lambda}=\abs{UK_\lambda x_\lambda\cap K_\lambda x_\lambda}+\abs{UK_\lambda x_\lambda\setminus K_\lambda x_\lambda}.$$
	Since $\abs{UK_\lambda x_\lambda\cap K_\lambda x_\lambda}=\abs{U^{-1}K_\lambda x_\lambda\cap K_\lambda x_\lambda}$, we also get $$\abs{U^{-1}K_\lambda x_\lambda}=\abs{UK_\lambda x_\lambda\cap 
	K_\lambda x_\lambda}+\abs{U^{-1}K_\lambda x_\lambda\setminus K_\lambda x_\lambda}.$$
	Putting all of this together we obtain
	\begin{align*}
	\abs{\mu_\lambda(s(U))-\mu_\lambda(r(U))}&=\frac{\abs{\abs{UK_\lambda x_\lambda\setminus K_\lambda x_\lambda}-\abs{U^{-1}K_\lambda x_\lambda\setminus K_\lambda x_\lambda}}}{\abs{K_\lambda x_\lambda}}\\
	&\leq2\frac{\abs{CK_\lambda x_\lambda\setminus K_\lambda x_\lambda}}{\abs{K_\lambda x_\lambda}}<2\varepsilon.
	\end{align*}
	Now let $\mu$ be a weak-$*$ cluster point of this net. Then $\mu\in M(G)$. Indeed, passing to a subnet, we can assume that $\mu = \lim_\lambda \mu_\lambda $. 
	Now if $U$ is any compact open bisection and $\varepsilon >0$ is arbitrary, 
	we can find  $\lambda = (C,\delta)$  such that $U\subseteq C$, $\delta <\frac{\varepsilon}{6}$ and moreover  $\abs{\mu (r(U))-\mu_\lambda (r(U))}<\varepsilon/3$
	and $\abs{\mu (s(U))-\mu_\lambda(s(U))}<\varepsilon /3$. By the above computation, we have $\abs{\mu_\lambda(s(U))-\mu_\lambda(r(U))}<2\delta < \varepsilon/3$. 
	
	Then 
	\begin{align*}
	\abs{\mu(s(U))-\mu(r(U))} & \leq \abs{\mu(s(U))-\mu_\lambda (s(U))}+\abs{\mu_\lambda(s(U))-\mu_\lambda (r(U))}\\+ & \abs{\mu_\lambda(r(U))-\mu(r(U))}
	 <\varepsilon.
	\end{align*}
	 As $\varepsilon>0$ was arbitrary, we conclude that $\mu(s(U))=\mu(r(U))$, and regularity implies that $\mu$ is indeed a $G$-invariant probability measure on $G^{(0)}$.
	
	Now, using the fact that each level $F_j^{({i_\lambda})}$ is either contained in or disjoint from each of the sets $A,B$ (Lemma \ref{Lemma:RefiningDecompositions}) we compute:
	\begin{align*}
	\mu(A)=\lim_\lambda \mu_\lambda(A)&=\lim_\lambda \frac{1}{L_\lambda}\abs{E_{i_\lambda}}\\
	&\geq \lim_\lambda \frac{1}{L_\lambda}\abs{F_{i_\lambda}}\\
	&=\lim_\lambda \mu_\lambda(B)=\mu(B).
	\end{align*}
	So we obtain that $\mu (A) \ge \mu(B)$, contradicting the hypothesis.   
		\end{claimproof}
		
	From the inequality (\ref{Equation}) we obtain injections of sets
	$$ E_{i} \hookrightarrow F_{i}, \qquad (1\le i\le n),$$
	and from this it is straightforward to see that $[1_A]\le [1_B]$ in $S(G)$. This concludes the proof.   
  \end{proof}

 As a first immediate application of this, we obtain an easy way to identify the order units in $S(G)$:
 \begin{cor}
 	Let $G$ be an almost finite groupoid. Then $[f]\in S(G)$ is an order unit if and only if $\mu(f)>0$ for all $\mu\in M(G)$.
 \end{cor}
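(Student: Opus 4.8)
My plan is to derive both implications directly from Lemma~\ref{Lem:Comparison}, using as the only extra ingredient the fact that each $\mu\in M(G)$ induces an order-preserving unnormalized state on $S(G)$. First I would record that, since $\mu$ is $G$-invariant, the assignment $[h]\mapsto\mu(h)$ is a well-defined monoid homomorphism $S(G)\to[0,\infty]$ (it respects the defining relations $[s(V)]=[r(V)]$ by invariance and is additive on disjoint unions), so in particular $[g]\le[h]$ in $S(G)$ forces $\mu(g)\le\mu(h)$. I would also invoke the standing facts that $G$ almost finite makes $G^{(0)}$ compact and $M(G)$ a \emph{non-empty} weak-$*$ compact set, on which each evaluation $\mu\mapsto\mu(h)$, $h\in C(G^{(0)},\Z)$, is continuous.

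For the forward implication, suppose $[f]$ is an order unit. I would apply the defining property to the particular element $[1_{G^{(0)}}]\in S(G)$ to obtain an $n\in\N$ with $[1_{G^{(0)}}]\le n[f]$. Evaluating an arbitrary $\mu\in M(G)$ on this inequality then gives $1=\mu(G^{(0)})\le n\mu(f)$, so that $\mu(f)\ge 1/n>0$, as required.

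For the reverse implication, assume $\mu(f)>0$ for all $\mu\in M(G)$. Here I would first pass from this pointwise positivity to a \emph{uniform} lower bound: by continuity of $\mu\mapsto\mu(f)$ and compactness of $M(G)$, the value $c:=\min_{\mu\in M(G)}\mu(f)$ is attained and hence strictly positive. Now let $[g]\in S(G)$ be arbitrary; since $G^{(0)}$ is compact I may take a representative $g\in C(G^{(0)},\Z)^+$, and by the same compactness argument $M:=\max_{\mu\in M(G)}\mu(g)<\infty$. Choosing any integer $n>M/c$ yields, for every $\mu\in M(G)$,
\[
\mu(g)\le M< nc\le n\mu(f)=\mu(nf),
\]
so Lemma~\ref{Lem:Comparison} gives $[g]\le n[f]$. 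Since $[g]$ was arbitrary and $[f]\neq 0$ (if $[f]=0$ then $\mu(f)=0$ for the non-empty set $M(G)$, a contradiction), this shows $[f]$ is an order unit.

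The only step genuinely requiring care is the passage from the hypothesis $\mu(f)>0$ to the uniform bound $c>0$; this is exactly where non-emptiness and weak-$*$ compactness of $M(G)$, together with continuity of the evaluation functionals, are used. Once $c$ is in hand, the strict inequality needed to invoke Lemma~\ref{Lem:Comparison} is secured simply by taking $n$ large, so I do not anticipate any further obstacle beyond this bookkeeping.
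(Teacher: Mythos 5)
Your proof is correct and follows essentially the same route as the paper: compactness of $M(G)$ gives a uniform lower bound on $\mu(f)$, Lemma \ref{Lem:Comparison} then yields $[g]\leq n[f]$ for arbitrary $[g]$, and the converse follows by evaluating measures on $[1_{G^{(0)}}]\leq N[f]$. Your additional bookkeeping (the state property of $\mu$ on $S(G)$, non-emptiness of $M(G)$ to rule out $[f]=0$, and the uniform upper bound on $\mu(g)$) only makes explicit what the paper leaves implicit.
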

 \begin{proof}
 	If $\mu(f)>0$ for all $\mu\in M(G)$, then by compactness of $M(G)$ there exists $N>0$ such that $1/N < \mu (f)$ for all $\mu \in M(G)$. Therefore, if $[g]\in S(G)$ is an arbitrary element, then there exists some $n\in \N$ such that $\mu(g)<n\mu(f)=\mu(nf)$ for all $\mu \in M(G)$.
 	By Lemma \ref{Lem:Comparison} we conclude that $[g]\leq [nf]=n[f]$.
 	Conversely, if $[f]\in S(G)$ is an order unit, then $[1_{G^{(0)}}]\leq N[f]$ for some $N\in \N$. Hence $1=\mu(1_{G^{(0)}})\leq N \mu(f)$, which implies our claim.
 \end{proof}
 We can now apply this result to come back to the study of almost unperforation of the type semigroup $S(G)$.
 We will denote by $S(G)^*$ the subsemigroup of $S(G)$ consisting of all the order-units of $S(G)$.
 In the following we will denote the algebraic preorder on $S(G)^*\cup\lbrace 0\rbrace$ by $\leq^*$. 
 We are now ready to prove our first main result in this section:
 
 \begin{thm}\label{Thm:Almost finite implies almost unperf}
 	If $G$ is almost finite, then $S(G)^*\cup\lbrace 0\rbrace$ is almost unperforated. In particular, if $G$ is almost finite and minimal, then $S(G)$ itself is almost unperforated.
 \end{thm}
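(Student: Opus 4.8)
The plan is to show that the preordered monoid $S(G)^*\cup\{0\}$, equipped with the order $\leq^*$, satisfies the almost unperforation property directly by reducing it to the comparison result in Lemma~\ref{Lem:Comparison}. So suppose $x,y\in S(G)^*\cup\{0\}$ and $n\in\N$ satisfy $(n+1)x\leq^* ny$; we must produce $x\leq^* y$. The degenerate case $x=0$ is trivial, so assume $x,y$ are genuine order units. First I would pass to a suitable product groupoid $G^m=G\times\mathcal{R}_m$ in order to realize $x$ and $y$ as classes of indicator functions of clopen sets: since each element of $S(G)$ is represented by some $f\in C(G^{(0)},\Z)^+$ and $S(G^m)=S(G)$, we may write $x=[1_A]$ and $y=[1_B]$ for clopen $A,B\subseteq (G^m)^{(0)}$. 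By Lemma~\ref{Lemma:AF and Products} the groupoid $G^m$ is again almost finite, so Lemma~\ref{Lem:Comparison} and the preceding Corollary remain available at this level, and (up to normalization) $M(G)$ and $M(G^m)$ are identified.

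The key computation is then to translate the algebraic inequality $(n+1)[1_A]\leq^* n[1_B]$ into a strict inequality of measures. Evaluating against any $\mu\in M(G)$, the inequality $(n+1)[1_A]\leq^* n[1_B]$ yields $(n+1)\mu(A)\leq n\mu(B)$, since every $\mu\in M(G)$ is additive and monotone on the type semigroup. Because $y=[1_B]$ is an order unit, the Corollary above gives $\mu(B)>0$ for all $\mu\in M(G)$, and by compactness of $M(G)$ there is a uniform lower bound $\mu(B)\geq 1/N$. From $(n+1)\mu(A)\leq n\mu(B)$ we obtain
\[
\mu(A)\leq \frac{n}{n+1}\mu(B)=\mu(B)-\frac{1}{n+1}\mu(B)\leq \mu(B)-\frac{1}{(n+1)N}<\mu(B)
\]
for every $\mu\in M(G)$. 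Thus $\mu(A)<\mu(B)$ strictly for all invariant probability measures, and Lemma~\ref{Lem:Comparison} applies to give $[1_A]\leq[1_B]$, i.e.\ $x\leq y$. The final point is to check that this inequality holds in the restricted order $\leq^*$ on $S(G)^*\cup\{0\}$, not merely in $S(G)$: since $x,y$ are order units and any witness $z$ with $x+z=y$ lies below the order unit $y$, one verifies that $z$ can be taken in $S(G)^*\cup\{0\}$, so that $x\leq^* y$ as required.

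The main obstacle I anticipate is precisely this last bookkeeping step: the almost unperforation statement is about the \emph{sub}monoid $S(G)^*\cup\{0\}$ with its own induced preorder $\leq^*$, and one must be careful that the hypothesis $(n+1)x\leq^* ny$ (an inequality witnessed inside $S(G)^*\cup\{0\}$) correctly feeds into the measure estimate, and that the conclusion $[1_A]\leq[1_B]$ obtained in the ambient $S(G)$ lifts back to an inequality in the restricted order. The uniform lower bound on $\mu(B)$ coming from compactness of $M(G)$ and the order-unit criterion is what makes the strict inequality robust enough to invoke Lemma~\ref{Lem:Comparison}; without the order-unit hypothesis on $y$ one could only conclude $\mu(A)\leq\frac{n}{n+1}\mu(B)$, which need not be strict when $\mu(B)=0$, and this is exactly why the statement is phrased for $S(G)^*$ rather than all of $S(G)$. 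The ``In particular'' clause is then immediate: when $G$ is minimal, Lemma~\ref{Lemma:Order units} shows $S(G)$ is simple, so $S(G)=S(G)^*\cup\{0\}$ and $\leq^*$ coincides with the ambient order, whence $S(G)$ itself is almost unperforated.
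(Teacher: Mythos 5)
Your overall strategy coincides with the paper's: evaluate against invariant probability measures, use the order-unit hypothesis on $y$ (via the corollary following Lemma~\ref{Lem:Comparison}, which says order units are exactly the classes with $\mu(f)>0$ for all $\mu\in M(G)$) to turn $(n+1)\mu(A)\leq n\mu(B)$ into the strict inequality $\mu(A)<\mu(B)$, invoke Lemma~\ref{Lem:Comparison} to get $x\leq y$ in $S(G)$, and settle the minimal case with Lemma~\ref{Lemma:Order units}. Two of your steps are redundant but harmless: Lemma~\ref{Lem:Comparison} is stated for arbitrary $f,g\in C(G^{(0)},\Z)^+$ and performs the passage to $G^m$ and to indicator functions internally, so you need not do that reduction yourself; and the uniform lower bound $\mu(B)\geq 1/N$ from compactness of $M(G)$ is unnecessary, since the lemma only requires the pointwise strict inequality $\mu(A)<\mu(B)$, which already follows from $\mu(A)\leq\frac{n}{n+1}\mu(B)$ and $\mu(B)>0$.

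However, your final step --- the one you yourself flag as the main obstacle --- contains a genuine gap. To upgrade $x\leq y$ to $x\leq^* y$ you argue that a witness $z$ with $x+z=y$ ``lies below the order unit $y$'' and hence can be taken in $S(G)^*\cup\lbrace 0\rbrace$. That implication is false: being dominated by an order unit does not make an element an order unit, and it fails precisely in the situation where the theorem has content, namely when $G$ is almost finite but not minimal. Indeed, if $D\subseteq G^{(0)}$ is a proper nonempty closed invariant set and $A$ is a nonempty compact open subset of $G^{(0)}\setminus D$, then $[1_A]\leq[1_{G^{(0)}}]$ with $[1_{G^{(0)}}]$ an order unit, yet $[1_A]$ is neither $0$ nor an order unit, since $r(GA)\subseteq G^{(0)}\setminus D$ is not all of $G^{(0)}$ (Lemma~\ref{Lemma:Order units}). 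The correct argument is one line away from what you already established: since $\mu(A)<\mu(B)$ for every $\mu\in M(G)$, any witness $z=[h]$ with $[1_A]+[h]=[1_B]$ satisfies $\mu(h)=\mu(B)-\mu(A)>0$ for every $\mu\in M(G)$, and the order-unit criterion then shows $z\in S(G)^*$, whence $x\leq^* y$. This is exactly how the paper closes the proof.
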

 \begin{proof} 
 	Let $[f],[g]\in S(G)^*\cup\lbrace 0\rbrace$ such that $(n+1)[f]\leq n [g]$. We may assume $[g]\neq 0$ since the result is obvious otherwise. Then, for every $\mu\in M(G)$, we have $\mu(g)>0$ and $(n+1)\mu(f)\leq n\mu(g)$. We conclude that $\mu(f)<\mu(g)$ and hence $[f]\leq [g]$ in $S(G)$ by Lemma \ref{Lem:Comparison}. Thus, there exists some $[h]\in S(G)$ such that $[f]+[h]=[g]$. It follows that $\mu(h)=\mu(g)-\mu(f)>0$ for all $\mu\in M(G)$ and hence $[h]$ is an order unit by the previous Lemma. It follows that in fact we  have $[f]\leq^* [g]$ which completes the proof of the first statement.
 	
 For the second statement, we see that $S(G)$ is simple by Lemma~\ref{Lemma:Order units}. Hence, $S(G)=S(G)^*\cup\lbrace 0\rbrace$ is almost unperforated.
 \end{proof}
 
 This can be used to determine the groupoid homology of $G$ when combined with the following result:

 \begin{lem}\label{Lem:Cancellative}
 	 If $G$ is almost finite and no restriction $G_D$, for a closed invariant set $D\subseteq G^{(0)}$, is isomorphic to $\mathcal{R}_n$ for some $n\in \N$, then $S(G)^*\cup \lbrace 0\rbrace$ is cancellative.
 \end{lem}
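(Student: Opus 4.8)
The plan is to reduce cancellation to a statement about invariant measures and then settle that statement by a castle argument in the spirit of Lemma~\ref{Lem:Comparison}. Suppose $a,b,c\in S(G)^*\cup\{0\}$ satisfy $a+c=b+c$. If $c=0$ there is nothing to prove, so assume $c$ is an order unit. The degenerate cases are easy: if, say, $a=0$, then $c=b+c$, so evaluating at any $\mu\in M(G)$ gives $\mu(b)=0$, forcing $b=0$ since a nonzero $b\in S(G)^*$ has $\mu(b)>0$; so we may assume $a,b,c$ are all order units. Passing to $G^m$ for $m$ large enough (which, by Lemma~\ref{lem:a-unp-implies-dyn-comparison}, leaves $S(G^m)=S(G)$, identifies $M(G^m)$ with $M(G)$, and leaves the hypothesis on restrictions unchanged, since $(G^m)_{D'}\cong\mathcal{R}_n$ would force $G_D\cong\mathcal{R}_{n/m}$) we may represent $a=[1_A]$, $b=[1_B]$, $c=[1_Y]$ for full clopen subsets $A,B,Y$ of the compact unit space. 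As every $\mu\in M(G)$ is then a probability measure we have $\mu(Y)<\infty$, so $\mu(A)+\mu(Y)=\mu(B)+\mu(Y)$ cancels to $\mu(A)=\mu(B)$ for all $\mu\in M(G)$. Everything thus reduces to the core assertion: \emph{if $A,B$ are full clopen sets with $\mu(A)=\mu(B)$ for every $\mu\in M(G)$, then $[1_A]=[1_B]$}. Equivalently, this says that the canonical map $S(G)^*\cup\{0\}\to H_0(G)^+=C(S(G))$ of Proposition~\ref{Prop:Cancellative hull of type semigroup} is injective, whence $S(G)^*\cup\{0\}$ embeds into a cancellative monoid and is itself cancellative.

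To prove the core assertion I would work with castles. Fixing a compact $C$ and $\varepsilon>0$, choose a $(C,\varepsilon)$-invariant elementary subgroupoid $K$ and, using Lemma~\ref{Lemma:RefiningDecompositions}, arrange a clopen castle refining both $A$ and $B$, so that every level of every tower lies inside or is disjoint from each of $A$ and $B$. For the $i$-th tower, of height $L_i$, let $|E_i|$ and $|F_i|$ count the levels contained in $A$ and in $B$ respectively. Since levels of a single tower are mutually equivalent in $K$, we have $[1_A]=\sum_i|E_i|\,[1_{F_1^{(i)}}]$ and $[1_B]=\sum_i|F_i|\,[1_{F_1^{(i)}}]$, so it suffices to transfer the ``excess'' $A$-levels onto the ``excess'' $B$-levels. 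The first step is to control the defects: exactly as in the Claim inside the proof of Lemma~\ref{Lem:Comparison}, from any tower on which $\big||E_i|-|F_i|\big|\ge\delta L_i$ one builds a probability measure $\mu_\lambda$ whose weak-$*$ cluster point $\mu$ lies in $M(G)$ and satisfies $|\mu(A)-\mu(B)|\ge\delta$; since $\mu(A)=\mu(B)$ this is impossible. Hence for every $\delta>0$ there is a choice of $(C,\varepsilon)$ for which every tower of the resulting castle satisfies $\big||E_i|-|F_i|\big|<\delta L_i$.

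This is where the hypothesis enters, and it is worth noting that under almost finiteness the exclusion of restrictions $G_D\cong\mathcal{R}_n$ captures all finite orbits: a finite orbit carrying nontrivial isotropy cannot support an almost finite restriction, so every finite orbit is automatically free, i.e. of the form $\mathcal{R}_n$. Since an atom of an invariant measure is exactly a finite orbit and conversely, the hypothesis is equivalent to the statement that every measure in $M(G)$ is non-atomic. I would first record the consequence that, for any prescribed $m_0$, a sufficiently invariant elementary subgroupoid has all tower heights $L_i\ge m_0$: a point with infinite orbit cannot remain in a short $K$-orbit once $C$ is large and $\varepsilon$ small, and compactness of $G^{(0)}$ makes the bound uniform. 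With long towers, small relative defects, and fullness (hence $G$-connectivity between towers) in hand, the inter-tower boundary, of relative size at most $\varepsilon$ by $(C,\varepsilon)$-invariance, provides the room to match the excess levels of one tower against excess levels of other towers: using the $G$-bisections realising this boundary together with the freedom to slide within towers, one produces compact open bisections $W_1,\dots,W_k$ with $1_A=\sum_j 1_{s(W_j)}$ and $1_B=\sum_j 1_{r(W_j)}$, that is $[1_A]=[1_B]$.

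The main obstacle is precisely this last matching step: promoting the asymptotic equality of the level counts (equal frequencies, defects $o(L_i)$) to an \emph{exact} equidecomposition of $A$ and $B$. This is a F\o lner-type absorption argument in which the non-atomicity furnished by the no-finite-orbit hypothesis is essential, as it both guarantees arbitrarily long towers and rules out the rigid, indivisible periodic mass that would block any rerouting of the excess levels. By contrast, the strict comparison of Lemma~\ref{Lem:Comparison} and the almost unperforation of Theorem~\ref{Thm:Almost finite implies almost unperf} are of no use at the boundary $\mu(A)=\mu(B)$, since they only yield subequivalences from \emph{strict} measure inequalities; this is why a genuinely combinatorial castle argument, rather than a soft monoid-theoretic one, is required here.
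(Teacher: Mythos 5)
There is a genuine gap, and it sits exactly where you located your ``core assertion.'' You reduce cancellativity to the statement that $\mu(A)=\mu(B)$ for all $\mu\in M(G)$ forces $[1_A]=[1_B]$ in $S(G)$, and you declare this equivalent to injectivity of the canonical map $S(G)\to C(S(G))=H_0(G)^+$. These are not equivalent: injectivity into the cancellative hull is precisely cancellativity, whereas your core assertion says that the states coming from invariant measures \emph{separate} the elements of $S(G)$ --- a strictly stronger property, since $H_0(G)^+$ may itself contain distinct elements on which every state agrees (infinitesimals). Moreover, the core assertion is false under the hypotheses of the lemma. Take a Cantor minimal $\Z$-system whose dimension group is $\Z^2$ with positive cone $\{(a,b):a>0\}\cup\{(0,0)\}$ and order unit $(2,0)$; such a system exists by the Herman--Putnam--Skau realization theorem, its transformation groupoid $G$ is almost finite and minimal, and no restriction to a closed invariant set is $\mathcal{R}_n$. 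By Corollary \ref{cancellative} we have $S(G)\cong H_0(G)^+\cong K^0$, the unique invariant measure corresponds to the unique state $(a,b)\mapsto a/2$, and the elements $(1,0)\neq(1,1)$ of $S(G)$ agree under that state; passing to $G^m$ as you yourself do, they are represented by clopen sets $A,B$ with $\mu(A)=\mu(B)$ for all $\mu$ but $[1_A]\neq[1_B]$. So the step you flag as ``the main obstacle'' --- promoting defects of size $o(L_i)$ to an exact equidecomposition --- is not merely technically hard, it is impossible: $[1_A]-[1_B]$ is a nonzero infinitesimal in $H_0(G)$, and since $\sim_G$-equivalence preserves the $H_0$-class exactly, no refinement of castles and no rerouting of bisections can kill it. (Note also that this example satisfies the lemma's conclusion, so it shows cancellativity and state-separation genuinely come apart.)

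The paper's proof runs in the opposite direction to the one you dismiss in your last paragraph: it is precisely a ``soft monoid-theoretic'' argument, and it succeeds because cancellativity, unlike state-separation, does not require killing infinitesimals. Concretely, the hypothesis that no $G_D$ is $\mathcal{R}_n$ enters through Lemma \ref{Lemma:OrderIdeals}: every simple quotient of $S(G)$ is of the form $S(G_D)$ for a closed invariant $D$, and since $G_D$ is almost finite and not elementary, these quotients have no irreducible elements; hence every order unit of $S(G)$ is weakly divisible. Combining weak divisibility with almost unperforation of $S(G)^*\cup\{0\}$ (Theorem \ref{Thm:Almost finite implies almost unperf}) and the refinement-monoid theorems of Ortega--Perera--R{\o}rdam and Ara--Pardo gives cancellativity of $S(G)^*$; the remaining step, that $u+a=u$ forces $a=0$, uses $M(G)\neq\emptyset$ and positivity of measures on order units, exactly as in your handling of the degenerate cases. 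Your reduction of everything to measure data discards the monoid-theoretic information (refinement, weak divisibility) that the hypothesis actually supplies, and that information is what carries the proof.
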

 \begin{proof} We will use some results from \cite{AP96,AGPS,MR2806681}.
 	Recall that an element $x$ of a monoid $M$ is {\it weakly divisible} if it can be written as $x= 2a+3b$ for $a,b\in M$. If all order-units of $M$ are weakly divisible, then $M$ is said to have {\it weak divisibility for order-units} (\cite[Definition 2.2]{MR2806681}). An element $x$ of a conical monoid $M$ is said to be {\it irreducible} if it is nonzero and given any decomposition $x=a+b$ in $M$ we have that either $a$ or $b$ are zero.
 	If follows easily from \cite[Theorem 6.7]{AGPS} that an order-unit $u$ of a conical refinement monoid $M$ is weakly divisible if and only if $\ol{u}$ is not irreducible in any simple quotient $M/I$ of $M$.

 	Now, by Lemma \ref{Lemma:OrderIdeals}, any simple quotient of $S(G)$ is of the form $S(G)/S(G_{G^{(0)}\setminus D})\cong S(G_D)$ for a closed invariant subset $D\subseteq G^{(0)}$. Since $G_D$ is different from $\mathcal R_n$ and almost finite, it follows that $D$ is the Cantor set, implying the lack of irreducible elements in the simple quotients of $S(G)$. 
 	Therefore, all the order-units of $S(G)$ are weakly divisible and thus $S(G)$ has weak divisibility for order-units. 
 	
 	Now, we deduce from \cite[Theorem 3.4]{MR2806681} that $S(G)^*\cup \{0\}$ is a simple refinement monoid, and, by Theorem \ref{Thm:Almost finite implies almost unperf}, that $S(G)^*\cup \{ 0\}$ is almost unperforated. 
 	Hence, it follows 
 	from \cite[Theorem 3.8]{MR2806681} and \cite[Corollary 1.8]{AP96} that $S(G)^*$ is cancellative. To show that $S(G)^*\cup \{0\}$ is cancellative it is thus enough to show that  
 	 for a fixed element $u\in S(G)^*$ and $a\in S(G)^*\cup \{0\}$, the relation $u+a=u$ implies $a=0$. But this is obviously implied by the fact that $M(G)\ne \emptyset$, and the fact that 
 	 $\mu (x) >0$ for any order-unit $x$ in $S(G)$ and any $\mu\in M(G)$. 
 \end{proof}

 \begin{cor}\label{cancellative}
 	Let $G$ be a minimal almost finite groupoid. Then $S(G)$ is a cancellative monoid and $S(G)\cong H_0(G)^+$.
 \end{cor}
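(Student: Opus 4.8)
The plan is to reduce the Corollary to two facts already in hand: Lemma~\ref{Lem:Cancellative}, which produces cancellativity of $S(G)^*\cup\{0\}$ once the exceptional restrictions $\mathcal{R}_n$ are excluded, and Proposition~\ref{Prop:Cancellative hull of type semigroup}, which identifies $H_0(G)^+$ with the universal cancellative quotient $C(S(G))$. The key observation making this work is purely formal: if a conical monoid $S$ is already cancellative, then the defining congruence of $C(S)$ is equality, so the canonical map $S\to C(S)$ is an isomorphism. Hence the second assertion $S(G)\cong H_0(G)^+$ will follow automatically from the first assertion (cancellativity of $S(G)$) together with Proposition~\ref{Prop:Cancellative hull of type semigroup}, since $G^{(0)}$ is compact ($G$ being almost finite).

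So the real task is to show $S(G)$ is cancellative. First I would use minimality: by Lemma~\ref{Lemma:Order units} minimality of $G$ is equivalent to simplicity of $S(G)$, i.e. $S(G)=S(G)^*\cup\{0\}$, so it is enough to prove $S(G)^*\cup\{0\}$ is cancellative. Now since $G$ is minimal the only closed invariant subsets of $G^{(0)}$ are $\emptyset$ and $G^{(0)}$, so the hypothesis of Lemma~\ref{Lem:Cancellative} --- that no restriction $G_D$ to a closed invariant set $D$ is isomorphic to some $\mathcal{R}_n$ --- collapses exactly to the single condition that $G$ itself is not isomorphic to any $\mathcal{R}_n$. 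I would therefore split into two cases. If $G\not\cong\mathcal{R}_n$ for all $n$, then Lemma~\ref{Lem:Cancellative} applies directly and gives that $S(G)^*\cup\{0\}=S(G)$ is cancellative, and we are done with this part.

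The remaining case is the degenerate one $G\cong\mathcal{R}_n$, which Lemma~\ref{Lem:Cancellative} deliberately excludes and which is the single point one must not overlook; here I would simply compute $S(G)$ by hand. The unit space $\{1,\dots,n\}$ is finite, every subset is compact open, and the compact open bisections are precisely the partial bijections of $\{1,\dots,n\}$. Two functions $f_1,f_2\in C(G^{(0)},\Z)^+$ satisfy $f_1\sim_G f_2$ if and only if $\sum_j f_1(j)=\sum_j f_2(j)$: a bisection $W_i$ matches $s(W_i)$ bijectively with $r(W_i)$, which forces the sums to agree, and conversely, given equal sums $m$, one lists the points with multiplicity and transports them one at a time via the singleton bisections $\{(b_\ell,a_\ell)\}$. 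Thus $[f]\mapsto\sum_j f(j)$ is an isomorphism $S(\mathcal{R}_n)\cong\Z^+$, which is cancellative. In either case $S(G)$ is a cancellative conical monoid, so $S(G)\to C(S(G))$ is an isomorphism, and composing with the isomorphism $C(S(G))\cong H_0(G)^+$ of Proposition~\ref{Prop:Cancellative hull of type semigroup} yields $S(G)\cong H_0(G)^+$. The only genuinely new computation is the $\mathcal{R}_n$ case; everything else is an assembly of the cited results.
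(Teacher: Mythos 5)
Your proof is correct and follows essentially the same route as the paper's: the paper also splits into the elementary case $G\cong\mathcal{R}_n$ (where it notes $S(G)=\N_0$, matching your hand computation) and the case $G\ncong\mathcal{R}_n$ (where it invokes Lemma~\ref{Lem:Cancellative}), and then concludes via Proposition~\ref{Prop:Cancellative hull of type semigroup}. The only difference is that you spell out steps the paper leaves implicit --- that minimality collapses the hypothesis of Lemma~\ref{Lem:Cancellative} to $G\ncong\mathcal{R}_n$ and makes $S(G)=S(G)^*\cup\{0\}$ via Lemma~\ref{Lemma:Order units}, and that cancellativity forces $S(G)\to C(S(G))$ to be an isomorphism --- which is a faithful filling-in rather than a different argument.
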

\begin{proof}
If $G$ is elementary i.e. $G\cong \mathcal{R}_n$ for some $n\in \N$, we have $S(G)=\N_0$ which is obviously cancellative. So let us assume that $G\ncong \mathcal{R}_n$. In this case, we apply Lemma~\ref{Lem:Cancellative} to obtain that $S(G)$ is cancellative. In both cases the result now follows from Proposition \ref{Prop:Cancellative hull of type semigroup}.
\end{proof}
 
 The results of this section so far indicate that almost finiteness itself does not lead to interesting properties of the whole type semigroup, but just to the subsemigroup of order units.
 
 This is largely due to the following fact:
 In contrast to the permanence property shown in Lemma \ref{lem:dyn-comparison-forrestrictions} for dynamical comparison, almost finiteness does not pass to the restrictions of $G$ to arbitrary compact open subsets of $G^{(0)}$ in general. In fact, we will build examples exhibiting this behaviour in section \ref{Section:CoarseGeometry}. To remedy this situation, we make the following definition:
 \begin{defi}\label{def:StronglyAF}
 	We say that an ample groupoid $G$ is {\it strongly almost finite} if the restriction $G|_A$ is almost finite for all compact open subsets $A$ of $G^{(0)}$.
 \end{defi}
We remark that our notion of strong almost finiteness should not be confused with  \cite[Definition 1.4]{1812.07511}, which is related but ultimately different.  
  
Clearly, every AF groupoid is strongly almost finite.
If $G$ is minimal and has a compact unit space, then our notion is equivalent to almost finiteness in the usual sense by Proposition \ref{Prop:MoritaInvariance}. However, in general, our notion is strictly stronger than almost finiteness. 

The remaining part of the section is dedicated to show that strong almost finiteness implies dynamical comparison of $G$ and almost unperforation of $S(G)$ (i.e. Theorem \ref{Thm:B}).   
 We need the following elementary lemma. Note that the lemma follows from \cite[Corollary 5.8]{rainone_sims} in case the set $U$ in its statement is $\sigma$-compact. 
 
 \begin{lem}
  \label{lem:Smorita-inv} Let $G$ be an ample groupoid, and let $B$ be a compact open subset of $G^{(0)}$. Then $S(G|_B)\cong S(G_U)$, where $U=r(GB)$ is the open invariant subset of 
  $G$ generated by $B$. 
   \end{lem}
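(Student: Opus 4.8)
The plan is to construct an explicit isomorphism, which has the advantage of avoiding the $\sigma$-compactness hypothesis needed to quote \cite[Corollary~5.8]{rainone_sims}. First I would reduce to the case where $B$ is \emph{full} in the unit space. Put $H:=G_U$, so that $H^{(0)}=U$ and, since $B\subseteq U$, the restrictions coincide: $G|_B=H|_B$ and hence $S(G|_B)=S(H|_B)$. Moreover, because $U$ is $G$-invariant, any $g\in G$ with $s(g)\in B\subseteq U$ already lies in $H$, so $GB=HB$ and therefore $r(HB)=r(GB)=U=H^{(0)}$; that is, $B$ is $H$-full. The lemma thus reduces to the statement that $S(H|_B)\cong S(H)$ whenever $B$ is a full compact open subset of the unit space of an ample groupoid $H$.

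Next I would define the candidate isomorphism $\phi:S(H|_B)\to S(H)$ as the map induced by extending functions in $C_c(B,\Z)^+$ by zero to functions in $C_c(H^{(0)},\Z)^+$. This is a well-defined monoid homomorphism for the easy reason that every compact open bisection of $H|_B$ is, tautologically, a compact open bisection of $H$, so $f\sim_{H|_B}f'$ forces $f\sim_H f'$. The same observation, read in reverse, yields injectivity: suppose $f,f'\in C_c(B,\Z)^+$ satisfy $\phi([f])=\phi([f'])$, so that $f=\sum_{i=1}^n 1_{s(W_i)}$ and $f'=\sum_{i=1}^n 1_{r(W_i)}$ for compact open bisections $W_1,\dots,W_n$ of $H$. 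Since the summands are nonnegative, $s(W_i)\subseteq\supp(f)\subseteq B$ and $r(W_i)\subseteq\supp(f')\subseteq B$ for every $i$; consequently each $W_i$ is entirely contained in $H|_B$, and these very bisections witness $f\sim_{H|_B}f'$, so $[f]=[f']$ in $S(H|_B)$.

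The only place where fullness is genuinely used is surjectivity, which I regard as the main (though still routine) step. Since the image of $\phi$ is a submonoid, it suffices to show $[1_A]\in\operatorname{im}(\phi)$ for every compact open $A\subseteq U$. For each $x\in A$, fullness provides $g\in H$ with $s(g)\in B$ and $r(g)=x$; intersecting a compact open bisection through $g$ with the clopen sets $s^{-1}(B)$ and $r^{-1}(A)$ (using that $G^{(0)}$ is totally disconnected and $r,s$ are local homeomorphisms) produces a compact open bisection $W$ with $x\in r(W)\subseteq A$ and $s(W)\subseteq B$. By compactness of $A$ finitely many such ranges cover $A$, and a standard disjointification yields compact open bisections $W_1',\dots,W_m'$ with $A=\bigsqcup_{l=1}^m r(W_l')$ and $s(W_l')\subseteq B$. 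The relation $[s(W_l')]=[r(W_l')]$ then gives
$$[1_A]=\sum_{l=1}^m[1_{r(W_l')}]=\sum_{l=1}^m[1_{s(W_l')}]=\phi\Big(\sum_{l=1}^m[1_{s(W_l')}]\Big)$$
in $S(H)$, so that $[1_A]\in\operatorname{im}(\phi)$. Combining the three steps shows $\phi$ is an isomorphism, and unwinding the reduction gives $S(G|_B)\cong S(G_U)$ as claimed.
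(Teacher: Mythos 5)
Your proof is correct, and it takes a genuinely different route from the paper's. The paper works with the generators-and-relations presentation of the type semigroup (from \cite[Proposition 7.3]{ABPS19}): it defines a map $\varphi\colon S(G_U)\to S(G|_B)$ on generators by choosing a decomposition $A=\bigsqcup_i r(W_i)$ with $s(W_i)\subseteq B$ and setting $\varphi([A])=\sum_i[s(W_i)]$, then checks independence of the chosen decomposition and compatibility with the defining relations, and finally verifies that $\varphi$ and the obvious map $\psi\colon S(G|_B)\to S(G_U)$, $[D]\mapsto[D]$, are mutually inverse. You instead prove directly that the obvious map (your $\phi$, the paper's $\psi$) is bijective. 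Your surjectivity step uses the same decomposition $A=\bigsqcup_l r(W_l')$ that the paper uses to define $\varphi$, but since you only need \emph{existence} of a preimage you are spared the independence-of-decomposition computation; your injectivity step rests on the nice observation that any family of bisections of $H$ witnessing an equivalence between functions supported in $B$ is automatically contained in $H|_B$, because the sources and ranges of the witnessing bisections are forced into the supports. The trade-off is that this injectivity argument leans on the concrete ``one-step'' description of equality in $S(H)$, i.e.\ that the relation $\sim_H$ as defined (existence of a single witnessing family of bisections) is already transitive rather than merely generating an equivalence relation. That is exactly how the paper defines $\sim_G$, so your argument is valid within its conventions, but the paper's route through the universal property of the presentation never needs to unwind what equality in the type semigroup concretely means, and is therefore insensitive to this point. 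Your preliminary reduction to the case of a full subset (via $G|_B=(G_U)|_B$ and $r(G_U B)=U$) is correct and tidies the bookkeeping; the paper handles the same issue implicitly by working with $G_U$ throughout.
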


 \begin{proof}
  We will use the generators and relations picture of the type semigroup introduced prior to Lemma \ref{lem:RS-forgroupoids} to define a semigroup homomorphism $\varphi \colon S(G_U) \to S(G|_B)$ as follows. Let $A$ be a compact open subset of $U=r(GB)$. Then there are compact open bisections $W_1,\dots , W_n$ such that
  $A= \bigsqcup_{i=1}^n r(W_i)$ and $s(W_i)\subseteq B$. Then set $\varphi ([A]) = \sum_{i=1}^n [s(W_i)]\in S(G_B)$. Suppose that $A= \bigsqcup_{j=1}^m r(W_j')$ 
  for compact open bisections $W_j'$ such that $s(W_j')\subseteq B$. Then for each $1\le i \le n$, we have  $s(W_i) = \bigsqcup_{j=1}^m \theta_{W_i}^{-1}(r(W_i)\cap r(W_j'))$     
 and for each $1\le j\le m$, we have $s(W_j') =\bigsqcup_{i=1}^n \theta_{W_j'}^{-1}(r(W_i)\cap r(W_j'))$. Moreover $[\theta _{W_i}^{-1}(r(W_i)\cap r(W_j'))] = 
 [\theta_{W_j'}^{-1}(r(W_i)\cap r(W_j'))]$ in $S(G_B)$. Therefore we get
 $$\sum_{i=1}^n [s(W_i)] = \sum _{i=1}^n \sum _{j=1}^m [\theta_{W_i}^{-1}(r(W_i)\cap r(W_j'))] = \sum_{j=1}^m \sum_{i=1}^n [\theta_{W_j'}^{-1}(r(W_i)\cap r(W_j'))]= \sum_{j=1}^m [s(W_j')].$$
 This shows that $\varphi ([A])$ does not depend of the particular decomposition of $A$. It is straightforward to show that $\varphi$ induces a semigroup homomorphism.
 Indeed, if $A\cap A'= \emptyset$, then we clearly get that $\varphi ([A\cup A']) = \varphi ([A]) + \varphi ([A'])$. If $V$ is a compact open bisection and $s(V)\subseteq U$, then 
 write $s(V)= \bigsqcup_{i=1}^n r(W_i) $ for compact open bisections such that $s(W_i)\subseteq B$. Then $r(V) = \bigsqcup_{i=1}^n r(VW_i)$ and $s(VW_i) = s(W_i)\subseteq B$.
 Therefore we obtain
 $$\varphi ([r(V)]) = \sum _{i=1}^n [r(VW_i)]=\sum _{i=1}^n [s(VW_i)] = \sum_{i=1}^n [s(W_i)] = \varphi ([s(V)]), $$
 and so the relation $[r(V)]= [s(V)]$ is also preserved by $\varphi$. 
 
 In the other direction, we can clearly define a homomorphism $\psi \colon S(G|_B)\to S(G_U)$ by $\psi ([D]) = [D]$ for a compact open subset $D$ of $B$. The maps $\varphi$ and $\psi$ are easily seen to be mutually inverse.
 This concludes the proof.
  \end{proof}

\begin{lem}\label{Lem:StrongComparison}
If $G$ is a strongly almost finite ample groupoid, then $G$ satisfies dynamical comparison. 
\end{lem}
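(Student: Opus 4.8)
The plan is to deduce dynamical comparison of $G$ from the measure-comparison available on the \emph{compact} restrictions $G|_B$. Fix nonempty compact open sets $A,B\subseteq G^{(0)}$ with $A\subseteq r(GB)$ and $\mu(A)<\mu(B)$ for every $\mu\in UM(G)$ with $0<\mu(B)<\infty$; I must produce $A\precsim B$. Put $U:=r(GB)$, an open $G$-invariant set containing both $A$ and $B$. Since dynamical subequivalence translates into the inequality $[1_A]\le[1_B]$ in the type semigroup, and any bisections witnessing such an inequality inside $G_U$ are already bisections of $G$, it suffices to establish $[1_A]\le[1_B]$ in $S(G_U)$. The point of passing to $U$ is that, by strong almost finiteness, the restriction $G|_B$ is almost finite with \emph{compact} unit space $B$, so Lemma~\ref{Lem:Comparison} is available for $G|_B$, while Lemma~\ref{lem:Smorita-inv} supplies an isomorphism $\varphi\colon S(G_U)\xrightarrow{\ \cong\ }S(G|_B)$.

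Next I would transport the two relevant elements across $\varphi$. As $A\subseteq U=r(GB)$, choose compact open bisections $W_1,\dots,W_n$ of $G$ with $A=\bigsqcup_{i=1}^n r(W_i)$ and $s(W_i)\subseteq B$, and set $f:=\sum_{i=1}^n 1_{s(W_i)}\in C(B,\Z)^+$. By the very definition of $\varphi$ in Lemma~\ref{lem:Smorita-inv} one has $\varphi([1_A])=[f]$ and $\varphi([1_B])=[1_B]$. Since $\varphi$ is a monoid isomorphism, and the algebraic preorder is determined by the monoid structure alone, the goal reduces to proving $[f]\le[1_B]$ in $S(G|_B)$.

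To invoke Lemma~\ref{Lem:Comparison} on $G|_B$ I need the strict inequality $\mu_B(f)<\mu_B(1_B)=1$ for every $\mu_B\in M(G|_B)$, and here lies the crux: I must turn an invariant probability measure on $B$ into a genuine invariant Borel measure on $G^{(0)}$ in order to feed it into the comparison hypothesis. Given $\mu_B\in M(G|_B)$, I would first spread it over $U$ by the rule $\tilde\mu(D)=\sum_j\mu_B(s(V_j))$ whenever $D=\bigsqcup_j r(V_j)$ is a compact open subset of $U$ with $s(V_j)\subseteq B$; well-definedness and invariance are exactly the computation already performed in Lemma~\ref{lem:Smorita-inv}, so $\tilde\mu$ is a finitely additive invariant measure with $\tilde\mu(B)=1$ and $\tilde\mu(A)=\sum_i\mu_B(s(W_i))=\mu_B(f)$. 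Concentrating it on $U$ yields an element of $F(S(G))$, which I then promote to an honest Borel measure $\hat\mu\in UM(G)$ satisfying $\hat\mu(B)=1$ and $\hat\mu(A)=\mu_B(f)$; in the second countable case this promotion is Lemma~\ref{lem:RS-forgroupoids}, and in general it is precisely where the measure-extension results of Appendix~\ref{sect:ext-measures} are required. Since $0<\hat\mu(B)=1<\infty$, the comparison hypothesis forces $\mu_B(f)=\hat\mu(A)<\hat\mu(B)=1$.

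Finally, with the measure inequality in hand for every $\mu_B\in M(G|_B)$ (and $M(G|_B)\neq\emptyset$ because $G|_B$ is almost finite), Lemma~\ref{Lem:Comparison} gives $[f]\le[1_B]$ in $S(G|_B)$, and transporting this back through $\varphi$ produces $[1_A]\le[1_B]$ in $S(G_U)$, hence $A\precsim B$ in $G$. I expect the genuine difficulty to be concentrated in the third paragraph: establishing the surjective half of the correspondence between $M(G|_B)$ and the invariant Borel measures on $G^{(0)}$ that are finite and nonzero on $B$, i.e. producing the Borel measure $\hat\mu$ from $\mu_B$ \emph{without} any second countability assumption. Everything else is bookkeeping around Lemmas~\ref{Lem:Comparison} and~\ref{lem:Smorita-inv} together with the relation $[1_{s(W_i)}]=[1_{r(W_i)}]$.
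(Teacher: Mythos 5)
Your proof is correct and takes essentially the same approach as the paper's: reduce to the almost finite restriction $G|_B$, use Proposition~\ref{prop:extmeasUG} from the appendix to promote each $\mu_B\in M(G|_B)$ to an invariant Borel measure in $UM(G)$ (so that the comparison hypothesis yields the strict inequality $\mu_B(f)<1$), and conclude with Lemma~\ref{Lem:Comparison}. The only cosmetic difference is that you feed the function $f=\sum_{i}1_{s(W_i)}$ into Lemma~\ref{Lem:Comparison} through the isomorphism $S(G_U)\cong S(G|_B)$ of Lemma~\ref{lem:Smorita-inv}, whereas the paper passes explicitly to $(G|_B)^m$ and compares the indicators of $D=\bigsqcup_{i} s(V_i)\times\{i\}$ and $B\times\{1\}$ --- the same maneuver, since the proof of Lemma~\ref{Lem:Comparison} itself begins by passing to $G^m$.
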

\begin{proof}
	Let $A,B$ be compact open subsets of $G^{(0)}$ such that $A\subseteq r(GB)$, and assume that 
	$\mu (A) < \mu (B)$ for each $\mu \in UM(G)$ such that
	$0< \mu (B) <\infty$. We will show that $[1_A]\leq [1_B]$ in $S(G)$.
	
	Since $A\subseteq r(GB)$, there exist compact open bisections $V_1,V_2,\dots , V_m$ in $G$ such that $A=\bigsqcup_{i=1}^m r(V_i)$ and $s(V_i)\subseteq B$ for all $i$. 
	Now observe that $A\times \{ 1\}\sim D:= \bigsqcup_{i=1}^m s(V_i)\times \{ i \}$ within $G^m$. Note that $D$ is a compact open subset of $B^m= ((G|_B)^m)^{(0)}$,
	and that $(G|_B)^m$ is almost finite, because $G$ is strongly almost finite and almost finiteness is Morita invariant. 
	
	We next show that $\mu (D) < \mu (B\times \{ 1\})$ for all $\mu \in M((G|_B)^m)$. For this, we will use the results about Borel measures developed in Appendix \ref{sect:ext-measures}.
	
	Let $\mu \in M((G|_B)^m)$, and let $\mu '\in M(G|_B)$ be the invariant measure defined by
	$\mu ' (T) = m \mu (T\times \{ 1\})$. Then, by Proposition \ref{prop:extmeasUG} there exists $\widehat{\mu}\in UM(G)$ such that $\mu'(V)=\widehat{\mu}(V)$ for all open subsets $V\subseteq B$. Since $\widehat{\mu}$ is $G$-invariant, we have that 
	$$m\mu (D) = \sum_{i=1}^m \mu ' (s(V_i)) = \sum_{i=1}^m \widehat{\mu} (s(V_i)) = \sum _{i=1}^m\widehat{\mu}(r(V_i)) =\widehat{\mu} (A) < \widehat{\mu} (B) = \mu' (B) = m\mu (B\times \{ 1 \}),  $$
	as desired. Therefore we get that $\mu (D) < \mu (B\times \{ 1 \})$ for all $\mu \in M((G_B)^m)$. If we show that $D\precsim B\times \{1\}$ within $(G|_B)^m$, then clearly we will get that
	$A\precsim B$ within $G$.  
	
	Therefore, changing notation we can assume that $A,B$ are compact open subsets of $G^{(0)}$, that $B$ is $G$-full, and that $\mu (A) <\mu (B)$ for all $\mu \in M(G)$. In this situation, the result follows from
	Lemma \ref{Lem:Comparison}.
\end{proof}

We can now obtain our second main result of this section, i.e. Theorem \ref{Thm:B}.

 \begin{proof}[Proof of Theorem \ref{Thm:B}]
  By Lemma \ref{lem:a-unp-implies-dyn-comparison}, it suffices to show that $G$ satisfies stable dynamical comparison. Now by Lemma \ref{Lem:StrongComparison}, it suffices to show that $G^m$ is strongly almost finite
  for each $m\ge 1$.
  Let $B=\bigsqcup_{i=1}^m B_i\times \{ i \}$ be a compact open subset of $(G^m)^{(0)}$, where each $B_i$ is a compact open subset of $G^{(0)}$. Let $D=\bigcup_{i=1}^m B_i$.
  Then $D$ is a compact open subset of $G^{(0)}$ and clearly $(G^m)|_B$ and $G|_D$ are stably isomorphic. Hence $(G^m)|_B$ is almost finite by Proposition \ref{Prop:MoritaInvariance}.
  This shows that $G^m$ is strongly almost finite, and the proof is complete. 
   \end{proof}

\section{Coarse geometry}\label{Section:CoarseGeometry}
In this section we establish a new link between regularity properties in topological dynamics and coarse geometry. The starting point is the following recent result on the structure of amenable groups:
\begin{thm}\label{Theorem:Tilings-of-amenable-groups}\rm{(\cite{DHZ})}
	Let $\Gamma$ be a countable amenable group. Then $\Gamma$ admits an exact tiling into F\o lner sets of arbitrary invariance, i.e. for every finite subset $K\subseteq \Gamma$ and $\varepsilon>0$ there exist a number $n\in \N$, finite $(K,\varepsilon)$-invariant subsets $S_1,\ldots,S_n\subseteq \Gamma$ (the shapes) and  $F_1,\ldots, F_n$ of $\Gamma$ (the centers), such that $$\Gamma=\bigsqcup\limits_{i=1}^n\bigsqcup\limits_{\gamma\in S_i} \gamma F_i.$$
\end{thm}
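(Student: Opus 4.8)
The plan is to derive the statement from the Ornstein--Weiss quasi-tiling machinery, upgrading an approximate tiling to an exact one. First I would dispose of the trivial case: if $\Gamma$ is finite one takes $n=1$, $S_1=\Gamma$ and $F_1=\{e\}$, so from now on assume $\Gamma$ is infinite.

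The engine is the classical Ornstein--Weiss quasi-tiling theorem. Fixing the target pair $(K,\varepsilon)$, I would first choose a nested sequence of F\o lner sets $T_1\subseteq T_2\subseteq\cdots\subseteq T_m$, each $(K,\varepsilon)$-invariant, with each $T_{j+1}$ so much more invariant than $T_j$ that $T_j$ has negligible boundary relative to it. Ornstein--Weiss then guarantees that any sufficiently invariant finite region can be covered by $\varepsilon$-disjoint left translates of the $T_j$ leaving out at most an $\varepsilon$-fraction. The first reduction is to make these translates \emph{genuinely} disjoint: processing the chosen translates from the largest shape to the smallest, replace each newly placed $\gamma T_j$ by the part of it not yet covered. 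Since the family was $\varepsilon$-disjoint, each truncation discards only a small proportion of $\gamma T_j$, so the truncated piece is a translate of a set that is still $(K,O(\varepsilon))$-invariant; adjoining all such corrected sets to the (still finite) list of admissible shapes, I obtain \emph{disjoint} translates of F\o lner shapes covering a $(1-\varepsilon)$-fraction of the region.

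The main obstacle is to pass from ``covers all but an $\varepsilon$-fraction'' to an \emph{exact} partition of the whole of $\Gamma$, while keeping every shape $(K,\varepsilon)$-invariant. Here I would iterate the previous step along a F\o lner exhaustion $\Gamma=\bigcup_k F_k$, tiling each $F_k$ by disjoint invariant shapes up to a thin remainder and arranging the tilings to be compatible across levels, so that a tile placed at stage $k$ survives at all later stages. A compactness argument --- viewing tilings as points of a subshift over the finite shape alphabet and extracting a limit, equivalently a K\"onig's lemma argument on the tree of finite partial tilings --- then produces a globally disjoint family of shape-translates whose union covers all of $\Gamma$ except a residual set $U$ of upper density zero (obtained by iterating the quasi-tiling on successive remainders). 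The delicate point, and the genuine content of Downarowicz--Huczek--Zhang, is the removal of $U$: one shows the residual points can be \emph{absorbed} into neighbouring tiles, assigning each point of $U$ to a nearby tile by a bounded, injective (marriage-type) matching and enlarging that tile accordingly. Because $U$ is sparse and the tiles are F\o lner, the enlargements change each shape only by a vanishing fraction, so the enlarged shapes remain $(K,\varepsilon)$-invariant and can be added to the finite menu; after absorption the tiles are disjoint and cover \emph{every} point of $\Gamma$.

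I expect the appeal to Ornstein--Weiss and the disjointification to be essentially routine bookkeeping. The hard part will be the exact filling: controlling the residual set $U$ and carrying out the absorption/matching so that no point is left uncovered and every shape retains the prescribed invariance simultaneously --- this is exactly the hierarchical combinatorial argument that constitutes the heart of the cited result.
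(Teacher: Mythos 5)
This theorem is not proved in the paper at all: it is quoted verbatim from \cite{DHZ} and used as a black box (e.g.\ in Corollary \ref{universal minimal}), so there is no internal proof to compare your attempt against; I therefore assess the sketch on its own terms. As a roadmap it is faithful to how the cited result is actually obtained --- Ornstein--Weiss quasi-tilings, disjointification with a controlled loss of invariance (the estimate $\abs{KA\setminus A}\leq \abs{KS\setminus S}+(\abs{K}+1)\abs{S\setminus A}$ makes your ``truncated shapes stay $(K,O(\varepsilon))$-invariant'' step routine, and the enlarged/truncated shapes do form a finite menu since they live inside the fixed finite sets $S_i\cup\partial S_i$), followed by an upgrade from approximate to exact tilings.

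However, as a proof the proposal has a genuine gap, and it is exactly the step you yourself label ``the genuine content of Downarowicz--Huczek--Zhang'': the absorption of the residual set $U$ is asserted, not executed, and the justification offered (``because $U$ is sparse and the tiles are F\o lner'') is a non sequitur as stated. The remainder left by the quasi-tiling machinery is small only in \emph{upper Banach density}: no window of F\o lner scale meets it in more than an $\varepsilon$-fraction, but at small scales $U$ can cluster arbitrarily --- entire balls may be uncovered --- so a bounded-displacement matching with small per-tile load does not exist for free. One must fix capacities of order $\varepsilon\abs{S_i}$ per tile, run the quasi-tiling at a tolerance $\varepsilon'\ll\varepsilon$, verify a Hall-type condition for every finite set of residual points using invariance of suitable windows, and invoke the infinite marriage theorem; none of this is in the proposal. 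The same issue undercuts your intermediate compactness step: tilings of the members of a F\o lner exhaustion are not consistent under naive re-tiling, so K\"onig's lemma produces a limit only after the construction is made canonical (this is precisely why \cite{DHZ} build \emph{congruent} systems of tilings, each tile of one level being a union of tiles of the previous level), and even then the limit covers only up to density zero, returning you to the absorption problem. Since the conclusion of the theorem is exactly that \emph{no} point is left over, deferring this step back to the citation means the theorem has not been proved.
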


Amenability for groups has a straightforward generalization to more general metric spaces. For the purposes of this work we restrict ourselves to those metric spaces $(X,d)$ with \textit{bounded geometry} (meaning that for any radius $R>0$ we have $\sup_{x\in X}\abs{B_R(x)}<\infty$) for reasons that will become clear shortly. To define amenability, we need the following notation:
For a finite subset $F\subseteq X$ we will write
$$\partial_R^+(F)=\lbrace x\in X\setminus F\mid d(x,F)\leq R\rbrace$$
for what is often called the \textit{outer $R$-boundary of $F$.}

\begin{defi}
	Let $X$ be a bounded geometry metric space. Then $X$ is called \emph{amenable} if for every $R>0$ and $\varepsilon>0$ there exists a finite set $F\subseteq X$, such that $\abs{\partial_R^+(F)}<\varepsilon \abs{F}$.
\end{defi}

A set $F$ as in the definition above is often referred to as an $(R,\varepsilon)$-F\o lner set. Now Theorem~\ref{Theorem:Tilings-of-amenable-groups} says that every amenable group does not just admit F\o lner sets of arbitrary invariance, but can be completely decomposed into F\o lner sets of arbitrary invariance.
The following definition is a version of the latter property for arbitrary metric spaces of bounded geometry.
\begin{defi}\label{tilings of arbitrary invariance}
	Let $X$ be a bounded geometry metric space. We say that $X$ \textit{admits tilings of arbitrary invariance}, if for all $R>0$ and $\varepsilon>0$ there exists a partition $X=\bigsqcup_{i\in I} X_i$ of $X$, such that $\abs{\partial_R^+(X_i)}<\varepsilon {\abs{X_i}}$ for all $i\in I$ and $\sup_{i\in I} diam(X_i)<\infty$.
\end{defi}

Let us illustrate this property by considering the following elementary example:
\begin{exam}
	We will show that the integers $\Z$ viewed as a discrete metric space with respect to the euclidean metric admits tilings of arbitrary invariance.
	The main point is that if $I$ is an interval in $\Z$ then the number $\abs{\partial_R^+(I)}$ is at most $2R$, and hence independent of the size and position of the chosen interval. Hence, given $R>0$, $\varepsilon > 0$, fix a natural number $N>\frac{2R}{\varepsilon}$ and partition $\Z$ into intervals $\Z=\bigsqcup_n I_n$ such that $\abs{I_n}=N$ for all $n\in \N$. Then each $I_n$ is an $(R,\varepsilon)$-F\o lner set by our choice of $N$ and $diam(I_n)\leq N$ since each $I_n$ is an interval, so we are done.
\end{exam}

Clearly, admitting tilings of arbitrary invariance is a very strong form of amenability.
As already explained, it was the tiling result for amenable groups that inspired the definition above. Indeed, every countable discrete group $\Gamma$ can be equipped with a proper left-invariant metric $d$ that is unique up to bijective coarse equivalence \cite[Lemma~2.1]{MR1871980}. The simplest examples are finitely generated discrete groups equipped with word metrics.

In particular, in the case of a countable discrete group equipped with any proper left-invariant metric, Theorem~\ref{Theorem:Tilings-of-amenable-groups} tells us that admitting tilings of arbitrary invariance is in fact equivalent to amenability of the group.

We are ready to establish the connection of this tiling property to regularity properties in topological dynamics. To this end, we use a construction of Skandalis, Tu, and Yu in \cite{zbMATH01761014}, which associates to every (discrete) metric space $X$ of bounded geometry a groupoid $G(X)$ 
over the Stone-\v{C}ech compactification $\beta X$ of $X$. Let us recall this construction:
For any radius $R\geq 0$ let $\Delta_R=\lbrace (x,y)\in X\times X\mid d(x,y)\leq R\rbrace$ be the $R$-neighbourhood of the diagonal in $X\times X$ and let $\overline{\Delta_R}$ denote its closure in $\beta(X\times X)$. Recall that we  identify any subset $S\subset X\times X$ as the corresponding set of principal ultrafilters in $\beta(X\times X)$. Then, as a set, one defines
$$G(X)=\bigcup_{R\geq 0} \overline{\Delta_R}.$$
Equip $G(X)$ with the weak topology it inherits from the union of compact open sets $\overline{\Delta_R}$ and with the groupoid structure it inherits as a subset of the pair groupoid $\beta X\times \beta X$.
It was shown in \cite[Proposition~3.2]{zbMATH01761014} that with the structure described above, $G(X)$ is a principal ample locally compact
$\sigma$-compact Hausdorff groupoid with $G(X)^{(0)}=\beta X$. We call $G(X)$ the \textit{coarse groupoid} associated to the metric space $X$.

The following is the main result of this section:
\begin{thm}\label{Thm:AlmostFiniteCoarseGroupoid}
	Let $X$ be a bounded geometry metric space. Then the following are equivalent:
	\begin{enumerate}
		\item $G(X)$ is almost finite,
		\item $X$ admits tilings of arbitrary invariance.
	\end{enumerate}
	In particular, $G(X)$ is strongly almost finite if and only if every subspace of $X$ admits tilings of arbitrary invariance.
	
\end{thm}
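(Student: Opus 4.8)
The plan is to set up a precise dictionary between elementary subgroupoids of $G(X)$ and uniformly bounded tilings of $X$, under which the $(\overline{\Delta_R},\varepsilon)$-invariance condition becomes exactly the Følner condition $\abs{\partial_R^+(X_i)}<\varepsilon\abs{X_i}$. First I would record three structural facts about the coarse groupoid. (i) Since $G(X)=\bigcup_R\overline{\Delta_R}$ is an increasing union of compact open sets, every compact $C\subseteq G(X)$ lies in some $\overline{\Delta_R}$; as $(\overline{\Delta_R},\varepsilon)$-invariance implies $(C,\varepsilon)$-invariance for $C\subseteq\overline{\Delta_R}$, it suffices throughout to work with $C=\overline{\Delta_R}$. (ii) For any compact open $V\subseteq G(X)$ the function $x\mapsto\abs{Vx}$ is locally constant on $\beta X$, hence takes finitely many values, all attained on the dense subset $X$. (iii) Over a principal point $x\in X$ the source-fibre is principal, $G(X)_x\subseteq X\times X$; concretely, writing $G(X)\subseteq\beta X\times\beta X$ as the pair groupoid with $s(a,b)=b$, $r(a,b)=a$, one has $\overline{\Delta_R}\,x=\{(b,x):d(b,x)\le R\}$.

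Next I would build the dictionary. Given a tiling $X=\bigsqcup_i X_i$ with $\sup_i diam(X_i)\le S<\infty$, bounded geometry makes each tile finite of bounded cardinality, and the tiling equivalence relation $E=\{(x,y):x,y\text{ lie in a common tile}\}\subseteq\Delta_S$ is a principal subgroupoid of the pair groupoid on $X$. Its closure $K:=\overline E$ is a compact open subset of $\overline{\Delta_S}$ (using that $E$ is clopen in the discrete space $X\times X$); it is a subgroupoid because inversion is a homeomorphism with $E^{-1}=E$, multiplication is continuous with $E\cdot E\subseteq E$, and $\overline E\supseteq\overline{\Delta_X}=\beta X$ supplies all the units. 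Since $G(X)$ is principal, $K$ is automatically principal, so $K$ is an elementary subgroupoid with $K^{(0)}=G(X)^{(0)}$. Conversely, an elementary $K\subseteq\overline{\Delta_S}$ yields $E:=K\cap(X\times X)$, which by (iii) is an equivalence relation contained in $\overline{\Delta_S}\cap(X\times X)=\Delta_S$, hence a tiling of $X$ by sets of diameter $\le S$.

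The computation linking the two invariance conditions is the heart of the argument, and I would carry it out for $x\in X$ in its tile $X_{i(x)}$ with $C=\overline{\Delta_R}$. By (iii), $Kx=\{(a,x):a\in X_{i(x)}\}$, so $\abs{Kx}=\abs{X_{i(x)}}$, while $\overline{\Delta_R}\,Kx$ has range $\{b:d(b,X_{i(x)})\le R\}=X_{i(x)}\sqcup\partial_R^+(X_{i(x)})$; since $\overline{\Delta_R}$ contains the units, $Kx\subseteq CKx$ and hence $\abs{CKx\setminus Kx}=\abs{\partial_R^+(X_{i(x)})}$. Thus
$$\frac{\abs{CKx\setminus Kx}}{\abs{Kx}}=\frac{\abs{\partial_R^+(X_{i(x)})}}{\abs{X_{i(x)}}}\qquad(x\in X).$$
For $(2)\Rightarrow(1)$, given $C\subseteq\overline{\Delta_R}$ and $\varepsilon>0$ I take an $(R,\varepsilon)$-invariant tiling, build $K$ as above, and read off from the displayed identity that the ratio is $<\varepsilon$ on all of $X$; by (ii) it is locally constant with finitely many values, each attained on $X$, so it is $<\varepsilon$ on all of $\beta X$, giving $(C,\varepsilon)$-invariance. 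For $(1)\Rightarrow(2)$, given $R,\varepsilon$ I apply almost finiteness to $C=\overline{\Delta_R}$ to obtain a $(\overline{\Delta_R},\varepsilon)$-invariant elementary $K\subseteq\overline{\Delta_S}$, extract the tiling $E=K\cap(X\times X)$, and use the displayed identity (valid for every $x\in X$) to conclude $\abs{\partial_R^+(X_i)}<\varepsilon\abs{X_i}$ for every tile, with $\sup_i diam(X_i)\le S$.

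For the final \emph{in particular} clause I would use that the compact open subsets of $\beta X$ are exactly the sets $\overline Y$ for $Y\subseteq X$ (with $\overline Y\cong\beta Y$), and that the coarse groupoid is compatible with passing to subspaces, $G(X)|_{\overline Y}\cong G(Y)$. The latter follows from the identity $\overline{A\cap U}=\overline A\cap U$ for clopen $U$ in a compact Hausdorff space, applied in $\beta(X\times X)$ to obtain $\overline{\Delta_R^X}\cap(\overline Y\times\overline Y)=\overline{\Delta_R^{\,Y}}$ and hence $G(X)|_{\overline Y}=\bigcup_R\overline{\Delta_R^{\,Y}}=G(Y)$. Strong almost finiteness of $G(X)$ thus means $G(Y)$ is almost finite for every $Y\subseteq X$, which by the equivalence $(1)\Leftrightarrow(2)$ already established is exactly the statement that every subspace admits tilings of arbitrary invariance. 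I expect the genuine obstacle to be the careful treatment of the Stone–Čech boundary: the invariance condition is imposed at every $x\in\beta X$, including non-principal ultrafilters, whereas the Følner condition is purely combinatorial on $X$. Fact (ii) together with density of $X$ is what reduces the former to the latter, and fact (iii) is what makes the reduced condition literally coincide with the Følner condition.
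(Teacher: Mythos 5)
Your proposal is correct, and it reaches the theorem by a genuinely different route at the one point where the paper works hard. Both arguments set up the same dictionary: from an elementary $K\subseteq\overline{\Delta_S}$ one reads off the tiling by $K$-orbits of points of $X$ (your $E=K\cap(X\times X)$), and from a uniformly bounded F\o lner tiling one takes $K=\overline{E}$; the direction (1)$\Rightarrow$(2) and the verification of $(\overline{\Delta_R},\varepsilon)$-invariance at principal points $x\in X$ are then the same computation in both. The divergence is at non-principal ultrafilters $\omega\in\beta X\setminus X$: the paper invokes the \v{S}pakula--Willett limit-space machinery (compatible families of partial translations, the metric $d_\omega$ on $X(\omega)$, and isometries $f_y$ defined on a set of $\omega$-measure one) to transport $K\omega$ and its $R$-boundary isometrically onto a tile $X_i$ and its $R$-boundary inside $X$, whereas you observe that $\overline{\Delta_R}K$ is again compact open (a product of compact open sets in an ample Hausdorff groupoid is compact and open), so that $\omega\mapsto\abs{K\omega}$ and $\omega\mapsto\abs{\overline{\Delta_R}K\omega\setminus K\omega}$ are fibre-counting functions of compact open sets, hence locally constant on $\beta X$, and density of $X$ propagates the estimate from $X$ to all of $\beta X$. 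Your argument is shorter and purely groupoid-theoretic; notably the paper itself uses continuity of $\omega\mapsto\abs{K\omega}$ via the Haar system but never applies the same observation to the difference set, which is exactly what collapses its hard case. What the paper's longer route buys is the finer geometric fact that the orbit picture at a boundary point is literally isometric to a tile of $X$, which is of independent interest but not needed for the theorem. Two details you (like the paper) gloss over, neither affecting correctness: closedness of $\overline{E}$ under multiplication needs slightly more than continuity of multiplication --- decompose $E$ into finitely many partial translations (possible since tiles have uniformly bounded cardinality) and use that products of the corresponding bisections land back in $\overline{E}$ by transitivity of $E$; and in the final clause, $\overline{Y}\times\overline{Y}$ should be read as $s^{-1}(\overline{Y})\cap r^{-1}(\overline{Y})$, the identity with $\overline{\Delta_R\cap(Y\times Y)}$ holding because both sides are clopen in $\beta(X\times X)$ with the same trace on $X\times X$.
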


For the proof we need to recall some terminology and facts from \cite{MR3557774} and we are indebted to Rufus Willett for pointing us towards this article.
A \textit{partial translation} is a bijection $t:dom(t)\rightarrow ran(t)$ between two subsets $dom(t)$ and $ran(t)$ of $X$ such that $\sup_{x\in dom(t)} d(x,t(x))<\infty$. A partial translation $t$ is called \textit{compatible} with $\omega\in \beta X$ if $\omega(dom(t))=1$ (i.e. $\omega\in \overline{dom(t)}\subset \beta X$). Given $\omega\in \beta X$, and $t:dom(t)\to ran(t)\subseteq X$ a compatible partial translation, we use the notion of limit along the ultrafilter to define $$t(w):=\lim_\omega t\in \beta X.$$ In particular, for a fixed $\omega\in\beta X$, we say that an ultrafilter $\alpha\in \beta(X)$ is \textit{compatible} with $\omega$ if there exists a partial translation $t$ which is compatible with $\omega$ and satisfies $t(\omega)=\alpha$. We write $X(\omega)$ for the set of all $\alpha\in \beta X$ which are compatible with $\omega$. Note that there is a canonical bijection $F:X(\omega)\rightarrow G(X)_\omega$, given by $F(\alpha)=(\alpha,\omega)$.
The set $X(\omega)$ can be equipped with a canonical metric. Let $(t_\alpha)_{\alpha\in X(\omega)}$ be a compatible family of partial translations for $\omega$, i.e. each $t_\alpha$ is compatible with $\omega$ and $t_\alpha(\omega)=\alpha$. Then one can define
$$d_\omega(\alpha,\beta)=\lim\limits_{x\rightarrow\omega}d(t_\alpha(x),t_\beta(x)).$$
It was shown in \cite[Proposition~3.7]{MR3557774} that $d_\omega$ does indeed define a metric on $X(\omega)$ which does not depend on the choice of the compatible family.
Using this freedom in choosing the compatible family we observe the following:
\begin{lem}\label{Lemma:Metric} Let $\omega\in \beta X$ and $R\geq 0$. If $(\alpha,\omega)\in \overline{\Delta_R}$, then $d_\omega(\alpha,\omega)\leq R$.
\end{lem}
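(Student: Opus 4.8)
The plan is to exploit the well-known fact that $\overline{\Delta_R}$ is covered by the graph-closures of partial translations of small propagation, and then to read off a single such translation from the point $(\alpha,\omega)$, matching it against the definition of $d_\omega$.

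First I would recall the standard consequence of bounded geometry that $\Delta_R$ decomposes as a \emph{finite} disjoint union $\Delta_R=\bigsqcup_{j=1}^N\mathrm{graph}(t_j)$, where each $t_j$ is a partial translation with propagation $\sup_{x\in\mathrm{dom}(t_j)}d(x,t_j(x))\le R$. This is the usual decomposition of a symmetric relation of bounded row- and column-degree into finitely many partial bijections, obtainable by a Hall/edge-colouring argument; since $\Delta_R$ is symmetric, the orientation of the graphs is immaterial, and I will write $\mathrm{graph}(t_j)=\{(t_j(x),x):x\in\mathrm{dom}(t_j)\}$. Taking closures in $\beta(X\times X)$ and using that each $\overline{\mathrm{graph}(t_j)}$ is clopen, one gets $\overline{\Delta_R}=\bigsqcup_{j=1}^N\overline{\mathrm{graph}(t_j)}$, and under the canonical embedding $\overline{\Delta_R}\hookrightarrow\beta X\times\beta X$ each $\overline{\mathrm{graph}(t_j)}$ becomes a compact open bisection of $G(X)$.

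Next, since $(\alpha,\omega)\in\overline{\Delta_R}$, disjointness forces $(\alpha,\omega)\in\overline{\mathrm{graph}(t)}$ for exactly one $t:=t_j$. I would then argue that this $t$ is compatible with $\omega$ and satisfies $t(\omega)=\alpha$. Indeed, the source projection (second coordinate) of the bisection $\overline{\mathrm{graph}(t)}$ is $\overline{\mathrm{dom}(t)}$, so $\omega=s(\alpha,\omega)\in\overline{\mathrm{dom}(t)}$ yields $\mathrm{dom}(t)\in\omega$, i.e. compatibility; and the homeomorphism $\theta_{\overline{\mathrm{graph}(t)}}$ induced by the bisection restricts to $t$ on the principal ultrafilters of $\mathrm{dom}(t)$, hence is the continuous extension $\omega\mapsto\lim_\omega t$ of $t$, so that $\alpha=r(\alpha,\omega)=\theta_{\overline{\mathrm{graph}(t)}}(\omega)=\lim_\omega t=t(\omega)$.

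Finally, I would invoke the independence of $d_\omega$ from the chosen compatible family (\cite[Proposition~3.7]{MR3557774}) to compute the distance with a convenient family: take $t_\alpha:=t$ and $t_\omega:=\mathrm{id}_X$, both compatible with $\omega$, with $t(\omega)=\alpha$ and $\mathrm{id}_X(\omega)=\omega$. Then
$$d_\omega(\alpha,\omega)=\lim_{x\to\omega}d(t(x),x)\le R,$$
since $d(t(x),x)\le R$ for every $x\in\mathrm{dom}(t)$ and $\mathrm{dom}(t)\in\omega$, so the ultrafilter limit is bounded by $R$. The main obstacle is the verification in the third paragraph that the translation selected by $(\alpha,\omega)$ genuinely satisfies $t(\omega)=\alpha$, i.e. reconciling the bisection/source–range structure of $G(X)$ with the limit-along-$\omega$ description of $d_\omega$; the decomposition step is standard coarse geometry and the final estimate is immediate once the right $t$ is in hand.
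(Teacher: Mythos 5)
Your proof is correct and follows essentially the same route as the paper: both arguments exploit the independence of $d_\omega$ from the chosen compatible family to take $t_\alpha$ of propagation at most $R$ and $t_\omega$ equal to the identity, and then bound the ultrafilter limit $\lim_{x\rightarrow\omega} d(t_\alpha(x),x)\leq R$. The only difference is that you justify the existence of such a $t_\alpha$ explicitly, via the finite decomposition of $\Delta_R$ into graphs of partial translations and the resulting clopen partition of $\overline{\Delta_R}$ into basic bisections, whereas the paper simply asserts this choice of compatible family from compactness and openness of $\overline{\Delta_R}$.
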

\begin{proof}
	Since $\overline{\Delta_R}$ is compact and open we may choose a compatible family such that $$\sup_{x\in X}d(t_\alpha(x),x)\leq R$$ for all $\alpha\in X(\omega)$ with $(\alpha,\omega)\in \overline{\Delta_R}$ and such that $t_\omega$ is the identity map on a suitable neighbourhood of $\omega$ in $\beta X$. It follows that
	$$d_\omega(\alpha,\omega)= \lim\limits_{x\rightarrow \omega} d(t_\alpha(x),t_\omega(x))=\lim\limits_{x\rightarrow \omega} d(t_\alpha(x),x)\leq R.$$
\end{proof}

In the following proof we will also use a different picture of the coarse groupoid via the pseudogroup of partial translations on $X$. To be more specific, for each partial translation $t:dom(t)\rightarrow ran(t)$ we can consider its extension $\bar{t}:\overline{dom(t)}\rightarrow \overline{ran(t)}$ to the respective closures in $\beta X$. Then $G(X)$ can also be realized as the quotient of $\{(\overline{t},\omega)\mid t \textit{ partial translation}, \omega\in dom(t)\}$ by the equivalence relation $(\bar{t}_1,\omega_1)\sim (\bar{t}_2,\omega_2)$ iff $\omega_1=\omega_2$, and $\bar{t}_1$ and $\bar{t}_2$ coincide on a small neighbourhood of $\omega_1=\omega_2$. We will denote the equivalence class of $(\bar{t},\omega)$ by $[\bar{t},\omega]$. The topology can be described by specifying a basis of compact open bisections as $U_{t}=\{[\bar{t},\omega]\mid \omega\in \overline{dom(t)}\}$. We refer the reader to \cite{zbMATH01761014} for further details.

\begin{proof}[Proof of Theorem~\ref{Thm:AlmostFiniteCoarseGroupoid}]
	Suppose first, that $G(X)$ is almost finite. Given $R>0$ and $\varepsilon>0$ we can find a $(\overline{\Delta_R},\varepsilon)$-invariant elementary subgroupoid $K\subseteq G(X)$. Let $\lbrace x_i\mid i\in I\rbrace$ be a family of representatives for the action of $K$ on $X$ and $X_i:=r(Kx_i)$. Then $X=\bigsqcup X_i$ and since $K$ is compact, we must have $K\subseteq \overline{\Delta_S}$ for some $S\geq 0$, from which it follows that $\sup diam(X_i)\leq S$. Since $K$ is $(\overline{\Delta_R},\varepsilon)$-invariant we get
	$$\abs{\partial_R^+(X_i)}=\abs{\overline{\Delta_R}K{x_i}\setminus K{x_i}}< \varepsilon\abs{K{x_i}}=\varepsilon \abs{X_i},$$ obtaining the desired implication.
	
	For the converse, let $C\subseteq G(X)$ be a compact subset and $\varepsilon>0$. By compactness of $C$ there exists an $R>0$ such that $C\subseteq \overline{\Delta_R}$. An application of condition (2), described in the statement, provides a partition $X=\bigsqcup_{i\in I} X_i$ of $X$, such that $\frac{\abs{\partial_R^+(X_i)}}{\abs{X_i}}<\varepsilon$ for all $i\in I$ and $\sup_{i\in I} diam(X_i)<\infty$. Define an equivalence relation $\mathcal{R}\subseteq X\times X$ by $x\mathcal{R}y$ if and only if there exists an $i\in I$ such that $x,y\in X_i$, i.e. $\mathcal{R}$ is precisely the equivalence relation which has the $X_i$ as its equivalence classes. Since the diameters of the $X_i$ are uniformly bounded, there exists an $S\geq 0$ such that $\mathcal{R}\subseteq \Delta_S$. We let $K$ be the closure of $\mathcal{R}$ in $\beta(X\times X)$. Then $K\subseteq\overline{\Delta_S}\subseteq G(X)$ is a compact open principal subgroupoid of $G(X)$ by construction. It remains to show, that $K$ is $(C,\varepsilon)$-invariant. For this we differentiate two situations:
	\begin{enumerate}
		\item  For $x\in X\subseteq G(X)^{(0)}$ fix $i\in I$ such that $Kx=X_i$. Then
		$$\abs{CKx\setminus Kx}\leq \abs{\Delta_RKx\setminus Kx}=\abs{\partial_R^+(X_i)}<\varepsilon \abs{X_i}=\varepsilon \abs{Kx};$$ hence, the claim follows.

	\item If $\omega\in \beta X\setminus X$ we need some more work.
	Let $(t_\alpha)_{\alpha\in X(\omega)}$ be a compatible family. Using that $K$ is compact and open in $G(X)$, we may (replacing finitely many $t_\alpha$, if necessary) assume that :
	
	\begin{itemize}
		\item for each $\alpha\in X(\omega)$ such that $(\alpha,\omega)\in K$ we have: $U_{t_\alpha}\subseteq K$, where $U_{t_\alpha}=\lbrace [\overline{t_\alpha},\gamma]\mid \gamma\in \overline{dom(t_\alpha)}\rbrace$ is a basic compact open bisection.
		\item $r(U_{t_\alpha})\cap r(U_{t_\beta}) =\emptyset$ whenever $\alpha\neq \beta$ and $(\alpha,\omega),(\beta,\omega)\in K.$
		\item $t_\omega$ is the identity on a neighbourhood of $\omega$.
		\item $s(U_{t_\alpha})=s(U_{t_\beta})$ for all $(\alpha,\omega),(\beta,\omega)\in K.$
	\end{itemize}

	Using that the map $\beta X\rightarrow \N$, given by $\omega\mapsto \abs{K\omega}$ is continuous (apply continuity of the Haar system on $G(X)$ to the characteristic function $1_K$), we may shrink the $U_{t_\alpha}$ further to assume that $\abs{Ky}=\abs{K\omega}$ for all $y\in dom(t_\alpha)$ for all $\alpha$ such that $(\alpha,\omega)\in K$.
	
	Now let $F: X(\omega)\rightarrow G(X)_\omega$ be the bijection from \cite[Lemma~C.3]{MR3557774}. Then apply \cite[Proposition~3.10]{MR3557774} to the finite set $F^{-1}(\overline{\Delta_R}K\omega)\subseteq X(\omega)$ to find a subset $Y\subseteq X$ with $\omega (Y)=1$, and for each $y\in Y$ an isometry $f_y:F^{-1}(\overline{\Delta_R}K\omega)\rightarrow X$ given by $f_y(\alpha)=t_\alpha(y)$. Then we claim that for all $y\in Y$ there exists a (unique) $i\in I$, such that $f_y(F^{-1}(K\omega))=X_i$.
	
	\begin{claimproof}
		
	Given $\alpha,\beta\in X(\omega)$ such that $(\alpha,\omega),(\beta,\omega)\in K$ we have that $[t_\alpha,y]\in U_{t_\alpha}\subseteq K$ and $[t_\beta, y]\in U_{t_\beta}\subseteq K$. Since $K$ is a subgroupoid, it follows that $[t_\beta \circ t_\alpha^{-1},t_\alpha(y)]\in K$. But this means that $(t_\beta(y),t_\alpha(y))\in\mathcal{R}$ and hence $f_y(\alpha)$ and $f_y(\beta)$ are in the same $X_i$.
	
	Conversely, we have $y,\omega\in \overline{dom(t_\alpha)}$ for every $\alpha\in F^{-1}(K\omega)$. Using our choice of the $t_\alpha$ we get $\abs{K\omega}=\abs{Ky}=\abs{X_i}$. Since $f_y$ is an injection defined on a finite set, our claim follows.	
\end{claimproof}

	Using Lemma \ref{Lemma:Metric} and the fact that $f_y$ is an isometry, it is easy to check that $f_y(F^{-1}(\overline{\Delta_R}K\omega\setminus K\omega))\subseteq \partial_R^+(X_i)$.
	Putting everything together we obtain
	\begin{align*}
	\abs{CK\omega\setminus K\omega}&\leq \abs{\overline{\Delta_R}K\omega\setminus K\omega}\\
	&=\abs{f_y(F^{-1}(\overline{\Delta_R}K\omega\setminus K\omega)}\\
	&\leq\abs{\partial_R^+(X_i)}\\
	&<\varepsilon \abs{X_i}=\varepsilon \abs{K\omega}.
	\end{align*}
\end{enumerate}
	This completes the proof of the first statement. For the second one, we first notice that $G(X)|_K=G(K\cap X)$ for every compact open subset $K$ of $\beta X$. In fact, this is the canonical one-to-one correspondence between subsets of $X$ and compact open subsets of $\beta X$. Hence, the second statement follows from the first.
\end{proof}
We have the following immediate consequence, which indicates that admitting tilings of arbitrary invariance is a useful notion from a coarse geometric point of view.
\begin{cor}\label{coarse strong af}
	Admitting tilings of arbitrary invariance is a coarse invariant.
	Moreover, if $f\colon X\rightarrow Y$ is a coarse equivalence between two bounded geometry metric spaces then $G(X)$ is strongly almost finite if and only if $G(Y)$ is strongly almost finite.
\end{cor}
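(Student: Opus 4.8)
The plan is to deduce both assertions from Theorem~\ref{Thm:AlmostFiniteCoarseGroupoid} together with the permanence property recorded in Proposition~\ref{Prop:MoritaInvariance}, using as the single external input the fact that coarsely equivalent bounded geometry metric spaces have Morita equivalent coarse groupoids. Concretely, given a coarse equivalence $f\colon X\to Y$ with coarse inverse $g\colon Y\to X$, I would realise $G(X)$ and $G(Y)$ as restrictions of the coarse groupoid of a common space $Z=X\sqcup Y$, metrised (by a standard construction as in \cite{zbMATH01761014}, e.g.\ declaring $d(x,y)=d_Y(f(x),y)+1$ across the two pieces) so that $X$ and $Y$ are coarsely dense, mutually coarsely equivalent subspaces. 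Using the identity $G(Z)|_{\beta X}=G(X)$ from the proof of Theorem~\ref{Thm:AlmostFiniteCoarseGroupoid}, and the fact that coarse density of $X$ in $Z$ makes $\beta X$ a $G(Z)$-full clopen set (and likewise for $\beta Y$), this exhibits $G(X)$ and $G(Y)$ as Morita equivalent. Since $\beta X$ and $\beta Y$ are compact, and hence $\sigma$-compact, Morita equivalence and stable isomorphism coincide here.

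For the first statement I would then argue directly. As $G(X)$ and $G(Y)$ are stably isomorphic ample groupoids with compact unit spaces, Proposition~\ref{Prop:MoritaInvariance} gives that $G(X)$ is almost finite if and only if $G(Y)$ is. By the equivalence of conditions (1) and (2) in Theorem~\ref{Thm:AlmostFiniteCoarseGroupoid}, $G(X)$ is almost finite precisely when $X$ admits tilings of arbitrary invariance, and symmetrically for $Y$. Chaining these three equivalences shows that $X$ admits tilings of arbitrary invariance if and only if $Y$ does, which is exactly the asserted coarse invariance.

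For the second statement I would invoke the ``in particular'' clause of Theorem~\ref{Thm:AlmostFiniteCoarseGroupoid}: $G(X)$ is strongly almost finite if and only if every subspace of $X$ admits tilings of arbitrary invariance, and symmetrically for $Y$. It therefore suffices to match subspaces up to coarse equivalence, for which I would prove the elementary lemma that (i) for every subspace $X'\subseteq X$ the restriction $f|_{X'}\colon X'\to f(X')$ is again a coarse equivalence, and (ii) every subspace $Y'\subseteq Y$ is coarsely equivalent to the subspace $g(Y')\subseteq X$ (this is just (i) applied to $g$). Granting the lemma: if every subspace of $X$ admits tilings and $Y'\subseteq Y$ is arbitrary, then $Y'$ is coarsely equivalent to a subspace of $X$ by (ii), hence admits tilings by the first statement, so $G(Y)$ is strongly almost finite; the reverse implication is symmetric.

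The step I expect to be the main obstacle is the subspace lemma (i): since $f$ is in general neither injective nor surjective, the naive candidate for an inverse of $f|_{X'}$, namely $g$ restricted to $f(X')$, need not take values in $X'$. I would remedy this by composing with a bounded ``nearest-point'' assignment, choosing for each $y\in f(X')$ a point of $X'$ within the coarse-equivalence constant $C$ of $g(y)$ (such a point exists because $g\circ f$ is within $C$ of the identity), and then checking that the resulting map is controlled and inverts $f|_{X'}$ up to bounded error; bounded geometry of the subspaces is what keeps all the constants uniform. I note that one could instead attempt a self-contained metric proof of the first statement via the pullback tiling $Y_i:=g^{-1}(X_i)$, which is automatically a partition of uniformly bounded diameter; however, verifying the Følner estimate $\abs{\partial_R^+(Y_i)}<\varepsilon\abs{Y_i}$ would require comparing cardinalities and boundaries through $g$ using uniformly bounded fibres and coarse density, and that comparison is where the genuine work of the alternative route would lie.
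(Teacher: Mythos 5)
Your proposal is correct and is essentially the paper's own argument: the paper deduces both statements from Theorem~\ref{Thm:AlmostFiniteCoarseGroupoid} and Proposition~\ref{Prop:MoritaInvariance}, citing \cite[Corollary~3.6]{zbMATH01761014} for the fact that coarsely equivalent bounded geometry spaces have Morita equivalent coarse groupoids, and disposing of the second statement by noting that $A$ and $f(A)$ are coarsely equivalent for every $A\subseteq X$ (your lemma (i)/(ii)). The only place you diverge is in re-deriving that citation via the common space $Z=X\sqcup Y$; this is fine in outline, but your sample metric $d(x,y)=d_Y(f(x),y)+1$ fails the triangle inequality unless $f$ is $1$-Lipschitz, so you should instead take the chain metric it generates (or simply cite \cite[Corollary~3.6]{zbMATH01761014}, as the paper does).
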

\begin{proof}
	The first statement follows from Theorem~\ref{Thm:AlmostFiniteCoarseGroupoid} and Proposition \ref{Prop:MoritaInvariance}, once we note that coarsely equivalent metric spaces have Morita equivalent coarse groupoids (see \cite[Corollary~3.6]{zbMATH01761014}).
	The second statement follows from Theorem~\ref{Thm:AlmostFiniteCoarseGroupoid} and the first statement, since $A$ and $f(A)$ are coarsely equivalent for every $A\subseteq X$.
\end{proof}

Moreover, we can now reap the fruits of the additional work we put in to prove Theorem \ref{Thm:B} for not necessarily second countable groupoids to get the following immediate consequence:
\begin{cor}\label{coarse groupoid+almost unp}
Let $X$ be a bounded geometry metric space such that every subspace of $X$ admits tilings of arbitrary invariance. Then the type semigroup $S(G(X))$ of the associated coarse groupoid is almost unperforated.
\end{cor}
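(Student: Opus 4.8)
The plan is to obtain the corollary as a direct consequence of the two structural results already established, namely Theorem~\ref{Thm:AlmostFiniteCoarseGroupoid} and Theorem~\ref{Thm:B}; there is no genuinely new content to prove, only a matter of chaining the hypotheses correctly. First I would recall that, by \cite[Proposition~3.2]{zbMATH01761014}, the coarse groupoid $G(X)$ is a principal ample locally compact Hausdorff groupoid whose unit space $G(X)^{(0)}=\beta X$ is compact. Consequently every restriction of $G(X)$ to a compact open subset of $\beta X$ again has compact unit space, so the notions of almost finiteness and strong almost finiteness (Definition~\ref{def:StronglyAF}) genuinely apply to $G(X)$.

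The key step is to invoke the second assertion of Theorem~\ref{Thm:AlmostFiniteCoarseGroupoid}. Since by hypothesis every subspace of $X$ admits tilings of arbitrary invariance, that theorem yields that $G(X)$ is strongly almost finite. Concretely, as observed in the proof of Theorem~\ref{Thm:AlmostFiniteCoarseGroupoid}, for a compact open $K\subseteq\beta X$ one has $G(X)|_K=G(K\cap X)$, where $K\cap X$ is a subspace of $X$; applying the first part of the theorem to this subspace gives almost finiteness of each such restriction, which is exactly strong almost finiteness of $G(X)$.

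With strong almost finiteness in hand, I would apply Theorem~\ref{Thm:B} to conclude that $G(X)$ has stable dynamical comparison and, in particular, that $S(G(X))$ is almost unperforated. Here it is essential that Theorem~\ref{Thm:B} does \emph{not} assume second countability of the groupoid, since $\beta X$ is typically not second countable; this is precisely the extra generality built into Theorem~\ref{Thm:B} by means of the measure-extension results of Appendix~\ref{sect:ext-measures}, and it is what makes the present corollary accessible at all.

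As for difficulty, essentially no obstacle remains once the two theorems are available, the substantive work being contained in their proofs. The only point requiring a moment's care is confirming that the stronger hypothesis of Theorem~\ref{Thm:B}, namely strong almost finiteness rather than mere almost finiteness, is the one actually supplied—which is exactly why the corollary is phrased in terms of \emph{every subspace} of $X$ admitting tilings of arbitrary invariance, matching the ``in particular'' clause of Theorem~\ref{Thm:AlmostFiniteCoarseGroupoid}.
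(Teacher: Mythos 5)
Your proposal is correct and follows exactly the route the paper intends: the hypothesis together with the second assertion of Theorem~\ref{Thm:AlmostFiniteCoarseGroupoid} (via the identification $G(X)|_K = G(K\cap X)$) gives strong almost finiteness of $G(X)$, and Theorem~\ref{Thm:B}, which deliberately avoids any second countability assumption, then yields almost unperforation of $S(G(X))$. Your remarks on why compactness of $\beta X$ makes the definitions apply and why the non--second-countable generality of Theorem~\ref{Thm:B} is essential are precisely the points the paper relies on.
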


In the setting of countable discrete groups, we get the following result:
\begin{cor}\label{universal minimal}
	Let $\Gamma$ be a countable discrete group. Let $M\subseteq \beta \Gamma$ be the universal minimal $\Gamma$-space. Then the following are equivalent:
	\begin{enumerate}
		\item $\Gamma$ is amenable.
		\item $G(\abs{\Gamma})=\Gamma\ltimes \beta\Gamma$ is almost finite.
		\item $\Gamma\ltimes M$ is almost finite.
		\item $\Gamma\ltimes (\beta\Gamma\backslash \Gamma)$ is almost finite.
	\end{enumerate}
\end{cor}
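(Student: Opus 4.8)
The plan is to prove the cycle of implications $(1)\Rightarrow(2)\Rightarrow(4)\Rightarrow(3)\Rightarrow(1)$, which visits all four statements and hence establishes their equivalence. Throughout I will use the identification $G(\abs{\Gamma})=\Gamma\ltimes\beta\Gamma$ recorded in the statement (due to \cite{zbMATH01761014}), together with the elementary observation that for a closed invariant subset $Y\subseteq\beta\Gamma$ the restriction $(\Gamma\ltimes\beta\Gamma)_Y$ is canonically the transformation groupoid $\Gamma\ltimes Y$ of the restricted action. Note that $\beta\Gamma$, $M$, and $\beta\Gamma\setminus\Gamma$ are all compact and totally disconnected, so each of the groupoids in question is ample with compact unit space and almost finiteness is meaningful for it. For $(1)\Rightarrow(2)$ I would fix a proper left-invariant metric on $\Gamma$: by Theorem~\ref{Theorem:Tilings-of-amenable-groups} an amenable group decomposes into uniformly bounded translates of finitely many F\o lner sets of any prescribed invariance, which, as explained in the discussion following that theorem, is exactly the statement that $\abs{\Gamma}$ admits tilings of arbitrary invariance. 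Theorem~\ref{Thm:AlmostFiniteCoarseGroupoid} then yields that $G(\abs{\Gamma})=\Gamma\ltimes\beta\Gamma$ is almost finite.

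The implications $(2)\Rightarrow(4)\Rightarrow(3)$ are pure permanence. The set $\Gamma$ of principal ultrafilters is open and invariant in $\beta\Gamma$, so its complement $\beta\Gamma\setminus\Gamma$ is a closed invariant subset; moreover $M$ is disjoint from $\Gamma$ (a minimal closed invariant set cannot contain a principal ultrafilter, whose orbit is the dense, non-closed set $\Gamma$, as this would force $M=\beta\Gamma$), so $M\subseteq\beta\Gamma\setminus\Gamma$ is closed and invariant as well. Applying the fact that almost finiteness passes to restrictions by closed invariant subsets (\cite[Lemma~3.13]{Suzuki}) first to $\beta\Gamma\setminus\Gamma\subseteq\beta\Gamma$ and then to $M\subseteq\beta\Gamma\setminus\Gamma$ gives $(2)\Rightarrow(4)$ and $(4)\Rightarrow(3)$ respectively.

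Finally, for $(3)\Rightarrow(1)$ I would invoke that an almost finite groupoid admits an invariant probability measure on its unit space (\cite[Lemma~3.9]{Suzuki}): almost finiteness of $\Gamma\ltimes M$ produces a $\Gamma$-invariant regular Borel probability measure $\mu$ on $M$. Viewing $M$ as a closed subspace of $\beta\Gamma$, the measure $\mu$ is in particular a $\Gamma$-invariant probability measure on $\beta\Gamma$, i.e.\ a $\Gamma$-invariant state on $C(\beta\Gamma)=\ell^\infty(\Gamma)$. This is precisely an invariant mean on $\Gamma$, so $\Gamma$ is amenable, closing the cycle.

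The genuinely substantive step is $(3)\Rightarrow(1)$, and its crux is the short but essential observation that the invariant measure manufactured by almost finiteness already lives on the subspace $M\subseteq\beta\Gamma$ and therefore restricts to an invariant mean on $\ell^\infty(\Gamma)$; this is what lets the a priori much weaker hypothesis on the minimal piece $M$ force amenability of the whole group. Everything else is bookkeeping built on cited results: the main points still requiring care are the correct identification of restrictions of transformation groupoids with the groupoids of restricted actions and the verification that $M\subseteq\beta\Gamma\setminus\Gamma$, both of which are routine (and the case of finite $\Gamma$ is entirely trivial).
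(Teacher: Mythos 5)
Your proof is correct, and it closes the equivalences by a genuinely different route than the paper. Both arguments obtain $(1)\Rightarrow(2)$ from the Downarowicz--Huczek--Zhang tiling theorem (Theorem~\ref{Theorem:Tilings-of-amenable-groups}) together with Theorem~\ref{Thm:AlmostFiniteCoarseGroupoid}, and both rely on Suzuki's permanence of almost finiteness under restriction to closed invariant subsets. But the paper uses a hub structure: it proves $(2)\Rightarrow(3)$ and $(2)\Rightarrow(4)$ by restricting to $M$ and to $\beta\Gamma\setminus\Gamma$ separately, and then quotes a single result, \cite[Proposition~4.7]{MR3694599}, for both $(3)\Rightarrow(1)$ and $(4)\Rightarrow(1)$. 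You instead run the cycle $(2)\Rightarrow(4)\Rightarrow(3)\Rightarrow(1)$. This costs you the extra inclusion $M\subseteq\beta\Gamma\setminus\Gamma$; it is true for infinite $\Gamma$, but your parenthetical justification is slightly off: non-closedness of the orbit $\Gamma$ does not by itself contradict minimality of $\beta\Gamma$ (minimal systems routinely have dense non-closed orbits); what does the work is that $\Gamma$ is \emph{open}, so that $\beta\Gamma\setminus\Gamma$ is a nonempty proper closed invariant subset whenever $\Gamma$ is infinite. In exchange, your $(3)\Rightarrow(1)$ is self-contained and more elementary than the paper's: you use only Suzuki's Lemma~3.9 (fact (1) at the start of Section~\ref{sec:AF}) to produce an invariant probability measure on $M$, and push it forward along $M\hookrightarrow\beta\Gamma$ to an invariant state on $C(\beta\Gamma)\cong\ell^\infty(\Gamma)$, i.e.\ an invariant mean, so $\Gamma$ is amenable. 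This avoids both the external citation and any topological freeness hypothesis (under which \cite[Proposition~4.7]{MR3694599} is invoked later in the paper, in the discussion preceding Corollary~\ref{cor:almost finite non-amen}). What the paper's route buys in return: it is shorter, and by establishing $(3)\Rightarrow(1)$ and $(4)\Rightarrow(1)$ independently of each other it sidesteps the inclusion $M\subseteq\beta\Gamma\setminus\Gamma$ and hence the finite-group caveat that your cycle must (and does) acknowledge.
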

\begin{proof}
	Suppose first that $\Gamma$ is amenable.
	 Applying the main result of \cite{DHZ}, we obtain an exact tiling of $\Gamma$ whose tiles are $(K,\varepsilon)$-invariant, i.e. we obtain a number $n\in \N$, finite subsets $S_1,\ldots,S_n\subseteq \Gamma$ and subsets $F_1,\ldots, F_n$ of $\Gamma$, such that $$\Gamma=\bigsqcup\limits_{i=1}^n\bigsqcup\limits_{\gamma\in S_i} \gamma F_i.$$
	This verifies the condition in Theorem \ref{Thm:AlmostFiniteCoarseGroupoid}, so $G(\abs{\Gamma})$ is indeed almost finite.
	If $G(\abs{\Gamma})$ is almost finite, then so is its restriction to the closed $\Gamma$-invariant subset $M\subseteq \beta\Gamma$ (see \cite[Lemma~3.13.]{Suzuki}). But $G(\abs{\Gamma})|_{M}\cong \Gamma\ltimes M$. Similarly, $(2)\Rightarrow (4)$.
	
	The implications $(3)\Rightarrow (1)$ and $(4)\Rightarrow (1)$ now follow from \cite[Proposition~4.7]{MR3694599}.
\end{proof}
\begin{exam}
Let $\Gamma$ be a countable discrete amenable group and $M\subseteq \beta \Gamma$ be the universal minimal $\Gamma$-space. Then $S(\Gamma\ltimes M)\cong H_0(\Gamma\ltimes M)^+$ is cancellative and almost unperforated by Corollary~\ref{universal minimal}, Corollary~\ref{cancellative} and Theorem~\ref{Thm:Almost finite implies almost unperf}.
\end{exam}
Let us now use the above characterization to treat another class of bounded geometry metric spaces that has attracted a lot of attention in geometric group theory, namely the so-called \textit{box spaces} associated to any countable discrete residually finite group.
Let us recall the relevant definitions: Suppose $\Gamma$ is a countable discrete residually finite group and $\sigma=(N_i)_{i\in\N}$ is a decreasing sequence of finite index normal subgroups of $\Gamma$ whose intersection $\bigcap_{i\in\N} N_i$ is trivial. Equip $\Gamma$ with a proper right-invariant metric $d$. For each $i\in\N$ let $\pi_i:\Gamma\rightarrow \Gamma/N_i$ be the canonical quotient map, 
and equip $\Gamma/N_i$ with the quotient metric. Then the box space $\Box_\sigma \Gamma$ is defined as the coarse disjoint union $\bigsqcup_{i}\Gamma/N_i$  (see e.g. \cite[Definition 6.3.2]{MR2562146}). In this setting, the following is our main result. 

\begin{prop}\label{prop: Box Spaces}
	Let $\Gamma$ be a countable discrete residually finite group with any nested decreasing sequence $\sigma=(N_i)_{i\in\N}$ of finite index normal subgroups of $\Gamma$. Then the following are equivalent:
	\begin{enumerate}
		\item $\Gamma$ is amenable;
		\item $\Box_\sigma\Gamma$ admits tilings of arbitrary invariance;
		\item $G(\Box_\sigma\Gamma)$ is almost finite.
		
	\end{enumerate}
	\end{prop}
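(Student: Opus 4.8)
The plan is to handle the three conditions by first invoking Theorem \ref{Thm:AlmostFiniteCoarseGroupoid}, which gives $(2)\Leftrightarrow(3)$ for free since $\Box_\sigma\Gamma$ is a bounded geometry metric space; the whole content therefore lies in relating amenability of $\Gamma$ to the tiling property of $\Box_\sigma\Gamma$, that is, in proving $(1)\Leftrightarrow(2)$. Throughout I would exploit two structural features of the box space: that it is a \emph{coarse} disjoint union, so that for every $D>0$ all but finitely many components $\Gamma/N_i$ are $D$-separated from the rest of the space; and that $\bigcap_i N_i=\{e\}$, so that the injectivity radius of $\pi_i\colon\Gamma\to\Gamma/N_i$ tends to infinity, which makes $\pi_i$ a local isometry on large balls and identifies the outer boundary $\partial_R^+$ of a small subset of $\Gamma/N_i$ with the corresponding boundary computed in $\Gamma$.

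For $(2)\Rightarrow(1)$ I would argue by lifting a single tile. Fix $R>0$ and $\varepsilon>0$ and take a tiling $\Box_\sigma\Gamma=\bigsqcup_k X_k$ with each $X_k$ an $(R,\varepsilon)$-Følner set of diameter at most $D$. Choose $i$ large enough that $\Gamma/N_i$ is $D$-isolated from the other components and $\pi_i$ is injective on the $(D+R)$-neighbourhood of any point. Any tile $X_k$ meeting $\Gamma/N_i$ is then contained in $\Gamma/N_i$, its outer $R$-boundary computed in $\Box_\sigma\Gamma$ agrees with the one computed in $\Gamma/N_i$, and since $\operatorname{diam}(X_k)\le D$ the map $\pi_i$ is injective on the $R$-neighbourhood of a lift of $X_k$. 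Hence $X_k$ lifts to a set $\widetilde{X}_k\subseteq\Gamma$ with $|\partial_R^+(\widetilde{X}_k)|=|\partial_R^+(X_k)|<\varepsilon|\widetilde{X}_k|$, exhibiting an $(R,\varepsilon)$-Følner set in $\Gamma$. As $R,\varepsilon$ were arbitrary, $\Gamma$ is amenable.

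For $(1)\Rightarrow(2)$ I would transport the tilings produced by Theorem \ref{Theorem:Tilings-of-amenable-groups} down to the components. Given $R,\varepsilon$, the tiling theorem of \cite{DHZ} supplies an exact tiling of $\Gamma$ by translates of finitely many Følner shapes, all contained in a ball $B_{D_0}(e)$. One first bundles the finitely many components that are not yet $D_0$-isolated, and on which $\pi_i$ is not yet injective at the relevant scale, into a single tile; this is legitimate because its outer $R$-boundary in $\Box_\sigma\Gamma$ is empty by coarse disjointness and its diameter is finite. On each remaining component $\Gamma/N_i$, where $\pi_i$ is a local isometry at the scale of the shapes, one transports the tiling to obtain a partition into isometric copies of the shapes; by the local-isometry identification of boundaries these copies remain $(R,\varepsilon)$-Følner and have uniformly bounded diameter.

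The main obstacle is precisely this descent in $(1)\Rightarrow(2)$: the quotients $\Gamma/N_i$ are in general vastly larger than any injectivity ball of $\pi_i$, their cardinality typically growing much faster than the injectivity radius, so one cannot simply transplant a single ball's worth of the tiling of $\Gamma$. One must instead produce a genuine partition of the \emph{entire} finite group $\Gamma/N_i$ into bounded Følner pieces, matching them up consistently across the global topology of $\Gamma/N_i$ and absorbing the inevitable remainder regions into neighbouring tiles without spoiling the Følner estimate, exactly as one enlarges the last interval when tiling $\Z/m\Z$ by intervals of a fixed length. Verifying that the exact tiling furnished by \cite{DHZ} can be realised uniformly and consistently on all sufficiently large finite quotients is the technical heart of the argument; once this is in place, the remaining bookkeeping is routine.
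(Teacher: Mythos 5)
Your reorganisation into $(2)\Leftrightarrow(3)$ (via Theorem~\ref{Thm:AlmostFiniteCoarseGroupoid}) plus $(1)\Leftrightarrow(2)$ is fine, and your argument for $(2)\Rightarrow(1)$ -- lifting a single tile from a far-away, almost-isometrically-covered component $\Gamma/N_i$ back to $\Gamma$ -- is correct and in fact more elementary than the paper's route, which instead proves $(3)\Rightarrow(1)$ by restricting $G(\Box_\sigma\Gamma)$ to the closed invariant set $\beta X\setminus X$, identifying that restriction with $(\beta X\setminus X)\rtimes\Gamma$ via \cite{MR3197659}, and quoting \cite[Proposition~4.7]{MR3694599}. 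The problem is $(1)\Rightarrow(2)$: the step you yourself flag as ``the technical heart'' is exactly the step you do not carry out, and it is a genuine gap rather than routine bookkeeping. The tiling of Theorem~\ref{Theorem:Tilings-of-amenable-groups} produces shapes $S_1,\dots,S_n$ and center sets $F_1,\dots,F_n$ that bear \emph{no relation whatsoever} to the chain $\sigma=(N_i)_i$. Such a tiling of $\Gamma$ does not descend to the finite quotients in any controlled way: distinct tiles of $\Gamma$ can project to overlapping subsets of $\Gamma/N_i$, a single tile can project non-injectively, and the failure is spread globally over $\Gamma/N_i$ rather than concentrated in a small ``last interval'' that could be absorbed into a neighbour as in $\Z/m\Z$. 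So the analogy you appeal to does not supply an argument, and with the \cite{DHZ} input alone there is nothing to absorb remainders \emph{into}.

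The missing idea, which is how the paper closes this gap, is to use a tiling of $\Gamma$ that is compatible with the chain $\sigma$ from the outset. By Weiss's monotileability theorem \cite{MR1819193} (in the form of \cite[Proposition~5.5]{MR2322178}), for amenable residually finite $\Gamma$ and any $R,\varepsilon$ one can find $i_0\in\N$ and a finite $(R,\varepsilon)$-F\o lner set $T$ such that $\Gamma=\bigsqcup_{\gamma\in N_{i_0}}T\gamma$; that is, the tiling centers are precisely the subgroup $N_{i_0}$ \emph{from the given chain}. Because the centers form a finite-index normal subgroup belonging to $\sigma$, the tiling descends \emph{exactly} to every quotient with $i\geq i_0$: choosing coset representatives $C_i$ for $N_{i_0}/N_i$ one gets the genuine partition $\Gamma/N_i=\bigsqcup_{c\in C_i}\pi_i(Tc)$ with no remainder at all, and the isolation and isometry-radius estimates from \cite{szabo_wu_zacharias_2017} (which you set up correctly) show that $\pi_i$ restricts to an isometric bijection $\partial_R^+(Tc)\to\partial_R^+(\pi_i(Tc))$, so each $\pi_i(Tc)$ is still $(R,\varepsilon)$-F\o lner of diameter $\mathrm{diam}(T)$; the finitely many non-isolated components are bundled into one tile $X_0$ with empty $R$-boundary, exactly as you propose. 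In short: replace the general \cite{DHZ} tiling by Weiss's monotile with centers $N_{i_0}$ and your outline becomes a proof; without that replacement the descent to the finite quotients is unproven, and it is not obtainable by the remainder-absorption scheme you sketch.
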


\begin{proof}
{\rm ((1)$\Rightarrow$(2))}	Fix an arbitrary radius $R>0$, a tolerance $\varepsilon>0$ and a nested decreasing sequence $\sigma=(N_i)_{i\in\N}$.
	By a classical result of Weiss \cite{MR1819193} (see also \cite[Proposition~5.5]{MR2322178} for the version we are using), we can find a (large) number $i_0\in\N$ and a finite subset $T\subseteq\Gamma$ such that 
	\begin{enumerate}[(i)]
		\item $\Gamma=\bigsqcup_{\gamma\in N_{i_0}}T\gamma$ (i.e. $T$ is a monotile and $N_{i_0}$ is the set of tiling centers), and
		\item $\abs{\partial_R^+(T)}<\varepsilon \abs{T}$.
	\end{enumerate} 
	
	Moreover, by the definition of a box space and \cite[Lemmas~3.7,~3.11]{szabo_wu_zacharias_2017}, we may choose $i_1\geq i_0$ such that 
	\begin{enumerate}[(i)]\setcounter{enumi}{2}
		\item $d(\Gamma/N_{i},\Gamma/N_{j})>R$ for all $i\geq i_1$ and $j< i_1$, and
		\item for every $i\geq i_1$ the quotient map $\pi_i:\Gamma\rightarrow \Gamma/N_i$ has large isometry radii, in the sense that each $\pi_i$ is isometric on $B_{R+L}(\gamma)$ for all $\gamma\in N_{i_0}$, where $L:=\max\lbrace d(t,e)\mid t\in T\rbrace$ and $e$ the identity.
	\end{enumerate}
	Now for each $i\geq i_1$, let $C_i$ be a complete family of representatives for the quotient $N_{i_0}/N_i$.
	Set $X_0:=\bigsqcup_{i< i_1} \Gamma/N_i$. Then we have a decomposition
	\begin{equation}\label{Equation:Decomposition}
	\Box_\sigma\Gamma=X_0\sqcup \bigsqcup_{i\geq i_1}\bigsqcup_{c\in C_i}\pi_i(Tc).
	\end{equation}
	
	Note that the latter union is indeed disjoint by our choice of $C_i$ and property ${\rm (i)}$ above.
	
	We claim that for every $i\geq i_1$ and every $c\in C_i$, the quotient map $\pi_i$ restricts to an isometric bijection $\partial_R^+(Tc)\rightarrow \partial_R^+(\pi_i(Tc))$. Indeed, using right-invariance of the metric on $\Gamma$, we have $\partial_R^+(Tc)\subseteq B_{R+L}(c)$. It follows from item ${\rm (iv)}$ that $\pi_i$ is isometric on $B_{R+L}(c)$; hence, one has that $\pi_i(\partial_R^+(Tc))\subseteq \partial_R^+(\pi_i(Tc))$. To see the converse inclusion, let $xN_i\in \partial_R^+(\pi_i(Tc))$ and observe that $\partial_R^+(\pi_i(Tc))\subseteq \Gamma/N_i$ by item ${\rm (iii)}$. Also, let $t\in T$ such that $d(xN_i,tcN_i)=d(xN_i,\pi_i(Tc))$. Then we have
	$$R\geq d(xN_i,tcN_i)=\inf_{m,n\in N_i} d(xn,tcm)=\inf_{m,n\in N_i} d(xnm^{-1},tc).$$
	Therefore, there exists a $y\in\Gamma$ such that $xN_i=yN_i$ and $d(y,Tc)\leq R$. Clearly, we have $y\notin Tc$ and hence $y\in \partial_R^+(Tc)$ such that $\pi_i(y)=xN_i$. 
	
	Combining the above, that the metric on $\Gamma$ is right-invariant and item ${\rm (ii)}$, we obtain
	$$\abs{\partial_R^+(\pi_i(Tc))}= \abs{\partial_R^+(Tc)}=\abs{\partial_R^+(T)}<\varepsilon\abs{T}=\varepsilon \abs{\pi_i(Tc)}.$$
 Notice that by ${\rm (iii)}$ we also have $\partial_R^+(X_0)=\emptyset$, so every set in the decomposition (\ref{Equation:Decomposition}) is $(R,\varepsilon)$-invariant, as desired.
	
	Finally, combining item ${\rm  (iv)}$ and the fact that the metric on $\Gamma$ is right-invariant, we deduce $diam(\pi_i(Tc))=diam(Tc)=diam(T)$. So $$S:=\max\lbrace diam(T),diam(X_0)\rbrace$$ is a uniform bound on the diameters of the sets appearing in the decomposition (\ref{Equation:Decomposition}).

 {\rm ((2)$\Rightarrow$(3))} Provided by Theorem~\ref{Thm:AlmostFiniteCoarseGroupoid}. 
 
 	{\rm  ((3)$\Rightarrow$(1))} For ease of notation, we set $X:=\Box_{\sigma}\Gamma$. Then $G(X)\vert_{\beta X\backslash X}=({\beta X\backslash X})\rtimes \Gamma$ by \cite[Proposition~2.50 and Example~2.6]{MR3197659}. Since $\beta X\backslash X$ is a closed $G(X)$-invariant subset of $\beta X$, it follows that $({\beta X\backslash X})\rtimes \Gamma$ is almost finite as well. The claim now follows from \cite[Proposition~4.7]{MR3694599}.
\end{proof}
We conclude from the above Proposition that admitting tilings of arbitrary invariance is indeed a much stronger property than amenability if one considers metric spaces beyond groups: any box spaces are always (supr)amenable for rather trivial reasons. However, there exist many examples of finitely generated residually finite groups which are not amenable (e.g. the free groups $\mathbb{F}_n$ or $SL_n(\Z)$ for $n\geq 2$).

We will now proceed to present a construction that starting from any bounded geometry metric space $X$ produces another bounded geometry metric space $Y$ containing $X$, such that $Y$ admits tilings of arbitrary invariance. This will be very useful later in order to exhibit our examples.

\begin{prop}\label{Prop:BuildingAlmostFiniteGroupoids}
	Let $X$ be a discrete metric space with bounded geometry. Then the metric space $Y:=X\times \N$ with the graph metric (i.e. $d_Y((x,n),(y,m))=n+m+d_X(x,y)$ whenever $x\neq y$ and $d_Y((x,n),(x,m))=\abs{n-m}$) has bounded geometry and admits tilings of arbitrary invariance. 
	
	As a consequence, admitting tilings of arbitrary invariance does not imply Yu's property A.
\end{prop}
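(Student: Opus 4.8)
The plan is to treat the three assertions in turn: bounded geometry and the non-implication are short, while the existence of tilings is the core.

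\emph{Bounded geometry.} I would fix $(x,n)\in Y$ and a radius $R\geq 0$ and bound $\abs{B_R((x,n))}$ directly from the metric. Points of the same ray $\{x\}\times\N$ lying in the ball contribute at most $2R+1$. A point $(y,m)$ with $y\neq x$ lies in the ball only if $n+m+d_X(x,y)\leq R$, which forces $d_X(x,y)\leq R$ and $m\leq R$; since $X$ has bounded geometry there are at most $M_R:=\sup_{z}\abs{B_R^X(z)}<\infty$ admissible $y$, each with at most $R+1$ admissible $m$. Hence $\abs{B_R((x,n))}\leq (2R+1)+M_R(R+1)$, uniformly in $(x,n)$, so $Y$ has bounded geometry.

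\emph{Tilings of arbitrary invariance.} The key structural observation is that the rays are \emph{thin}: any path from $(x,n)$ to a point on a different ray must pass through level $0$, and therefore costs at least $n$; in particular if $n>R$ then every point within distance $R$ of $(x,n)$ lies on the same ray $\{x\}\times\N$. So I would fix $R,\varepsilon$, choose a large length $L\in\N$, and tile each ray into consecutive blocks $T_{x,k}:=\{x\}\times[kL,(k+1)L)$ for $k\geq 0$. Each block has exactly $L$ points and $diam(T_{x,k})=L-1$, giving the required uniform diameter bound. For the blocks with $k\geq 1$, as soon as $L>R$ the estimate $\min_{n\in[kL,(k+1)L)}(n+m+d_X(x,y))\geq kL>R$ shows there are no cross-ray boundary points, so the only boundary of $T_{x,k}$ lies on the same ray and consists of at most $2R$ points; thus $\abs{\partial_R^+(T_{x,k})}/\abs{T_{x,k}}\leq 2R/L$.

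\emph{The main obstacle} is the bottom block $T_{x,0}$, which meets level $0$ and so feels the ambient geometry of $X$. Its same-ray boundary contributes only the $R$ points just above it, while its cross-ray boundary consists of those $(y,m)$ with $y\neq x$ and $m+d_X(x,y)\leq R$; here bounded geometry of $X$ re-enters and bounds this contribution by $M_R(R+1)$. Hence $\abs{\partial_R^+(T_{x,0})}\leq R+M_R(R+1)$, uniformly in $x$. Choosing $L>\max\{R,\,2R/\varepsilon,\,(R+M_R(R+1))/\varepsilon\}$ makes every block $(R,\varepsilon)$-invariant, so the family $\{T_{x,k}\}$ is a tiling of $Y$ witnessing arbitrary invariance.

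\emph{The consequence.} Finally I would take $X$ to be any fixed bounded geometry metric space failing Yu's property A (such spaces exist, the textbook example being a coarse disjoint union of expanders) and form $Y=X\times\N$ as above. The level-$0$ slice $X\times\{0\}$ is an isometric copy of $X$ inside $Y$ by the very definition of the metric, and since property A passes to subspaces, $Y$ having property A would force $X$ to have it as well. As $Y$ nevertheless admits tilings of arbitrary invariance by the previous paragraphs, this exhibits a space with tilings of arbitrary invariance but without property A, proving the stated non-implication.
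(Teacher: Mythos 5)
Your proof is correct and follows essentially the same route as the paper's: tiling each ray $\{x\}\times\N$ into consecutive blocks of a fixed large length, noting that only the bottom block sees the geometry of $X$, and bounding its boundary via bounded geometry (your explicit bound $R+M_R(R+1)$ plays the role of the paper's $S+R$ with $S=\sup_x\abs{B_R(x,0)}$). Your explicit treatment of the final claim (embedding $X\times\{0\}$ isometrically and citing that property A passes to subspaces, with expanders as the example) is exactly what the paper leaves implicit.
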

\begin{proof}
	We start by showing that $Y$ has bounded geometry. To this end let $B_R(x,k)\subseteq Y$ denote the ball of radius $R$ around $(x,k)\in Y$. Now given $R\geq 0$, we have $C:=\sup_{x\in X} \abs{B_R(x)}<\infty$ since $X$ has bounded geometry, and then $\abs{B_R(x,k)}\leq 2\lceil R\rceil \abs{B_R(x)}\leq 2\lceil R \rceil C$.
	Next we will prove that $Y$ admits tilings of arbitrary invariance. Let $R>0$ and $1\geq\varepsilon>0$ be given. Since increasing the radius $R$ only makes to problem harder, we may replace $R$ by $\lceil R \rceil $ and thus assume without loss of generality that $R\in\N$. Using that $Y$ has bounded geometry, there exists an $S\geq 0$ such that $\sup_{x\in X} \abs{B_R(x,0)}\leq S$. Now let $N>\frac{max\lbrace S, R\rbrace+R}{\varepsilon}$. For $x\in X$ and $k\in\N$ write $Y_{x,k}:= \lbrace x\rbrace\times \lbrace kN,\ldots ,((k+1)N)-1\rbrace$. Then we obtain a partition $$Y=\bigsqcup_{x\in X}\bigsqcup_{k=0}^\infty Y_{x,k}.$$
	The cardinality of each $Y_{x,k}$ is precisely $N$ and its diameter is $N-1$ independent of $x$ and $k$. It remains to show that the outer boundary of each of the sets in this partition is small relative to its cardinality. If $k\neq 0$ then $\abs{\partial_R^+(Y_{x,k})}\leq 2R<\varepsilon N=\varepsilon \abs{Y_{x,k}}$.
	If $k=0$, then we have $\partial_R^+(Y_{x,0})\subseteq B_R(x,0)\cup \left(\lbrace x\rbrace \times \lbrace N,\ldots,N+ R-1\rbrace\right)$. It follows that $\abs{\partial_R^+(Y_{x,0})}\leq S+ R  <\varepsilon N=\varepsilon \abs{Y_{x,0}}$.
	
The last statement follows from the fact that Yu's property A passes to subspaces (see \cite[Proposition~4.2]{MR1871980}).	
\end{proof}

Note that the above shows in particular that the property of admitting tilings of arbitrary invariance suffers the same shortcoming as amenability: It does not pass to arbitrary subspaces. Combined with Theorem~\ref{Thm:AlmostFiniteCoarseGroupoid} and using the identification $G(X\times\N)|_{ \bar{X}^{\beta(X\times \N)}}\cong G(X)$, our constructions show that almost finiteness for groupoids does not pass to restrictions to arbitrary compact open subsets.

Moreover, we can use it to produce a lot of examples which show that admitting tilings of arbitrary invariance is independent from other notions frequently studied in coarse geometry.
\begin{exam}\label{Examples for metric spaces}
	\begin{enumerate}
		\item Let $X$ be a bounded geometry metric space without Yu's property A. Then $Y=X\times \N$ defined as in Proposition~\ref{Prop:BuildingAlmostFiniteGroupoids} contains $X$ as a subspace by the construction. Hence, $Y$ admits tilings of arbitrary invariance and cannot have Yu's property A. Conversely, the free group on two generators $\mathbb{F}_2$ has Yu's property A, but can not admit tilings of arbitrary invariance, since it is non-amenable. Recall that $\mathbb{F}_2$ has asymptotic dimension one, so even finite asymptotic dimension does not imply tilings of arbitrary invariance.
		\item Let $X$ be a bounded geometry metric space which does not coarsely embed into a Hilbert space. Then $Y=X\times \N$ does not coarsely embed into a Hilbert space as well, but $Y$ admits tilings of arbitrary invariance.
	\end{enumerate}	
\end{exam}

The above examples are also very interesting when combined with Theorem~\ref{Thm:AlmostFiniteCoarseGroupoid}.
Most examples of almost finite groupoids known so far are amenable. In fact, for a transformation groupoid $\Gamma\ltimes X$ associated to a topologically free action of a discrete group $\Gamma$ acting on a totally disconnected compact space $X$, almost finiteness implies amenability of the acting group and a posteriori amenability of $\Gamma\ltimes X$ by \cite[Proposition~4.7]{MR3694599}. 
Our results yield new examples of almost finite groupoids which lack other desirable properties like amenability or a-T-menability. In particular, this shows that almost finiteness for general ample groupoids behaves very differently from the transformation groupoid case.

\begin{cor}\label{cor:almost finite non-amen}
	There exist almost finite ample principal groupoids $G$ which lack at least one of the following properties:
	\begin{enumerate}
		\item amenability,
		\item a-T-menability,
	\end{enumerate}
	Moreover, there exist ample groupoids with finite dynamic asymptotic dimension (see \cite{GWY17} for the definition) which are not almost finite.
\end{cor}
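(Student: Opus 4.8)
The plan is to read off the three assertions from the coarse-geometric examples of Example~\ref{Examples for metric spaces}, using the standard dictionary between coarse properties of a bounded geometry metric space $X$ and groupoid properties of its coarse groupoid $G(X)$. Note first that, by \cite[Proposition~3.2]{zbMATH01761014}, every coarse groupoid $G(X)$ is principal and ample, so all the groupoids produced below are automatically principal, as required.

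For the first item I would take a bounded geometry metric space $X$ without Yu's property A and pass to $Y=X\times\N$ as in Proposition~\ref{Prop:BuildingAlmostFiniteGroupoids}. Since $Y$ admits tilings of arbitrary invariance, Theorem~\ref{Thm:AlmostFiniteCoarseGroupoid} gives that $G(Y)$ is almost finite. On the other hand, $X$ sits inside $Y$ as a subspace and property~A passes to subspaces \cite[Proposition~4.2]{MR1871980}, so $Y$ fails property~A; hence $G(Y)$ is \emph{not} amenable by \cite[Theorem~5.3]{zbMATH01761014}. For the second item I would run the identical argument starting instead from a bounded geometry space $X$ that does not coarsely embed into a Hilbert space: once again $Y=X\times\N$ produces an almost finite principal groupoid $G(Y)$, while coarse embeddability into Hilbert space clearly passes to the subspace $X\subseteq Y$, so $Y$ is not coarsely embeddable either, and the corresponding equivalence for the coarse groupoid (see \cite{zbMATH01761014}) shows that $G(Y)$ is not a-T-menable.

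For the \emph{moreover} statement the idea is to exhibit a coarse groupoid whose dynamic asymptotic dimension is finite for trivial reasons but which cannot be almost finite because of non-amenability. I would take $\mathbb{F}_2$ equipped with a word metric and consider its coarse groupoid $G(\mathbb{F}_2)=\mathbb{F}_2\ltimes\beta\mathbb{F}_2$. As recorded in Example~\ref{Examples for metric spaces}, $\mathbb{F}_2$ has asymptotic dimension one while, being non-amenable, it does not admit tilings of arbitrary invariance. Since the dynamic asymptotic dimension of $G(X)$ agrees with the asymptotic dimension of $X$ \cite{GWY17}, the groupoid $G(\mathbb{F}_2)$ has finite dynamic asymptotic dimension; and since $\mathbb{F}_2$ admits no tilings of arbitrary invariance, $G(\mathbb{F}_2)$ is not almost finite by Theorem~\ref{Thm:AlmostFiniteCoarseGroupoid}.

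The bulk of this argument is bookkeeping: principality and ampleness of the coarse groupoids, together with the fact that the relevant coarse invariants are inherited by the subspace $X\subseteq Y$, are immediate. The only substantial ingredients are the three imported correspondences between coarse invariants of $X$ and groupoid invariants of $G(X)$ — property~A with amenability, coarse Hilbert embeddability with a-T-menability, and asymptotic dimension with dynamic asymptotic dimension — so the main point requiring care is to invoke these with their correct references rather than to perform any new computation.
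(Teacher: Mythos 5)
Your proposal is correct and follows essentially the same route as the paper: both take the coarse groupoids of the spaces from Example~\ref{Examples for metric spaces} (namely $G(X\times\N)$ for $X$ lacking property~A, respectively coarse embeddability, and $G(\mathbb{F}_2)$ for the final claim), and combine Theorem~\ref{Thm:AlmostFiniteCoarseGroupoid} with the same three correspondences from \cite{zbMATH01761014} and \cite{GWY17}. The only difference is expository: you spell out the subspace-inheritance arguments and principality explicitly, which the paper delegates to Example~\ref{Examples for metric spaces} and the earlier discussion of coarse groupoids.
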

\begin{proof} To obtain the desired examples just take the coarse groupoid for the metric spaces described in Example~\ref{Examples for metric spaces} and combine them with the following facts:
\begin{enumerate}
    \item $G(Y)$ is amenable if and only if $Y$ has property A \cite[Theorem~5.3]{zbMATH01761014};
    \item $G(Y)$ is a-T-menable if and only if $Y$ coarsely embeds into a Hilbert space \cite[Theorem~5.4]{zbMATH01761014}; and
\end{enumerate}
For the final statement, we consider $\beta\F_2\rtimes \F_2=G(\F_2)$. From \cite[Theorem~6.4]{GWY17} we know that the dynamic asymptotic dimension of $G(\F_2)$ equals one, but as seen above $G(\F_2)$ is not almost finite.
\end{proof}

We should mention that Gabor Elek has independently found examples of non-amenable almost finite groupoids using a different approach (see \cite{1812.07511} for further details).

Finally, prompted by the results in section \ref{sec:AF}, we want to give some examples of strongly almost finite groupoids.

\begin{exam}\label{Example:Strongly almost finite}
	The coarse groupoids $G(\mathbb{Z})$ and $G(\mathbb{N})$ are strongly almost finite. In particular, their type semigroups are almost unperforated by Theorem \ref{Thm:B}.
	Let us focus on the case of the integers $\N$ (the result for $\Z$ follows the same line of argument by doing everything in two "directions").
	In view of Theorem \ref{Thm:AlmostFiniteCoarseGroupoid} it is enough to show that every subspace $A\subseteq \N$ admits tilings of arbitrary invariance.
	If $A\subseteq \N$ is bounded, it is finite and hence there is nothing to do. So let us assume that $A$ is unbounded.
	Write $A=\lbrace a_n\mid n\in \N\rbrace$ as an increasing sequence.
	Then there are two options: If $\sup_{n\in\N}\abs{a_n-a_{n+1}}<\infty$, then $A$ is coarsely equivalent to $\N$ itself and hence admits tilings of arbitrary invariance.
	We can deal with the remaining case $\sup_{n\in\N}\abs{a_n-a_{n+1}}=\infty$ by hand: Let $R>0$ and $\varepsilon>0$ be given. Let $N> \frac{2R}{\varepsilon}$.
	First, since the above supremum is infinite, we can find a subsequence $(a_{n_m})_m$ in $A$ such that $\abs{a_{n_m +1}-a_{n_m}}>R$ for all $m\in \N$ and $\abs{a_{n+1}-a_n}\leq R$ for all $n\not\in \lbrace n_m\mid m\in \N\rbrace$.
	Now let $A_1=\lbrace a_1,\ldots, a_{n_1}\rbrace$ and for $m>1$ we let $A_m:=\lbrace a_{n_{m-1}+1},\ldots, a_{n_m}\rbrace$.
	These sets form a disjoint partition of $A$ into $(R,\varepsilon)$-F\o lner sets such that $diam(A_m)\leq R\abs{A_m}$. So if $\sup_m \abs{A_m}<\infty$ we are done. This need not be the case however, so assuming that the sequence $(\abs{A_m})_m$ is unbounded, we need to refine our partition further.
	Now pick the subsequence consisting of all $A_{m_k}$ such that $\abs{A_{m_k}}\geq N\geq \frac{2R}{\varepsilon}$. We may assume that the maximal element of $A_{m_k}$ is strictly smaller than the smallest element of $A_{m_{k+1}}$ for all $k\in \N$.
	Now, writing $A_{m_k}$ as an increasing sequence we can easily partition each $A_{m_k}$ as $A_{m_k}=\bigsqcup_{l=1}^{L_{m_k}} B_{m_k,l}$, where $B_{m_k,1}$ consists of the first $N$ elements of $A_{m_k}$,  $B_{m_k,2}$ of the next $N$ elements and so on, such that $N= \abs{B_{m_k,l}}$ for all $1\leq l< L_{m_k}$ and $N\leq \abs{B_{m_k,L_{m_k}}}\leq 2N$.
	Then, we clearly have $$\frac{\abs{\partial_R(B_{m_k,l})}}{\abs{B_{m_k,l}}}\leq \frac{2R}{N}<\varepsilon.$$
	Thus, $$A=\bigsqcup_{m : \abs{A_m}\leq N} A_m\sqcup \bigsqcup_{k=1}^\infty \bigsqcup_{l=1}^{L_{m_k}}B_{m_k,l}$$ is a partition of $A$ into $(R,\varepsilon)$-F\o lner sets of diameter at most $2RN$.

\end{exam}

We finish this subsection by extending the above example to some groups with asymptotic dimension one. Recall the following definition:

\begin{defi}
	Let $X$ be a metric space. We say that the \emph{asymptotic dimension} of $X$ does not exceed $n$ and write asdim$(X)\leq n$ provided for every $R>0$ there exist $R$-disjoint families $\mathcal{U}^0,\ldots, \mathcal{U}^n$ of uniformly bounded subsets of $X$ such that $\cup_i \mathcal{U}^i$ is a cover of $X$.
\end{defi}

It is trivial to see that every bounded geometry metric space $X$ with asdim$(X)=0$ admits tilings of arbitrary invariance. Hence, its coarse groupoid $G(X)$ is strongly almost finite by Theorem~\ref{Thm:AlmostFiniteCoarseGroupoid}.

\begin{prop}
	Let $\Gamma$ be a finitely presented amenable group with asdim$(\Gamma)=1$. Then $G(\Gamma)$ is strongly almost finite.
\end{prop}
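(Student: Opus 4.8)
The plan is to reduce the statement to the already-settled case of $\mathbb{Z}$: I would show that under the stated hypotheses $\Gamma$ must be coarsely equivalent to $\mathbb{Z}$, and then invoke the coarse invariance of strong almost finiteness recorded in Corollary~\ref{coarse strong af} together with Example~\ref{Example:Strongly almost finite}, where $G(\mathbb{Z})$ was shown to be strongly almost finite by treating its subspaces by hand. Note that one cannot argue subspace-by-subspace using amenability of $\Gamma$ directly, since admitting tilings of arbitrary invariance does not pass to subspaces (Proposition~\ref{Prop:BuildingAlmostFiniteGroupoids}); the coarse-invariance route circumvents this difficulty by handling all subspaces at once through a single coarse equivalence.

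First I would pin down the quasi-isometry type of $\Gamma$. By a theorem of Gentimis, a finitely presented group of asymptotic dimension one is virtually free. Since $\Gamma$ is amenable and amenability passes to subgroups, a finite-index free subgroup $F\leq\Gamma$ is itself amenable; a free group is amenable only if it has rank at most one, so $F$ is trivial or infinite cyclic. The trivial case would force $\Gamma$ to be finite and hence $\mathrm{asdim}(\Gamma)=0$, contradicting the hypothesis. Thus $F\cong\mathbb{Z}$, and $\Gamma$ is virtually $\mathbb{Z}$.

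Next I would observe that a virtually $\mathbb{Z}$ group, equipped with any proper left-invariant metric (e.g.\ a word metric), is coarsely equivalent to $\mathbb{Z}$: the inclusion of the finite-index copy of $\mathbb{Z}$ is a coarse equivalence, and the coarse type is independent of the chosen metric up to coarse equivalence. Combining this with Example~\ref{Example:Strongly almost finite}, which gives that $G(\mathbb{Z})$ is strongly almost finite, and with Corollary~\ref{coarse strong af}, which asserts that strong almost finiteness of the coarse groupoid is preserved under coarse equivalence of bounded geometry metric spaces, yields that $G(\Gamma)$ is strongly almost finite, as desired.

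The only nontrivial external input is Gentimis' classification; once that is available, everything reduces to elementary facts about amenable (virtually) free groups together with the two results of this section. Accordingly, the hard part is really just recognizing that finite presentability, asymptotic dimension one, and amenability together force the quasi-isometry type to be that of $\mathbb{Z}$, after which the conclusion is immediate.
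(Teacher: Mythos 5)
Your proof is correct and follows essentially the same route as the paper: Gentimis' theorem gives virtually free, amenability upgrades this to virtually cyclic (hence coarsely equivalent to $\mathbb{Z}$), and then Corollary~\ref{coarse strong af} together with Example~\ref{Example:Strongly almost finite} finishes the argument. In fact you spell out a step the paper compresses, namely that Gentimis only gives virtually free and that amenability plus $\mathrm{asdim}(\Gamma)=1$ is what forces the finite-index free subgroup to be $\mathbb{Z}$.
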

\begin{proof}
	By \cite[Theorem~2]{MR2431020} $\Gamma$ must be virtually cyclic. In particular, $\Gamma$ and $\Z$ are coarsely equivalent. From Corollary~\ref{coarse strong af} we only have to show that $G(\Z)$ is strongly almost finite, which is done in Example \ref{Example:Strongly almost finite}.
\end{proof}
\subsection{Non-amenable spaces}

Non-amenable metric spaces are well-studied in terms of their connections with properly infinite Roe-algebras \cite{ALLW1,ALLW2}. Using the type semigroup of the coarse groupoid and the dichotomy between amenability and paradoxicality for discrete metric spaces, we will in this section recover a celebrated Theorem by Block and Weinberger by a rather easy and conceptual proof.

\begin{prop}\label{zero homology non-amen}
	Let $X$ be a bounded geometry metric space and $G(X)$ be the coarse groupoid of $X$. Then the following are equivalent:
	\begin{enumerate}
		\item $X$ is non-amenable;
		\item every order unit in $S(G(X))$ is properly infinite;
		\item $H_0(G(X))=0$.
	\end{enumerate}
\end{prop}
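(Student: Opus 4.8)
The plan is to hang everything on the order unit $u:=[1_{\beta X}]$ of $S(G(X))$ and on the amenable/paradoxical dichotomy for the coarse structure of $X$, packaging the three statements into a cycle $(1)\Rightarrow(2)\Rightarrow(3)\Rightarrow(1)$. Two preliminary observations drive the argument. First, since the compact open subsets of $\beta X$ are precisely the sets $\ol{E}$ for $E\subseteq X$, the cone $F(S(G(X)))$ of unnormalized states is identified with the finitely additive, coarsely invariant measures on $X$; in particular $X$ is amenable if and only if there is a finitely additive invariant probability measure, i.e.\ an $f\in F(S(G(X)))$ with $f(u)=1$, and such an $f$ is automatically finite on all of $S(G(X))$ because $x\le n_x u$ for every $x$. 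Second --- and this is the technical engine that lets us bypass the (unavailable) cancellation or conicality of $H_0$ --- the Banach--Schröder--Bernstein theorem for equidecomposability shows that the algebraic preorder on $S(G(X))$ is antisymmetric: $x\le y$ and $y\le x$ force $x=y$ (apply the classical statement to the bisection picture, passing through $G^m$ when $x,y$ are not single indicators, using $S(G^m)=S(G)$).

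For $(1)\Rightarrow(2)$ I would take an order unit $v=[f]$; by Lemma~\ref{Lemma:Order units} its support $\ol{E}$ is $G(X)$-full, which for the coarse groupoid means $E$ is coarsely dense in $X$. Then the inclusion $E\hookrightarrow X$ is a coarse equivalence, so $E$ is non-amenable, and the dichotomy between amenability and paradoxicality produces a paradoxical decomposition of $E$ by bounded-displacement partial translations. Their closures are compact open bisections of $G(X)$ realizing $2[1_{\ol{E}}]\le[1_{\ol{E}}]$; antisymmetry upgrades this to $2[1_{\ol{E}}]=[1_{\ol{E}}]$, so $[1_{\ol{E}}]$ is properly infinite and absorbs its own multiples. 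Since $[1_{\ol{E}}]\le v\le m[1_{\ol{E}}]=[1_{\ol{E}}]$ with $m=\max f$, antisymmetry gives $v=[1_{\ol{E}}]$, which is properly infinite.

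The implication $(2)\Rightarrow(3)$ is then purely algebraic. Proper infiniteness of $u$ gives $2u\le u$, hence $2u=u$ by antisymmetry, and therefore $nu=u$ for all $n$. For arbitrary $x\in S(G(X))$ we have $x\le nu=u$, so $x+u\le u+u=u$ and trivially $u\le x+u$; antisymmetry yields $x+u=u$. Thus every generator maps to $0$ in the cancellative hull $C(S(G(X)))$, so $C(S(G(X)))=0$, and by Proposition~\ref{Prop:Cancellative hull of type semigroup} (using that $\beta X$ is compact) $H_0(G(X))^+\cong C(S(G(X)))=0$, whence $H_0(G(X))=0$. For $(3)\Rightarrow(1)$ I would argue by contraposition: if $X$ is amenable, a Følner-type finitely additive invariant probability measure $\mu$ (finite because $X$ has bounded geometry) induces a group homomorphism $\mu_*\colon H_0(G(X))\to\mathbb{R}$, $[g]\mapsto\int g\,d\mu$, which is well defined because invariance forces $\mu$ to annihilate $\operatorname{im}(\partial_1)$ (as $\mu(s(V))=\mu(r(V))$ for bisections $V$); since $\mu_*([1_{\beta X}])=\mu(X)=1\neq 0$, the group $H_0(G(X))$ cannot vanish.

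The main obstacle is the step $(1)\Rightarrow(2)$, and specifically the geometric input that the support of an \emph{arbitrary} order unit is a non-amenable subspace: one must establish that $G(X)$-fullness of $\ol{E}$ is equivalent to coarse density of $E$, and then feed the coarse amenable/paradoxical dichotomy into the bisection picture of $G(X)$. A purely monoid-theoretic shortcut is not available, since $2u\le u$ together with $u\le nv$ does not by itself force $2v\le v$ for an order unit $v$ (one can only extract $(n{+}1)v=nv$, which is strictly weaker). A secondary point to handle with care is that $G(X)$ is not second countable, so Lemma~\ref{lem:RS-forgroupoids} does not apply and one must work throughout with finitely additive invariant measures (the cone $F(S(G(X)))$) rather than Borel measures; Banach--Schröder--Bernstein is precisely what compensates for the resulting absence of cancellation.
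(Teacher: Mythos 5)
Your proof is correct, and it runs along the same cycle $(1)\Rightarrow(2)\Rightarrow(3)\Rightarrow(1)$ with the same geometric core as the paper's argument: the coarse amenable/paradoxical dichotomy, the identification of $G(X)$-fullness of $\ol{E}$ with coboundedness of $E$ in $X$, and coarse invariance of (non-)amenability. The differences lie in the algebraic packaging, and they are genuine. Where you install Banach--Schr\"oder--Bernstein antisymmetry of the algebraic preorder on $S(G(X))$ as a global engine, the paper achieves the same effect locally, by noting that properly infinite elements of $S(G(X))$ satisfy the \emph{exact} equality $2[1_{\ol{A}}]=[1_{\ol{A}}]$ (via exact paradoxical decompositions); these are two faces of the same phenomenon, but your formulation is more systematic, and one sentence should be added to make it legitimate: antisymmetry \emph{fails} for type semigroups of general ample groupoids, and what saves it here is that compact open subsets of $\beta X$ are exactly closures of arbitrary subsets of $X$ and compact open bisections of $G(X)$ are exactly closures of arbitrary partial translations, so the infinite unions produced by the set-theoretic Schr\"oder--Bernstein construction never leave the category of allowed pieces. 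For $(2)\Rightarrow(3)$ the paper cancels in the group $H_0(G(X))$ for cobounded sets and then invokes the argument of \cite[Lemma~5.4]{LW18} to treat arbitrary $A\subseteq X$; your absorption identity $x+u=u$ for every $x$, fed through Proposition~\ref{Prop:Cancellative hull of type semigroup}, kills all classes uniformly and is a tidy simplification. For $(3)\Rightarrow(1)$ the paper extends the F\o lner mean to a Borel probability measure in $M(G(X))$, whereas you integrate against the finitely additive invariant measure directly to produce a nonzero functional on $H_0(G(X))$; this avoids the measure-extension step altogether and correctly sidesteps the fact that $G(X)$ is not second countable, so Lemma~\ref{lem:RS-forgroupoids} is unavailable. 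Each route buys something: the paper's stays closer to the cited literature, yours is more self-contained once the antisymmetry claim is properly grounded.
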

\begin{proof}
	$(1)\Rightarrow (2)$ Since a non-amenable space admits a paradoxical decomposition (in the sense of \cite[Definition~2.5]{ALLW1}), the element $[1_{\beta X}]$ is properly infinite in $S(G(X))$ by \cite[Corollary~4.9]{boenicke_li_2018}. Now if $K\subseteq \beta X$ is $G(X)$-full, then $K\cap X$ is cobounded in $X$ and hence coarsely equivalent to $X$ itself. Since paradoxicality is a coarse invariant, $K\cap X$ is paradoxical itself and hence $[1_K]$ is properly infinite in $S(G(K\cap X))=S(G(X)|_K)\cong S(G(X))$.
	
	$(2)\Rightarrow (3)$: First of all $[1_{\bar{A}}]_0=0$ in $H_0(G(X))$ for all cobounded $A\subseteq X$, since a properly infinite element in $S(G(X))$ actually satisfies $2[1_A]=[1_A]$ and so we can just cancel in $H_0(G(X))$.
	Now if $A\subseteq X$ is arbitrary, follow the arguments in the proof of \cite[Lemma~5.4]{LW18} to get $[1_{\bar A}]_0=0$. The claim follows.
	
	$(3)\Rightarrow (1)$: Let $H_0(G(X))=0$ and suppose $X$ is amenable. Then there exists a $G(X)$-invariant Borel probability measure $\mu\in M(G(X))$. Denoting by $\hat\mu$ the corresponding functional on $H_0(G(X))$, it follows that $0=\hat\mu(0)=\hat{\mu}([1_{\beta X}])=\mu(\beta X)=1$, which is a contradiction.
\end{proof}

The remaining step is the identification of the $0$-th groupoid homology group of the coarse groupoid with the $0$-th uniformly finite homology group of $X$.
Recall that $H_n^{uf}(X)$ is obtained from the chain complex $(C_n^{uf}(X,\Z),\partial^{uf}_n)_n$, where $C_n^{uf}(X,\Z)$ consists of formal linear combinations $c=\sum_n c_{\bar{x}}\bar{x}$, where $\bar{x}$ denotes an $(n+1)$-tuple $(x_0,\ldots,x_n)\in X^{n+1}$, $c_{\bar{x}}\in \Z$ such that
\begin{enumerate}
	\item $c$ has finite propagation, in the sense that there exists a constant $P_c>0$ such that $c_{\bar{x}}=0$, provided that $\max d(x_i,x_j)\geq P_c$,
	\item and $c$ is bounded, meaning that  $\sup_{\bar{x}\in X^{n+1}}\abs{c_{\bar{x}}}<\infty$.
\end{enumerate}
The boundary map $\partial^{uf}_n:C_n^{uf}(X,\Z)\rightarrow C_{n-1}^{uf}(X,\Z)$ is defined on simplices by
$\partial^{uf}_n(x_0,\ldots,x_n)=\sum_{i=0}^n(-1)^i(x_0,\ldots,\hat{x_i},\ldots,x_n),$
where hat denotes omission of the term. One extends $\partial^{uf}_n$ to the whole of $C_n^{uf}(X,\Z)$ by linearity.
\begin{lem}\label{unniformly finite homology}
	There is a canonical isomorphism $H_0^{\textbf{uf}}(X,\Z)\cong H_0(G(X))$.
\end{lem}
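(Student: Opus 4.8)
The plan is to prove the statement by matching the two chain complexes in their lowest two degrees, since each of $H_0(G(X))$ and $H_0^{uf}(X,\Z)$ is the cokernel of a single boundary map. Recall that $H_0(G(X)) = C(\beta X,\Z)/\mathrm{im}(\partial_1)$ with $\partial_1 = s_* - r_* \colon C_c(G(X),\Z) \to C(\beta X,\Z)$, while by definition $H_0^{uf}(X,\Z) = C_0^{uf}(X,\Z)/\mathrm{im}(\partial_1^{uf})$ with $\partial_1^{uf}\colon C_1^{uf}(X,\Z)\to C_0^{uf}(X,\Z)$. First I would set up canonical identifications of the relevant chain groups. Since $\Z$ is discrete, every $f\in C(\beta X,\Z)$ is locally constant and bounded, hence determined by its restriction to the dense subset $X$; this gives a canonical isomorphism $C(\beta X,\Z)\cong \ell^\infty(X,\Z)=C_0^{uf}(X,\Z)$. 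Writing $G(X)=\bigcup_{R\ge 0}\overline{\Delta_R}$ as an increasing union of compact open sets and using that $\overline{\Delta_R}$ is the Stone--\v{C}ech compactification of the discrete set $\Delta_R=\{(x,y): d(x,y)\le R\}$, a compactly supported continuous $\Z$-valued function on $G(X)$ is precisely a bounded $\Z$-valued function on $X\times X$ of finite propagation; hence $C_c(G(X),\Z)\cong C_1^{uf}(X,\Z)$ canonically.

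It then remains to verify that, under these identifications, the groupoid boundary $\partial_1=s_*-r_*$ agrees with $\partial_1^{uf}$. Given $h\in C_c(G(X),\Z)$ corresponding to a family $c=(c_{(x,y)})$, the functions $s_*(h),r_*(h)\in C(\beta X,\Z)$ are continuous and therefore determined by their restrictions to $X$. The key observation is that over a principal point $y\in X$ the source fibre consists only of principal elements: if $(\alpha,y)\in G(X)$ then $\alpha=t(y)$ for some compatible partial translation $t$, and since $t$ maps $X$ into $X$ we get $\alpha\in X$, so $s^{-1}(y)\cap G(X)=\{(x,y): d(x,y)\le R\}$ for $h$ supported in $\overline{\Delta_R}$. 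Consequently $s_*(h)(y)=\sum_x c_{(x,y)}$ and, symmetrically, $r_*(h)(x)=\sum_y c_{(x,y)}$, both finite sums by bounded geometry. These are exactly the coefficient functions of $\partial_1^{uf}(c)$, whose value at $z$ is $\sum_x c_{(x,z)}-\sum_y c_{(z,y)}$; thus $(s_*-r_*)(h)$ and $\partial_1^{uf}(c)$ represent the same element of $\ell^\infty(X,\Z)$. Should the conventions for source and range be interchanged, one only flips an overall sign, which leaves the image unchanged.

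With the boundary maps matched, the subgroups $\mathrm{im}(\partial_1)$ and $\mathrm{im}(\partial_1^{uf})$ of $\ell^\infty(X,\Z)$ coincide, and passing to quotients yields the canonical isomorphism $H_0^{uf}(X,\Z)\cong H_0(G(X))$.

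I expect the main obstacle to be the verification in the second paragraph: the pushforwards $s_*,r_*$ are a priori defined only after decomposing the compact open set $\overline{\Delta_R}$ into finitely many compact open bisections $V$ and invoking the identities $s_*(1_V)=1_{s(V)}$ and $r_*(1_V)=1_{r(V)}$ used earlier in the paper, so one must argue that on the dense set $X$ these agree with the naive fibrewise sums and that continuity pins down the extension. The computation of the source and range fibres over principal points, via the partial translation picture recalled before Lemma~\ref{Lemma:Metric}, is the crucial combinatorial point that makes the two boundary formulas line up.
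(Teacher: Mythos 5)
Your proposal is correct and follows essentially the same route as the paper: identify $C_0^{uf}(X,\Z)\cong C(\beta X,\Z)$ and $C_1^{uf}(X,\Z)\cong C_c(G(X),\Z)$ via Stone--\v{C}ech extension/restriction, then match $\partial_1^{uf}$ with $s_*-r_*$ by checking on the dense set $X$ and invoking continuity. Your explicit computation that the source and range fibres over points of $X$ consist only of principal elements (finite sets, by bounded geometry) is exactly the substance behind the paper's brief remark that the identity ``clearly holds when checking only for elements in $X\subseteq\beta X$.''
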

\begin{proof}
	Using the universal property of the Stone-\v{C}ech compactification it is easy to see that there is a canonical linear bijection
	$$\Phi_0:C_0^{uf}(X,\Z)\rightarrow C(\beta X,\Z)$$
	given by extension of functions.
	Indeed, every element $c\in C_0^{uf}(X,\Z)$ can be viewed as a bounded (continuous) function $c:X\rightarrow \Z$ and can hence be extended to a bounded continuous function $\beta X\rightarrow \Z$. Restriction of functions clearly gives an inverse to $\Phi$.
	To complete the proof we need to check that the respective boundary maps are compatible.
	Similarly to the above observation, we can view a chain $c\in C_1^{uf}(X,\Z)$ as a bounded function $c:X\times X\rightarrow \Z$. Since $c$ has finite propagation, it is supported on $\Delta_{P_c}$. Again, we can extend $c$ continuously to a (compactly supported) function on $\overline{\Delta_{P_c}}\subseteq G(X)$, thus obtaining a well-defined linear map $\Phi_1:C_1^{uf}(X,\Z)\rightarrow C_c(G(X),\Z)$.
	Conversely, every function $f\in C_c(G(X),\Z)$ is bounded and its support is contained in $\overline{\Delta_R}$ for some $R>0$. Hence, restricting it to  to $\Delta_R$ (and extending by zero on $X\times X\setminus \Delta_R$) gives rise to a chain in $C_1^{uf}(X,\Z)$. One easily verifies that these constructions are inverse to each other.
	For $c\in C_1^{uf}(X,\Z)$ let $c_i=\sum_{\bar x} c_{\bar x} x_i$, $i=0,1$. Then we compute
	\begin{align*}
	    \Phi_0(\partial^{uf}_n(c)) & = \Phi_0(c_1)- \Phi_0(c_0)\\
	    & = s_*(\Phi_1(c))-r_*(\Phi_1(c))=\partial_1(\Phi_1(c)),
	\end{align*}
	where the second equation clearly holds when checking only for elements in $X\subseteq \beta X$ and hence by continuity on the whole of $\beta X$.
	Thus we have verified $\Phi(im(\partial_1^{uf}))\subseteq im(\partial_1)$, and a similar computation using the inverses of the $\Phi_i$ shows equality.
\end{proof}
The following corollary was first proved by Block and Weinberger in \cite[Theorem~3.1]{BW92}.
\begin{cor}\label{Cor:BlockWeinberger}
Let $X$ be a bounded geometry metric space. Then $X$ is non-amenable if and only if $H_0^{uf}(X,\Z)=0$.
\end{cor}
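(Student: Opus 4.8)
The plan is to deduce this corollary immediately by chaining together the two results that precede it, namely Proposition~\ref{zero homology non-amen} and Lemma~\ref{unniformly finite homology}. The content of the corollary is entirely contained in the equivalence $(1)\Leftrightarrow(3)$ of Proposition~\ref{zero homology non-amen}, once one transports the vanishing of groupoid homology across the isomorphism provided by Lemma~\ref{unniformly finite homology}; so there is no new analytic or combinatorial difficulty to overcome here, and the main work has already been done in establishing those two statements.

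Concretely, I would first invoke Lemma~\ref{unniformly finite homology} to obtain the canonical isomorphism $H_0^{uf}(X,\Z)\cong H_0(G(X))$. In particular this is an isomorphism of abelian groups, so one group is trivial precisely when the other is; that is, $H_0^{uf}(X,\Z)=0$ if and only if $H_0(G(X))=0$. I would then appeal to the equivalence $(1)\Leftrightarrow(3)$ in Proposition~\ref{zero homology non-amen}, which asserts that $H_0(G(X))=0$ if and only if $X$ is non-amenable. Splicing these two biconditionals together yields exactly the desired statement: $X$ is non-amenable if and only if $H_0^{uf}(X,\Z)=0$.

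Since the argument is a direct combination of cited results, the only point requiring any care is bookkeeping rather than mathematics: one should make sure the isomorphism in Lemma~\ref{unniformly finite homology} is genuinely natural enough that it respects the trivial object (which it is, being induced by the explicit chain-level maps $\Phi_0,\Phi_1$ that identify the boundary images), so that ``$H_0(G(X))=0$'' and ``$H_0^{uf}(X,\Z)=0$'' really are the same condition and not merely abstractly isomorphic nonzero groups. With that observation in place the proof is complete, and I would expect to write it in just two or three sentences. The historical remark that this recovers \cite[Theorem~3.1]{BW92} is worth retaining, as it situates the result, but it plays no role in the proof itself.
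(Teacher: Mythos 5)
Your proposal is correct and is exactly the paper's own proof: the authors likewise derive the corollary directly by combining Proposition~\ref{zero homology non-amen} (the equivalence of non-amenability with $H_0(G(X))=0$) with the isomorphism $H_0^{uf}(X,\Z)\cong H_0(G(X))$ from Lemma~\ref{unniformly finite homology}. Your extra remark about the isomorphism respecting triviality is harmless but unnecessary, since any group isomorphism identifies trivial groups.
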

\begin{proof}
It follows directly from Proposition~\ref{zero homology non-amen} and Lemma~\ref{unniformly finite homology}.
\end{proof}

\appendix
\section{Extending measures}
\label{sect:ext-measures}

For a compact open subset $U$ of $G^{(0)}$, we would like to extend measures in $M(G|_U)$ to invariant (possibly unbounded) measures defined on the whole of $G^{(0)}$. 

Let $\mu \in M(G|_U)$.  We begin by defining a function $\rho$ on compact open subsets of $G^{(0)}$. First consider a compact open subset $K$ of the $G$-invariant open subset $Y:=r(GU)\subseteq G^{(0)}$ generated by $U$. Then we
can write $K= \bigsqcup_{i=1}^n r(W_i)$ for compact open bisections $W_1,\dots , W_n$ such that
$s(W_i)\subseteq U$. We then set
$$\rho (K)= \sum _{i=1}^n \mu (s(W_i)).$$
By the proof of Lemma \ref{lem:Smorita-inv}, we see that $\rho (K)$ does not depend on the particular decomposition as above.
It follows that $\rho (K_1\cup K_2) = \rho (K_1) + \rho (K_2) $ if $K_1,K_2$ are compact open subsets of $Y$ such that $K_1\cap K_2= \emptyset$.

If $K$ is a compact open subset of $G^{(0)}$ such that $K\nsubseteq Y$, then we set $\rho (K) = \infty$. The additivity formula above obviously holds also for 
any two compact open subsets $K_1$ and $K_2$ of $G^{(0)}$. 

The second step is to define $\rho$ for all open subsets of $Y$. Note that if $V$ is an open subset of $U$ then 
$$\mu (V) = \text{sup} \,\, \mu (K) ,$$
where the supremum is taken over all the compact open subsets of $V$. This follows from inner regularity of $\mu$ and the fact that $U$ is totally disconnected. 

Thus, for an open subset $V$ of $G^{(0)}$ it is natural to define
$$\rho (V) := \text{sup}\,\, \rho (K) ,$$
where $K$ ranges over all the compact open subsets of $V$. We then have that $\rho (V) = \mu (V)$ for each open subset $V$ of $U$.
It is trivial that $\rho (V)\le \rho (W)$ for $V$ and $W$ open subsets of $G^{(0)}$ such that $V\subseteq W$.
Note also that $\rho (V)=\infty $ for each open subset of $G^{(0)}$ such that $V\nsubseteq Y$. 

We now define an {\it outer measure} $\mu^*$ on $\mathcal P (G^{(0)})$ by 
$$\mu^* (A) = \text{inf} \Big\{  \sum_{j=1}^{\infty} \rho (V_j) : A\subseteq \bigcup_{j=1}^{\infty}  V_j,\,\,  V_j \text{ open subsets of } G^{(0)} \Big\}$$
(see \cite[Proposition 1.10]{Foll84}).

\begin{lem}
 \label{lem:equalitymurho}
 For any open subset $V$ of $Y$ we have $\mu^*(V) = \rho (V)$. In particular, we have $\mu^*(V) = \rho (V)=\mu (V)$ for any open subset $V$ of $U$. 
\end{lem}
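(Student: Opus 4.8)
The plan is to prove the two inequalities $\mu^*(V)\le \rho(V)$ and $\rho(V)\le \mu^*(V)$ separately, the second being the substantive one.

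For the easy direction I would simply test the definition of $\mu^*(V)$ against the one-term open cover $V\subseteq V$, padded with copies of $\emptyset$ (for which $\rho(\emptyset)=0$, since the empty decomposition gives $0$). As this is an admissible countable cover of $V$ by open subsets of $G^{(0)}$, the infimum defining $\mu^*(V)$ is at most $\rho(V)$, so $\mu^*(V)\le \rho(V)$ is immediate.

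For the reverse inequality, recall that $\rho(V)=\sup\{\rho(K): K\subseteq V \text{ compact open}\}$, so it suffices to show $\rho(K)\le \sum_{j=1}^\infty \rho(V_j)$ for every compact open $K\subseteq V$ and every open cover $V\subseteq\bigcup_j V_j$. Fix such a $K$ and such a cover. By compactness of $K$ there are finitely many indices $j_1,\dots,j_m$ with $K\subseteq V_{j_1}\cup\cdots\cup V_{j_m}$. Since $G^{(0)}$ is totally disconnected, $K$ is a compact, totally disconnected, Hausdorff space, and the relatively open cover $\{K\cap V_{j_l}\}_{l=1}^m$ can be refined to a finite partition $K=\bigsqcup_{l=1}^m K_l$ into compact open subsets with $K_l\subseteq V_{j_l}$ (first cover $K$ by basic compact open sets each contained in some $K\cap V_{j_l}$, extract a finite subcover by compactness, then disjointify). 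Using the finite additivity of $\rho$ on compact open subsets established just before the lemma, together with the monotonicity $\rho(K_l)\le \rho(V_{j_l})$ (valid because $K_l$ is one of the compact open subsets of $V_{j_l}$ over which the supremum defining $\rho(V_{j_l})$ is taken), I obtain
\[
\rho(K)=\sum_{l=1}^m \rho(K_l)\le \sum_{l=1}^m \rho(V_{j_l})\le \sum_{j=1}^\infty \rho(V_j).
\]
Taking the supremum over all compact open $K\subseteq V$ gives $\rho(V)\le \sum_j \rho(V_j)$, and then the infimum over all admissible covers yields $\rho(V)\le \mu^*(V)$. All of this is carried out in $[0,\infty]$, so no finiteness assumption on the $\rho(V_j)$ is needed.

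Combining the two inequalities proves $\mu^*(V)=\rho(V)$ for every open $V\subseteq Y$. The \emph{in particular} statement is then immediate: any open $V\subseteq U$ is in particular an open subset of $Y$, so $\mu^*(V)=\rho(V)$, while $\rho(V)=\mu(V)$ was already recorded in the discussion preceding the lemma. The only real subtlety, and the step I expect to require the most care, is the clopen partition subordinate to the finite subcover: this is precisely where total disconnectedness of $G^{(0)}$ enters, and where the finite additivity of $\rho$ gets converted into the countable subadditivity estimate that drives the argument.
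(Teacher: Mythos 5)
Your proof is correct and follows essentially the same route as the paper's: the easy inequality is immediate from the definition of $\mu^*$, and for the reverse one you reduce to a compact open $K\subseteq V$, extract a finite subcover, use total disconnectedness to produce a disjoint compact open refinement subordinate to the cover, and conclude via finite additivity and monotonicity of $\rho$. No gaps; the partition step you flag as the key subtlety is exactly the step the paper carries out.
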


\begin{proof}
 The inequality $\mu^*(V) \le \rho (V)$ is obvious. For the other inequality, take any compact open set $K$ such that $K\subseteq V$, and let $\{ V_j \}$ be a sequence of open subsets such that
 $V\subseteq \bigcup_{j=1}^{\infty} V_j$. Note that we have $K\subseteq \bigcup_{j=1}^N V_j$ for some $N\ge 1$. For each $x\in K$ we can select a small compact open neighborhood $W'_x$ of $x$ such that 
 $W'_x\subseteq V_l\cap K$ for some $1\le l\le N$. Since $K$ is compact 
 we can then write $K =\bigcup _{i=1}^M W'_{x_i}$ for some $x_i\in K$, $i=1,\dots , M$. Refining this decomposition, we can assume that the sets $W'_i$ are mutually disjoint.
 Collecting terms, we can write 
 $$K= \bigsqcup_{i=1}^N W_i,$$
 where $W_i$ are compact open subsets of $K$ such that  $W_i\subseteq V_i$ for $i=1,\dots , N$. We now have
 $$\rho (K) = \sum_{i=1}^N \rho (W_i) \le \sum_{i=1}^N \rho (V_i) \le \sum_{i=1}^{\infty} \rho (V_j).$$
 This shows that $\rho (V) \le \sum_{i=1}^{\infty} \rho (V_j)$. It follows that $\rho (V) \le \mu^*(V)$, as desired. 
\end{proof}

\begin{lem}
 \label{lem:KsubsetofV}
 Let $K$ be a compact open subset of an open subset $V$ of $Y$. Then $\rho(V) = \rho (K) + \rho (V\setminus K)$.  
\end{lem}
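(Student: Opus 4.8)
The plan is to establish the two inequalities $\rho(V)\ge \rho(K)+\rho(V\setminus K)$ and $\rho(V)\le \rho(K)+\rho(V\setminus K)$ separately, using only the definition of $\rho$ on open subsets of $Y$ as a supremum over compact open subsets, together with the finite additivity of $\rho$ on the ring of compact open subsets (and the monotonicity that follows from it). I would first record that $V\setminus K$ is again an open subset of $Y$: indeed $K$ is compact, hence closed in the Hausdorff space $G^{(0)}$, so $V\setminus K$ is open and $\rho(V\setminus K)$ is well defined.

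For the inequality $\rho(V)\ge \rho(K)+\rho(V\setminus K)$, I would take an arbitrary compact open $L\subseteq V\setminus K$. Then $K\sqcup L$ is a compact open subset of $V$ with $K\cap L=\emptyset$, so finite additivity of $\rho$ on compact open sets gives $\rho(K\sqcup L)=\rho(K)+\rho(L)$, and since $K\sqcup L\subseteq V$ we get $\rho(K)+\rho(L)\le \rho(V)$. Taking the supremum over all such $L$ and using that $\rho(V\setminus K)=\sup_L\rho(L)$ yields $\rho(K)+\rho(V\setminus K)\le \rho(V)$.

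For the reverse inequality, I would take an arbitrary compact open $L\subseteq V$. Since $G^{(0)}$ is totally disconnected and $K$ is compact open, the sets $L\cap K$ and $L\setminus K$ are both compact open, and $L=(L\cap K)\sqcup(L\setminus K)$; the key small point is that $L\setminus K$ is simultaneously closed in the compact set $L$ (because $K$ is open) and open (because $K$ is closed), hence compact open. Finite additivity then gives $\rho(L)=\rho(L\cap K)+\rho(L\setminus K)$. Since $L\cap K\subseteq K$, monotonicity (a consequence of additivity and positivity of $\rho$) gives $\rho(L\cap K)\le \rho(K)$, and since $L\setminus K$ is a compact open subset of the open set $V\setminus K$ we have $\rho(L\setminus K)\le \rho(V\setminus K)$ by definition. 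Thus $\rho(L)\le \rho(K)+\rho(V\setminus K)$, and taking the supremum over $L\subseteq V$ gives $\rho(V)\le \rho(K)+\rho(V\setminus K)$.

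Combining the two inequalities yields the claimed equality. I do not expect a genuine obstacle here: the argument is routine bookkeeping with the ring of compact open sets, and the only place where care is needed is confirming that the relevant differences and intersections remain compact open, which is exactly where the ample (totally disconnected) structure of $G^{(0)}$ is used.
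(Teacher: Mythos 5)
Your proof is correct and follows essentially the same route as the paper: the paper's three-line chain of equalities (restricting the supremum defining $\rho(V)$ to compact open sets containing $K$, then splitting off $\rho(K)$ by finite additivity) is exactly your two inequalities written in compressed form. Your version merely makes explicit the bookkeeping the paper leaves implicit, including the useful observation that $L\cap K$ and $L\setminus K$ remain compact open because $K$ is clopen.
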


\begin{proof}
 Note that 
\begin{align*}
\rho (V)  & = \text{sup}\, \{ \rho (K'): K \subseteq K'\subseteq V, K' \text{ compact open }\}\\
& = \rho (K) +   \text{sup}\,  \{ \rho (K''): K''\subseteq V\setminus K,  K'' \text{ compact open }\}\\
& = \rho (K) +\rho (V\setminus K).  
 \end{align*}
\end{proof}
Recall that a subset $A$ of $G^{(0)}$ is called $\mu^*$-\textit{measurable} if
$$\mu^*(E) = \mu^*(E\cap A) + \mu^*(E\cap A^c)\qquad \text{ for all } E\subseteq G^{(0)} .$$
By Caratheodory's Theorem \cite[Theorem 1.11]{Foll84}, the collection $\mathcal M$ of all $\mu^*$-measurable sets is a $\sigma$-algebra, and the restriction of $\mu^*$ to $\mathcal M$ 
is a complete measure. 

It remains to show that all open subsets of $G^{(0)}$ belong to $\mathcal M$. Let $V$ be an open subset of $G^{(0)}$ and let $E\in \mathcal P (G^{(0)})$.
We only need to check that
$$\mu^*(E) \ge \mu^*(E\cap V) + \mu^* (E\cap V^c).$$
We can obviously assume that $\mu^*(E) <\infty $. In particular this implies that $E\subseteq Y$. 

Given $\varepsilon >0$, we can take a sequence $\{ V_j \}$ of open subsets of $G^{(0)}$ such that $E\subseteq \bigcup_{j=1}^{\infty} V_j$ and
$\sum_{j=1}^{\infty} \rho (V_j) - \mu^* (E) < \varepsilon $. In particular, we get that $V_j\subseteq Y$ for each $j$. 

Using Lemma \ref{lem:equalitymurho} and the fact that $\mu^*$ is an outer measure, we have 
$$\rho (\bigcup _{j=1}^{\infty} V_j) = \mu^* (\bigcup _{j=1}^{\infty} V_j) \le \sum_{j=1}^{\infty} \mu^*(V_j)= \sum _{j=1}^{\infty} \rho (V_j),$$ 
so we can indeed replace the sequence $\{V_j\}$
by just one term, namely $W:= \bigcup _{j=1}^{\infty} V_j$.

We can now take a compact open subset $K$ of $W$ such that
$$ \mu^* (W) = \rho (W) < \rho (K) +\varepsilon .$$
Now observe that $E\cap V\subseteq W\cap V$, and since $W\cap V$ is open, we get
$$\mu^*(E\cap V) \le \mu^* (V\cap W ) = \rho (V\cap W).$$
We can choose a compact open subset $K'$ of $Y$ such that $K'\subseteq V\cap W$ and 
$$ \rho (V\cap W) \le \rho (K') +\varepsilon .$$
Consider now $K'':= K\cap K'$, which is a compact open subset of $Y$ such that $K''\subseteq V\cap W$.
Note that 
\begin{align*}
\rho (K'') & = \rho (K\cap K') = \rho (K') - \rho (K'\setminus K\cap K')\\
& \ge \rho (K') -\rho (W\setminus K) = \rho (K') -(\rho (W)-\rho (K))\\
& > \rho (K') -\varepsilon,  
\end{align*}
where we have used Lemma \ref{lem:KsubsetofV} for the third equality and the inequality $\rho (W)-\rho (K)<\varepsilon$ for the last inequality. 

Now observe that $E\cap V^c \subseteq W\setminus K''$ because $K''\subseteq V$. Therefore
we have 
$$\mu^* (E\cap V^c) \le \mu^*(W\setminus K'') = \rho (W) - \rho (K'') < \rho (W) -\rho (K') +\varepsilon ,$$
and thus
$$\mu^*(E\cap V) +\mu^*(E\cap V^c) < \rho (K') +\varepsilon + \rho (W) - \rho (K') +\varepsilon = \rho (W) + 2\varepsilon \le \mu^*(E) + 3\varepsilon.$$
This shows the result.

Thus, we can obtain the following:
 
 \begin{prop}
  \label{prop:extmeasUG}
  Let $\mu \in M(G|_U)$, where $U$ is a compact open subset of $G^{(0)}$. Then there exists $\mu'\in UM(G)$ such that $\mu'(T)= \mu (T)$ for each Borel subset $T$ of $U$.  
   \end{prop}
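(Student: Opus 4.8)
The plan is to finish the proof by setting $\mu' := \mu^*|_{\mathcal M}$ and then restricting to the Borel $\sigma$-algebra. By Caratheodory's theorem $\mu^*|_{\mathcal M}$ is a complete measure, and since we have just checked that every open subset of $G^{(0)}$ lies in $\mathcal M$, the whole Borel $\sigma$-algebra is contained in $\mathcal M$. Hence $\mu'$ is a well-defined positive Borel measure on $G^{(0)}$ (taking the value $+\infty$ outside the $G$-invariant set $Y=r(GU)$), so it will land in $UM(G)$ as soon as invariance is verified; note that $UM(G)$ asks only for a positive \emph{Borel} invariant measure, so no regularity of $\mu'$ is needed. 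It then remains to check two things: that $\mu'$ restricts to $\mu$ on $U$, and that $\mu'$ is $G$-invariant.

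For the agreement on $U$, first note $U\subseteq Y$ and $\mu'(U)=\mu^*(U)=\rho(U)=\mu(U)=1$ by Lemma~\ref{lem:equalitymurho}, so $\mu'|_U$ is a finite Borel measure; the same lemma gives $\mu'(V)=\mu(V)$ for every open $V\subseteq U$. Since the open subsets of $U$ form a $\pi$-system generating the Borel $\sigma$-algebra of $U$, and $\mu'|_U$ and $\mu$ are finite measures of equal total mass, Dynkin's $\pi$-$\lambda$ theorem forces $\mu'(T)=\mu(T)$ for every Borel $T\subseteq U$. (This argument needs neither regularity nor second countability.)

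For invariance I would first treat a compact open bisection $V$. If $s(V)\nsubseteq Y$ then, by $G$-invariance of $Y$, also $r(V)\nsubseteq Y$, and both $\mu'(s(V))$ and $\mu'(r(V))$ equal $+\infty$. If $s(V)\subseteq Y$ (whence $r(V)\subseteq Y$), write $s(V)=\bigsqcup_{i=1}^n r(W_i)$ with $s(W_i)\subseteq U$, so that $\rho(s(V))=\sum_i\mu(s(W_i))$ by definition. The products $VW_i$ are again compact open bisections with $s(VW_i)=s(W_i)$, and $r(V)=\bigsqcup_i r(VW_i)$; therefore $\rho(r(V))=\sum_i\mu(s(VW_i))=\sum_i\mu(s(W_i))=\rho(s(V))$, giving $\mu'(s(V))=\mu'(r(V))$. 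To pass to an arbitrary open bisection $V$ with $s(V)\subseteq Y$, I would invoke inner regularity by compact open sets on $Y$, namely $\mu'(s(V))=\rho(s(V))=\sup\{\rho(K):K\subseteq s(V)\text{ compact open}\}$, which is exactly the definition of $\rho$ on open sets together with Lemma~\ref{lem:equalitymurho}. For each such $K$ the set $V\cap s^{-1}(K)$ is a compact open bisection with range $\theta_V(K)\subseteq r(V)$, so the compact-bisection case yields $\rho(K)=\mu'(\theta_V(K))\le\mu'(r(V))$; taking the supremum gives $\mu'(s(V))\le\mu'(r(V))$, and applying the same reasoning to $V^{-1}$ gives the reverse inequality. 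The remaining case $s(V)\nsubseteq Y$ is again immediate from the $+\infty$ convention and invariance of $Y$.

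The one genuinely delicate point is the invariance step in the possibly non-second-countable setting: one cannot decompose a general open bisection into a \emph{countable} disjoint union of compact open bisections as in Lemma~\ref{lem:RS-forgroupoids}. The device that circumvents this obstacle is to route the verification through inner regularity by compact open sets — which is available precisely because $\mu^*=\rho$ on open subsets of the $\sigma$-finite part $Y$ — thereby reducing everything to the explicit $\rho$-computation for compact open bisections.
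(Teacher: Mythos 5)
Your proof is correct, and it follows the same basic route as the paper: both take $\mu'$ to be the restriction of the Caratheodory measure $\mu^*|_{\mathcal M}$ to the Borel $\sigma$-algebra, which is legitimate once all open subsets of $G^{(0)}$ are known to be $\mu^*$-measurable. The differences lie in the two verification steps, where the paper is very terse. For the agreement on $U$, the paper simply invokes regularity of $\mu$ (outer regularity of $\mu$ matches the infimum-over-open-covers definition of $\mu^*$), whereas you use Dynkin's $\pi$--$\lambda$ theorem for two finite measures of equal total mass agreeing on the $\pi$-system of open subsets of $U$; both work, and since $\mu\in M(G|_U)$ is regular by definition neither hypothesis is a real cost --- your variant just shows that regularity is not actually needed at this step. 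For invariance, the paper dismisses it as ``clear from the definition of $\rho$,'' and your argument supplies precisely the missing justification: the direct computation for compact open bisections (the same manipulation of the products $VW_i$ as in the proof of Lemma \ref{lem:Smorita-inv}), followed by the passage to arbitrary open bisections via the supremum-over-compact-open-subsets definition of $\rho$, together with the trivial $+\infty$ case when $s(V)\nsubseteq Y$. Your closing remark is also well taken: since $G$ is not assumed second countable, one cannot decompose an open bisection into countably many compact open ones as in Lemma \ref{lem:RS-forgroupoids}, and the inner-regularity device is exactly what substitutes for that; this is the very reason the appendix exists, as Theorem \ref{Thm:B} is stated without second countability.
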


 \begin{proof}
  By the above, we obtain a positive Borel measure $\mu'$ on $G^{(0)}$ such that $\mu'(V)= \mu (V)$ for each open subset $V$ of $G^{(0)}$.
  The measure $\mu'$ is just the restriction to the $\sigma$-algebra of Borel subsets of $G^{(0)}$ of the outer measure $\mu^*$ considered above. Since $\mu $ is a regular Borel measure we obtain that 
  $\mu'$ extends $\mu$. Finally if is clear from the definition of $\rho$ above that $\mu'$ is an invariant measure.  
   \end{proof}

	\ \newline
	{\bf Acknowledgments}. The second author would like to thank Rufus Willett for enlightening discussions and valuable suggestions on the content of section \ref{Section:CoarseGeometry} of the present article.
	Moreover, we are grateful to the anonymous referee for a very careful read and numerous suggestions for improvements.

\bibliographystyle{plain}

\begin{thebibliography}{10}
	
	
	
	\bibitem{2002.12221}
	Pere Ara, Christian Bönicke, Joan Bosa, and Kang Li.
	\newblock Strict comparison for $C^*$-algebras arising from almost finite groupoids, 2020.
	\newblock {\em Banach J. Math. Anal.}, 14:1692--1710, 2020.
	
	\bibitem{ABPS19}
	Pere Ara, Joan Bosa, Enrique Pardo, and Aidan Sims.
	\newblock The groupoids of adaptable separated graphs and their type semigroup,
	2019.
	\newblock arXiv:1904.05197, to appear in Int. Math. Res. Not.
	
		\bibitem{AE14}
	Pere Ara and Ruy Exel.
	\newblock Dynamical systems associated to separated graphs, graph algebras, and
	paradoxical decompositions.
	\newblock {\em Adv. Math.}, 252:748--804, 2014.
	
	
\bibitem{ALLW1}
	Pere Ara, Kang Li, Fernando Lled\'{o}, and Jianchao Wu.
	\newblock Amenability and uniform {R}oe algebras.
	\newblock {\em J. Math. Anal. Appl.}, 459(2):686--716, 2018.
	
	\bibitem{ALLW2}
	Pere Ara, Kang Li, Fernando Lled\'{o}, and Jianchao Wu.
	\newblock Amenability of coarse spaces and {$\mathbb{K}$}-algebras.
	\newblock {\em Bull. Math. Sci.}, 8(2):257--306, 2018.
	
\bibitem{AP96}
	Pere Ara and Enrique Pardo.
	\newblock Refinement monoids with weak comparability and applications to
	regular rings and {$C^*$}-algebras.
	\newblock {\em Proc. Amer. Math. Soc.}, 124(3):715--720, 1996.	
	

	
	
	\bibitem{AGPS}
	Gonzalo~Aranda~Pino, Kenneth~R. Goodearl, Francesc~Perera, and Mercedes~Siles~Molina.
	\newblock Non-simple purely infinite rings.
	\newblock {\em Amer. J. Math.}, 132(3):563--610, 2010.
	
	\bibitem{BW92}
	Jonathan Block and Shmuel Weinberger.
	\newblock Aperiodic tilings, positive scalar curvature and amenability of
	spaces.
	\newblock {\em J. Amer. Math. Soc.}, 5(4):907--918, 1992.
	
	\bibitem{boenicke_li_2018}
	Christian B\"{o}nicke and Kang Li.
	\newblock Ideal structure and pure infiniteness of ample groupoid
	{$C^\ast$}-algebras.
	\newblock {\em Ergodic Theory Dynam. Systems}, 40(1):34--63, 2020.
	
	\bibitem{MR3601549}
	Toke~Meier Carlsen, Efren Ruiz, and Aidan Sims.
	\newblock Equivalence and stable isomorphism of groupoids, and
	diagonal-preserving stable isomorphisms of graph {$C^*$}-algebras and
	{L}eavitt path algebras.
	\newblock {\em Proc. Amer. Math. Soc.}, 145(4):1581--1592, 2017.
	
	\bibitem{MR2322178}
	Christopher Deninger and Klaus Schmidt.
	\newblock Expansive algebraic actions of discrete residually finite amenable
	groups and their entropy.
	\newblock {\em Ergodic Theory Dynam. Systems}, 27(3):769--786, 2007.
	
	\bibitem{DHZ}
	Tomasz Downarowicz, Dawid Huczek, and Guohua Zhang.
	\newblock Tilings of amenable groups.
	\newblock {\em J. Reine Angew. Math.}, 747:277--298, 2019.
	
	\bibitem{1712.05129}
	Tomasz Downarowicz and Guohua Zhang.
	\newblock The comparison property of amenable groups, arXiv:1712.05129, 2017.
	
	\bibitem{1812.07511}
	G\'abor Elek.
	\newblock Qualitative graph limit theory. Cantor dynamical systems and
	constant-time distributed algorithms, arXiv:1812.07511, 2018.
	
	\bibitem{FKPS18}
	Carla Farsi, Alex Kumjian, David Pask, and Aidan Sims.
	\newblock Ample groupoids: equivalence, homology, and {M}atui's {HK}
	conjecture, 2018.
	\newblock {\em M\"{u}nster J. Math.}, 12(2):411--451, 2019.
	
	\bibitem{MR3197659}
	Martin Finn-Sell and Nick Wright.
	\newblock Spaces of graphs, boundary groupoids and the coarse {B}aum-{C}onnes
	conjecture.
	\newblock {\em Adv. Math.}, 259:306--338, 2014.
	
	\bibitem{Foll84}
	Gerald~B. Folland.
	\newblock {\em Real analysis}.
	\newblock Pure and Applied Mathematics (New York). John Wiley \& Sons, Inc.,
	New York, 1984.
	\newblock Modern techniques and their applications, A Wiley-Interscience
	Publication.
	
	\bibitem{MR2431020}
	Thanos Gentimis.
	\newblock Asymptotic dimension of finitely presented groups.
	\newblock {\em Proc. Amer. Math. Soc.}, 136(12):4103--4110, 2008.
	
	\bibitem{GKPT}
	Thierry Giordano, David Kerr, N. Christopher Phillips, and Andrew Toms.
	\newblock Crossed products of {$C^*$}-algebras, topological dynamics, and classification.
\newblock {\em Advanced Courses in Mathematics. CRM Barcelona}, 2018.	

	\bibitem{GWY17}
	Erik Guentner, Rufus Willett, and Guoliang Yu.
	\newblock Dynamic asymptotic dimension: relation to dynamics, topology, coarse
	geometry, and {$C^*$}-algebras.
	\newblock {\em Math. Ann.}, 367(1-2):785--829, 2017.
	
	\bibitem{1710.00393}
	David Kerr.
	\newblock Dimension, comparison, and almost finiteness.
	\newblock {\em J. Eur. Math. Soc.}, 22(11): 3697--3745, 2020.	
	
	
	
	\bibitem{1807.04326}
	David Kerr and G\'abor Szab\'o.
	\newblock Almost finiteness and the small boundary property.
	\newblock {\em Commun. Math. Phys.}, 374(1):1--31, 2020.
	
	\bibitem{LV19}
	Mark~V. Lawson and Alina Vdovina.
	\newblock The universal {B}oolean inverse semigroup presented by the abstract
	{C}untz-{K}rieger relations.
	\newblock {\em J. Noncommut. Geom.} 15(1): 279--304, 2021.

	
		\bibitem{MR3694599}
	Xin Li.
	\newblock Partial transformation groupoids attached to graphs and semigroups.
	\newblock {\em Int. Math. Res. Not. IMRN}, (17):5233--5259, 2017.
	
	\bibitem{LW18}
	Kang Li and Rufus Willett.
	\newblock Low-dimensional properties of uniform {R}oe algebras.
	\newblock {\em J. Lond. Math. Soc. (2)}, 97(1):98--124, 2018.
	

	
	\bibitem{Ma18}
	Xin Ma.
	\newblock Comparison and pure infiniteness of crossed products.
	\newblock {\em Trans. Amer. Math. Soc.}, 372(10):7497--7520, 2019.
	
	\bibitem{Ma21}
		Xin Ma.
		\newblock Purely Infinite Locally Compact Hausdorff étale Groupoids and Their C*-algebras.
		\newblock {\em Int. Math. Res. Not. IMRN}, to appear, 2021.
	
	\bibitem{MW}
	Xin Ma and Jianchao Wu.
		\newblock Almost elementariness and fiberwise amenability for étale groupoids.
		\newblock {\em arXiv:2011.01182}, 2020.
	
	\bibitem{MR2876963}
	Hiroki Matui.
	\newblock Homology and topological full groups of \'etale groupoids on totally
	disconnected spaces.
	\newblock {\em Proc. Lond. Math. Soc. (3)}, 104(1):27--56, 2012.
	
	\bibitem{MR2806681}
	Eduard Ortega, Francesc Perera, and Mikael R{\o}rdam.
	\newblock The corona factorization property and refinement monoids.
	\newblock {\em Trans. Amer. Math. Soc.}, 363(9):4505--4525, 2011.
	
	\bibitem{OPR12}
	Eduard Ortega, Francesc Perera, and Mikael R{\o}rdam.
	\newblock The corona factorization property, stability, and the {C}untz
	semigroup of a {$C^\ast$}-algebra.
	\newblock {\em Int. Math. Res. Not. IMRN}, (1):34--66, 2012.
	
	\bibitem{rainone_sims}
	Timothy Rainone and Aidan Sims.
	\newblock A dichotomy for groupoid {$C^{\ast}$}-algebras.
	\newblock {\em Ergodic Theory Dynam. Systems}, 40(2):521--563, 2020.
	
	\bibitem{Re80}
	Jean Renault.
	\newblock {\em A groupoid approach to {$C^{\ast} $}-algebras}, volume 793 of
	{\em Lecture Notes in Mathematics}.
	\newblock Springer, Berlin, 1980.
	
	\bibitem{MR2106263}
	Mikael R{\o}rdam.
	\newblock The stable and the real rank of {$\mathcal{Z}$}-absorbing
	{$C^*$}-algebras.
	\newblock {\em Internat. J. Math.}, 15(10):1065--1084, 2004.
	
	\bibitem{RS12}
	Mikael R{\o}rdam and Adam Sierakowski.
	\newblock Purely infinite {$C^*$}-algebras arising from crossed products.
	\newblock {\em Ergodic Theory Dynam. Systems}, 32(1):273--293, 2012.
	
	\bibitem{Rud87}
	Walter Rudin.
	\newblock {\em Real and complex analysis}.
	\newblock McGraw-Hill Book Co., New York, third edition, 1987.
	
	\bibitem{zbMATH01761014}
	George~{Skandalis}, Jean-Louis Tu, and Guoliang~{Yu}.
	\newblock {The coarse Baum-Connes conjecture and groupoids.}
	\newblock {\em {Topology}}, 41(4):807--834, 2002.
	
		\bibitem{MR3557774}
	J\'an \v{S}pakula and Rufus Willett.
	\newblock A metric approach to limit operators.
	\newblock {\em Trans. Amer. Math. Soc.}, 369(1):263--308, 2017.
	
	\bibitem{Suzuki}
	Yuhei Suzuki.
	\newblock Almost finiteness for general \'etale groupoids and its applications
	to stable rank of crossed products.
	\newblock {\em Int. Math. Res. Not.}, 19:6007--6041, 2020. 
	
	

	\bibitem{szabo_wu_zacharias_2017}
	G\'{a}bor Szab\'{o}, Jianchao Wu, and Joachim Zacharias.
	\newblock Rokhlin dimension for actions of residually finite groups.
	\newblock {\em Ergodic Theory Dynam. Systems}, 39(8):2248--2304, 2019.
	

	

	
		\bibitem{MR1871980}
	Jean-Louis Tu.
	\newblock Remarks on {Y}u's ``property {A}'' for discrete metric spaces and
	groups.
	\newblock {\em Bull. Soc. Math. France}, 129(1):115--139, 2001.
	
	\bibitem{Wagon}
	Stan Wagon.
	\newblock {\em The Banach-Tarski paradox}.
	\newblock {Cambridge University Press, Cambridge, first edition, 1985.}
	
	\bibitem{W17}
	Friedrich Wehrung.
	\newblock {\em Refinement monoids, equidecomposability types, and {B}oolean
		inverse semigroups}, volume 2188 of {\em Lecture Notes in Mathematics}.
	\newblock Springer, Cham, 2017.
	
	\bibitem{MR1819193}
	Benjamin Weiss.
	\newblock Monotileable amenable groups.
	\newblock In {\em Topology, ergodic theory, real algebraic geometry}, volume
	202 of {\em Amer. Math. Soc. Transl. Ser. 2}, pages 257--262. Amer. Math.
	Soc., Providence, RI, 2001.
	
	\bibitem{MR2562146}
	Rufus Willett.
	\newblock Some notes on property {A}.
	\newblock In {\em Limits of graphs in group theory and computer science}, pages
	191--281. EPFL Press, Lausanne, 2009.
	
\end{thebibliography}

\end{document}